\theoremstyle{plain}
\newtheorem{main}{Theorem}
\newtheorem{theorem}{Theorem}[section]
\newtheorem{lemma}[theorem]{Lemma}
\newtheorem{proposition}[theorem]{Proposition}
\newtheorem{corollary}[theorem]{Corollary}
\theoremstyle{remark}
\newtheorem{remark}[theorem]{Remark}
\newtheorem{definition}[theorem]{Definition}
           \def\ea{\end{array}}
          \def\ec{\end{center}}
     \def\ed{\end{description}}
        \def\ee{\end{equation}}
       \def\eea{\end{eqnarray}}
     \def\eeaa{\end{eqnarray*}}
 \def\et{\end{thebibliography}}
       \def\nt{\noindent}
\def\supp{\operatorname{supp}}
\def\cA{{\mathcal A}}
\def\cD{{\mathcal D}}
\def\cT{{\mathcal T}}
\def\cH{{\mathcal H}}
\def\cF{{\mathcal F}}
\def\cM{{\mathcal M}}
\def\vep{\varepsilon}
\def\TT{{\mathbb T}}
\def\RR{{\mathbb R}}
\title[Decay of correlations]{Decay of correlations for maximal measure
of maps derived from Anosov}
\author{Fan Yang and Jiagang Yang}
\date{\today}
\thanks{F.Y. was partially supported by CAPES; J.Y. was partially supported by CNPq, FAPERJ, and PRONEX.}
\address{Departamento de Geometria, Instituto de Matem\'atica e Estat\'\i stica, Universidade Federal Fluminense, Niter\'oi, Brazil}
\email{yangjg\@@impa.br}
\address{Instituto de Matem\'atica, Universidade Federal do Rio de Janeiro
, Rio de Janeiro, Brazil}
\email{fyang@im.ufrj.br}
\begin{document}

\begin{abstract}
It was proven by Ures that $C^1$ diffeomorphism of $\TT^3$ that is derived from Anosov admits a
unique maximal measure. Here we show that the maximal measure has exponential decay of
correlations for H\"{o}lder observables.

\end{abstract}

\maketitle

\tableofcontents

\setcounter{tocdepth}{1} \tableofcontents

\section{Introduction}
By the early 1970¡¯s, Brin, Pesin~\cite{BP74} and Pugh, Shub~\cite{PSh72} began the study of partially
hyperbolic diffeomorphisms, as an extension of the classical class of Anosov diffeomorphisms.

A diffeomorphism $f$ on a compact manifold $M$ is
\emph{partially hyperbolic} if there is a $df$-invariant splitting of the tangent
bundle $TM = E^s\oplus E^c \oplus E^u$, such that all unit vectors
$v^i\in E^i_x\setminus \{0\}$ ($i= s, c, u$) with $x\in M$
for some suitable Riemannian metric satisfies
$$e^{\lambda_1(x)}\leq |df\mid_{E^s_x}(v^s)|\leq e^{\lambda_2(x)},$$
$$e^{\lambda_3(x)}\leq |df\mid_{E^c_x}(v^c)|\leq e^{\lambda_4(x)},$$
$$e^{\lambda_5(x)}\leq |df\mid_{E^u_x}(v^u)|\leq e^{\lambda_6(x)},$$
where $\lambda_1(x)\leq \lambda_2(x)<\lambda_3(x)\leq\lambda_4(x)<\lambda_5(x)\leq\lambda_6(x)$
and $\lambda_2(x)<0$, $\lambda_5(x)>0$.

We are interested in three dimensional \emph{derived from Anosov diffeomorphisms}, $\cD(A)$, which are
the partially hyperbolic diffeomorphisms in the same isotopy class with some linear Anosov
diffeomorphism $A$. This definition is a generalization of the classical construction of partially hyperbolic, robustly transitive diffeomorphisms by Ma\~{n}\'{e}~\cite{M}. Let us mention that, although in the same isotopy class, the
dynamics of a derived from Anosov diffeomorphism and the linear Anosov one can be quite different. For example,
the center exponent of a volume preserving derived from Anosov diffeomorphism may have different sign with
the center exponent of the linear Anosov diffeomorphism (~\cite{PT}).

On the other hand, in the last decade, people began to realize that, the derived from Anosov
diffeomorphism does inherit topological hyperbolicity from its isotopy class
(see for example,~\cite{BBI09,Ham13,Pot15,HaP}). This weak hyperbolicity was used
in \cite{HU,Ure12,VY} to deduce measure-theoretical properties for derived from Anosov diffeomorphisms.

By the variation principle, for any invariant probability measure of a diffeomorphism, the metric entropy
is always bounded by the topological entropy. An invariant
probability is called \emph{maximal} if the corresponding metric entropy coincides with the
topological entropy of this diffeomorphism. In other words, the maximal measures are the
measures which are most complicated. It is a well-known fact that every transitive Anosov
diffeomorphism admits a unique maximal measure, and this
maximal measure has exponential decay of correlations for H\"{o}lder continuous observables (see for instance \cite{B}).
It was observed by Ures in \cite{Ure12} \footnote{\label{f.pointwise}In
\cite{Ure12}, the partially hyperbolic diffeomorphism is supposed to be uniform absolute, that is, the parameters
$\lambda_i(x)$, $i=1,\dots, 6$ in the definition of partially hyperbolic diffeomorphism do not depend on $x$.}  and Viana-Yang \cite{VY}  that every diffeomorphism $f\in\cD(A)$ also admits a unique maximal measure. Denote this maximal measure of $f$ by $\nu_f$, in this paper we
are going to prove the following:

\begin{main}\label{main}\footnote{
In \cite{CN}, a different kind of derive fromd Anosov diffeomorphism was considered, where
they assume the existence of Markov partition, some 'good' component where
the center direction is uniformly expanding, and on the 'bad' components
the center direction does not contract too much, that is, the small norm is bounded from below by a
value close to one (condition (5)). With $C^{1+}$ regularity assumption, they proved
similar results. We thank Paulo Varandas for the discussion on this work.}
Suppose $A$ is a three dimensional linear Anosov diffeomorphism over $\TT^3$. Then
for any $C^1$ diffeomorphism $f\in \cD(A)$, its maximal measure $\nu_f$ has exponential decay of correlations for
H\"{o}lder continuous observables: for $0 < \gamma < 1$ there exists some constants $0<\tau<1$ such that for
all $\phi,\psi\in C^\gamma(M)$ there exists $K(\phi,\psi)>0$ satisfying
$$
\mid \int (\phi\circ f^n)\psi d\nu_f - \int \phi d\nu_f \int \psi d\nu_f\mid \leq K(\phi,\psi) \tau^n, \text{ for every }n\geq 1.
$$

\end{main}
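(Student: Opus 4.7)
The plan is to realize the maximal measure $\nu_f$ as a measure with exponentially fast returns on a Gibbs--Markov--Young tower, and then to invoke L.-S.~Young's theorem to deduce exponential decay of correlations for H\"older observables.

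First I would exploit the semiconjugacy $h\colon\TT^3\to\TT^3$ with the linear model $A$, satisfying $h\circ f=A\circ h$. By the work of Ures, Hammerlindl, and Potrie, each fiber $h^{-1}(y)$ is either a single point or a compact arc contained in a center leaf of $f$, and $h_\ast\nu_f=\mu_A$, where $\mu_A$ is Haar measure on $\TT^3$ (the unique MME of $A$). This identification transports a Markov partition of $(\TT^3,A)$ to a measurable Markov structure on $(\TT^3,f,\nu_f)$ up to the $\mu_A$-null set of collapsed center arcs, and, more importantly, allows one to compare $\nu_f$-statistics directly with the well-understood $\mu_A$-statistics of $A$.

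Next, I would construct a hyperbolic product rectangle $\Lambda\subset\TT^3$ of positive $\nu_f$-measure, defined as the intersection of local strong-unstable leaves of $f$ with local center-stable disks, obtained as the $h$-preimage of a sufficiently small Markov rectangle $\Lambda_A$ of $A$. Letting $R\colon\Lambda\to\NN$ be the first-return time to $\Lambda$ under $f$, the objective is to verify Young's axioms for the induced map $f^R\colon\Lambda\to\Lambda$: uniform unstable expansion, stable contraction in the hyperbolic product sense, bounded distortion of the unstable Jacobian along returns, and an exponential tail estimate $\nu_f(R>n)\lesssim\theta^n$. Once these axioms hold, Young's theorem yields exponential decay of correlations for H\"older observables against $\nu_f$.

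The exponential tail is the technical heart of the argument. It should follow from the exponential mixing of $A$ on its Markov partition, transported through $h$: a point $x\in\Lambda$ returns to $\Lambda$ as soon as $A^n h(x)$ hits $\Lambda_A$, because $h$-fibers are uniformly short arcs so only a bounded correction is required. The main obstacle is the $C^1$ (rather than $C^{1+\alpha}$) regularity of $f$, which blocks the usual bounded-distortion control of the unstable Jacobian along returns that Young-tower constructions rely on. I would circumvent this by controlling the unstable Jacobian of $f^R$ through the factor $A^R$ via $h$, exploiting the H\"older regularity of the smooth linear side rather than attempting a direct $C^{1+\alpha}$-style distortion estimate for $f$; the remaining verification of Young's axioms should then be routine, and the theorem follows.
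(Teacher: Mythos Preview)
Your framework is reasonable and shares the paper's key insight: use the semiconjugacy $h$ to transport the well-behaved linear structure to $f$. In particular, your idea of controlling the unstable Jacobian ``through the factor $A^R$ via $h$'' is exactly what the paper does --- it defines reference measures $\nu^u_x$ on unstable plaques as $h$-pullbacks of the Lebesgue conditionals of $\omega$, and shows (Lemma~\ref{l.Jac}) that the Jacobian of $f$ with respect to these is \emph{piecewise constant}. This is what replaces bounded distortion in the $C^1$ setting, and you have essentially spotted it.

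However, there is a genuine gap in your exponential tail argument. Your claim that ``$x\in\Lambda$ returns to $\Lambda$ as soon as $A^n h(x)$ hits $\Lambda_A$, because $h$-fibers are uniformly short arcs'' is not enough. For $f^R$ to satisfy Young's hyperbolic product axioms, the return must occur at a time when the \emph{center} direction has genuinely contracted --- but $f$ is only partially hyperbolic, and the center contraction is non-uniform. Returning in the $A$-picture says nothing about whether $\|df^n|_{E^c}\|$ is small at that moment. The paper spends Sections~\ref{s.negative}--\ref{sec_deviation} on precisely this point: first extending Ures' negative-center-exponent result to $C^1$ (Theorem~\ref{t.negativecenter}, via $C^2$ approximation and entropy upper-semicontinuity), then proving a ``mostly contracting'' estimate \emph{with respect to the reference measures $\nu^u_x$} on every unstable plaque (Propositions~\ref{p.mc} and~\ref{p.hypobolicity}), and finally a large-deviations bound (Corollary~\ref{c.largedeviation}) showing that the $\nu^u_x$-measure of points where $\|df^n|_{E^c}\|$ fails to decay is exponentially small. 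This is what actually drives the exponential tail, and it is absent from your sketch.

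A secondary difference: the paper does not build a Young tower but uses Dolgopyat's coupling method directly on the space of measures $E(R)$ absolutely continuous with respect to $\nu^u_x$. The two approaches are cousins, but coupling is better suited here because the center-stable holonomy preserves the reference measures exactly (Proposition~\ref{p.productstructure}), which makes the matching step clean without any Pesin-theoretic absolute continuity --- another place where $C^{1+\alpha}$ would normally be needed and which your outline does not address.
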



We also obtain a large deviation estimate for $C^0$ functions. For any function $\phi$, let $S_n(\phi) = \sum_{k=0}^{n-1}\phi\circ f^k$ be the ergodic sum. We have the following:
\begin{main}\label{C0_deviation}
Suppose $A$ is a three dimensional linear Anosov diffeomorphism over $\TT^3$ with negative center exponent. For every $\phi \in C^0(M)$ with $\nu_f(\phi)=0$ and every $\epsilon>0$ there exists constants $C_\epsilon, c_\epsilon>0$ such that
$$
\nu_f(|S_n(\phi)|>\epsilon n ) \le C_\epsilon e^{-c_\epsilon n}.
$$
\end{main}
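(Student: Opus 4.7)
The plan is to deduce the large deviation estimate from a corresponding bound for Hölder observables, which in turn will follow from the same tower-type statistical structure underlying the exponential decay of correlations in Theorem~\ref{main}. I would first reduce the $C^0$ statement to the Hölder case by mollification: fix $\gamma\in(0,1)$ and, using convolution with a smooth bump of scale depending on $\epsilon$, approximate $\phi$ by some $\tilde{\phi}\in C^\gamma(M)$ with $\|\phi-\tilde{\phi}\|_\infty<\epsilon/3$. Since $\nu_f(\phi)=0$, this forces $|\nu_f(\tilde{\phi})|<\epsilon/3$ and, pointwise, $|S_n(\phi)-S_n(\tilde{\phi})|\le n\epsilon/3$, so
\[
\{|S_n(\phi)|>\epsilon n\}\subseteq\{|S_n(\tilde{\phi}-\nu_f(\tilde{\phi}))|>\epsilon n/3\}.
\]
It therefore suffices to prove an exponential large deviation bound for the zero-$\nu_f$-mean Hölder observable $\psi:=\tilde{\phi}-\nu_f(\tilde{\phi})$.

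Next I would establish that Hölder large deviation estimate. The proof of Theorem~\ref{main} proceeds by constructing a Young tower with exponentially decaying return-time tails whose quotient dynamics realizes $(f,\nu_f)$. For such towers the exponential large deviation principle for Hölder observables is classical and can be obtained in two essentially equivalent ways: either from the spectral gap of the induced transfer operator on an appropriate Hölder Banach space, giving exponential control of moment generating functions followed by a Chernoff bound, or as a direct consequence of the Rey-Bellet--Young large deviation theorem for systems modeled by Young towers with exponential tails. Either route yields, for every $\delta>0$ and every Hölder $\psi$ with $\int\psi\,d\nu_f=0$, a bound
\[
\nu_f\bigl(|S_n\psi|>\delta n\bigr)\le C(\psi)\,e^{-c(\delta)n},
\]
with $c(\delta)>0$ independent of $\psi$ and $C(\psi)$ depending only on the Hölder norm of $\psi$. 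Plugging in $\delta=\epsilon/3$ and the $\psi$ produced in the previous paragraph, and absorbing the $\epsilon$-dependent Hölder norm of $\tilde{\phi}$ into the multiplicative constant, gives Theorem~\ref{C0_deviation}.

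I expect the main technical point to be verifying that the Young tower built for Theorem~\ref{main} really satisfies the hypotheses of the standard Hölder large deviation theorem, in particular that the induced Gibbs-Markov map has a genuine spectral gap on Hölder functions and that the return-time tails are exponential (not merely summable). The hypothesis that $A$ has negative center exponent enters precisely here: by the work of Ures it forces the center Lyapunov exponent of $\nu_f$ to be negative, which makes the $f$-unstable direction one-dimensional and uniformly expansive on a set of full $\nu_f$-measure, and it is this property that lets the base of the Young tower be built from unstable plaques in the standard way. Once the tower structure is in hand, the passage from Hölder to $C^0$ described above is entirely routine.
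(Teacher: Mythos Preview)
Your proposal rests on a structural assumption about the paper that is not correct, and this creates both a factual gap and a logical circularity.

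First, the paper does \emph{not} prove Theorem~\ref{main} by building a Young tower. It uses Dolgopyat's coupling method directly (Sections~\ref{s.argument}--\ref{s.coupling}), with reference measures $\nu^u_x$ on unstable plaques whose Jacobian under $f$ is piecewise constant (Lemma~\ref{l.Jac}). There is no induced Gibbs--Markov map, and the Rey-Bellet--Young machinery is never invoked. In the $C^1$ setting treated here the usual bounded-distortion ingredient needed for a standard Young tower is simply unavailable; the paper replaces it by the special algebraic features of the reference measures. So the sentence ``The proof of Theorem~\ref{main} proceeds by constructing a Young tower with exponentially decaying return-time tails'' is not something you can appeal to.

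Second, even if you tried to extract a large-deviation statement from the coupling argument, you would run into a circularity: the coupling construction in Section~\ref{s.coupling} \emph{uses} the large-deviation bound of Corollary~\ref{c.largedeviation} (applied to $\phi=\log|df|_{E^c}|$) as an input---see \eqref{eq.exponentialtail}. In the paper's logical order, Theorem~\ref{C0_deviation} is proved first (Section~\ref{sec_deviation}) and then a special case of it feeds into the proof of Theorem~\ref{main}, not the other way around.

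The paper's actual argument is direct and works for $C^0$ observables without any mollification step. Using the Markov partition $\xi^u$ along unstable leaves and the piecewise-constant Jacobian, one shows (Lemma~\ref{l.largesteplog}) that $\int_{\xi^u(x)}S_n(\phi)\,d\nu^u_x\le -n\alpha/2+C$ whenever $\nu_f(\phi)<-\alpha<0$. A weak $C^0$ distortion estimate along backward unstable plaques (Proposition~\ref{p.distortion}, valid for $C^1$ maps because $f^{-n}$ contracts $\cF^u$ uniformly) upgrades this to a bound on $\max S_n(\phi)$ over each preimage plaque (Corollary~\ref{c.discrete}). One then controls the moment generating function for small $s$ (Lemma~\ref{l.notlog}) and propagates it inductively through the Markov structure (Corollary~\ref{c.largedeviation}), after which Chebyshev gives the exponential tail. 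Your mollification-to-H\"older reduction is unnecessary here, and the tower/spectral-gap route you sketch is not available in this $C^1$ setting without substantial additional work that the paper neither does nor needs.
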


Because in Theorem~\ref{main} we can always replace $f$ by its inverse, throughout this paper, we always assume $A$
has negative center exponent.

\subsection{Stretch of proof}

We will use an argument that is similar to \cite{D}, where the coupling
method was used to study the Gibbs-$u$ states of three dimensional $C^{1+}$ partially hyperbolic diffeomorphisms that are:
\begin{itemize}
\item[(i)] \emph{u-convergent \cite[Section 2]{D}},
\item[(ii)] \emph{mostly contracting along the center direction\cite[Section 3]{D}}.
\end{itemize}

First of all, for the maximal measure $\nu_f$, we need to define reference measures on {\bf every} unstable plaque
that are different from the Lebesgue measure, which is explained in Subsection~\ref{ss.local}. More precisely, we deduce this class of measures using the Franks' semiconjugation between the derived from Anosov diffeomorphism $f$ and the linear Anosov map $A$. In principle, this class of measures is only defined on a full measure subset, but these reference measures defined on the unstable plaques are invariant under the holonomy map induced by the center-stable foliation, hence we may define it on the whole manifold.

To apply the coupling argument, we need to show that the properties (i) and (ii) above are satisfied by our reference measures.
The main difficulty in our proof is that, for {\bf every} derived from Anosov diffeomorphisms in $\mathcal{D}(A)$, we need  to deduce the strong measure-theoretical hyperbolicity from the weak topological hyperbolicity (Proposition~\ref{p.coherent}).
In Section~\ref{s.negative}, we show the measures $\nu_f$ have uniform negative center exponent. Moreover, in Section~\ref{s.classification}
we show that $\nu_f$ is u-convergent; indeed we show that the support of $\nu_f$ is a $u$-minimal component (Subsection~\ref{ss.mc}) and that it has mostly contracting center (Subsection~\ref{ss.support}).

The proof of Theorem~\ref{C0_deviation} can be found in Section~\ref{sec_deviation}. The coupling argument for Theorem~\ref{main} is explained through Sections \ref{s.argument} to \ref{s.coupling}.

In the classic theory regarding decay of correlation, the $C^{1+}$ regularity is used for the following three reasons:
\begin{itemize}
\item the transfer operator preserves the space of H\"older functions;

\item the distortion estimation along the unstable plaques;

\item the absolutely continuity of center-stable holonomy assuming the center exponents are all negative (by Pesin theory \cite{P}).
\end{itemize}
In order to make the coupling argument work for $C^1$ diffeomorphisms, first we observe that the Jacobian of $f$ with respect to the reference measures on $u$-plagues is piecewise constant (Proposition~\ref{p.Econtracting}).
Moreover, instead of using the bounded distortion, we use a weak estimate -- Proposition~\ref{p.distortion}, which works for $C^1$ diffeomorphisms. Finally, because the holonomy map induced by the center stable foliation preserves
the reference measures, we may avoid using the Pesin theory.

\nt{{\bf Acknowledgements}. We are grateful to the anonymous referee for a careful revision
of the manuscript.}

\section{Preliminary}
We assume $A: \TT^3\to \TT^3$ to be a linear hyperbolic torus automorphism with eigenvalues $0<\kappa_1<\kappa_2<1<\kappa_3$ and eigenspaces $E_1, E_2, E_3$ respectively, and $f\in\cD(A)$ to be a derived from Anosov diffeomorphism.

We treat $A$ as a partially hyperbolic diffeomorphism with invariant subbundles
$E^{s}_A = E_1$, $E^c_A = E_2$ and $E^{u}_A = E_3$.
Denote by $\omega$ the maximal measure of $A$, it is well-known that, $\omega$ is indeed the
volume measure. We also denote by $\cF_A^{i}$ ($i=s,c,u,cs,cu$)
the linear foliation tangent to the subbundles $E^{s}_A,E^{c}_A,E^{u}_A,E^{cs}_A=E^{s}_A\oplus E^{c}_A,
E^{cu}_A=E^{c}_A\oplus E^{u}_A$ respectively.

\subsection{Dynamical coherence}

By Franks~\cite{Fra70}, there exists a continuous surjective map $h: \TT^3\to \TT^3$
which semiconjugates $f$ to $A$: $h\circ f=A\circ h$. The following proposition shows
that every three dimensional derived from Anosov diffeomorphism admits a weak form of topological hyperbolicity.

\begin{proposition}\label{p.coherent}
$f$ is dynamically coherent, the Franks' semiconjugation $h$ maps the center stable, center, center unstable and unstable leaves of $f$ into the corresponding leaves of $A$. Moreover,
\begin{itemize}
\item[(a)] restricted to each unstable leaf of $f$, $h$ is bijective;

\item[(b)] there is $K>0$ depending only on $f$, such that for every
$x\in \TT^3$, $h^{-1}(x)$,  called the fiber of the semiconjugacy is either a point, or a connected center segment of $f$ with length bounded
by $K$.

\item[(c)] the stable, center, unstable foliation of $f$ are quasi-isometric, that is, there exist $a,b>0$ such that
for any two points $\tilde{x},\tilde{y}$ belonging to the same lifted leaf $\tilde{\cF}^i$ ($i=s,c,u$) in the universal
covering space $\mathbb{R}^3$,
$$d_{\tilde{\cF}^i}(\tilde{x},\tilde{y})<ad(\tilde{x},\tilde{y})+b$$
where $d(\cdot,\cdot)$ is the Euclidean metric on $\RR^3$.

\item[(d)]  the fibers of the semiconjugacy are invariant under unstable holonomy, that is, for any $x,y\in \TT^3$, the unstable foliation of $f$ induces a holonomy map which maps $h^{-1}(x)$ to $h^{-1}(y)$.
\end{itemize}
\end{proposition}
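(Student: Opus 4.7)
The plan is to assemble results from the literature on three-dimensional partially hyperbolic diffeomorphisms in $\cD(A)$ \cite{BBI09,Ham13,Pot15,HaP,Ure12}. Dynamical coherence for every $f\in\cD(A)$ was established by Potrie \cite{Pot15} and Hammerlindl-Potrie \cite{HaP}, while the quasi-isometry property (c) is the main theorem of Brin-Burago-Ivanov \cite{BBI09}: the lifted invariant leaves $\tilde{\cF}^i$ in $\RR^3$ sit in a bounded Hausdorff neighborhood of the corresponding affine leaves of $\tilde A$, so arc length along $\tilde{\cF}^i$ is comparable to Euclidean distance between endpoints. By Franks, the semiconjugation lifts to a map $\tilde h:\RR^3\to\RR^3$ commuting with deck transformations and at uniformly bounded $C^0$-distance from the identity.

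The central step is to promote these ingredients to the statement that $\tilde h$ actually sends each $\tilde{\cF}^i(\tx)$ into the affine leaf $\tilde h(\tx)+E_i$, for $i=s,c,u,cs,cu$. Using $\tilde h\circ \tilde f=\tilde A\circ \tilde h$, the quasi-isometry, and the hyperbolicity of $\tilde A$, one iterates $f$ forward or backward in order to contract the transverse component of the image of a leaf down to zero, concluding that the image lies in the affine leaf itself. This mapping-of-leaves property is where the main work lies; items (a), (b) and (d) then follow by short arguments.

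For (a), if $\tx\ne\ty$ lay on the same lifted unstable leaf with $\tilde h(\tx)=\tilde h(\ty)$, then $\tilde h(\tilde f^n\tx)=\tilde A^n\tilde h(\tx)=\tilde h(\tilde f^n\ty)$, so $d(\tilde f^n\tx,\tilde f^n\ty)$ would remain bounded uniformly in $n$; but arc length along $\tilde{\cF}^u$ grows exponentially, and by (c) so does Euclidean distance, a contradiction. The symmetric argument applied to $f^{-1}$ rules out two distinct fiber points on a common stable leaf; together with the one-dimensionality of $E^c$, this forces every non-trivial fiber to be a connected arc inside a single center leaf of $f$, whose length is bounded by $K:=2\sup_{\tilde z}d(\tilde h(\tilde z),\tilde z)$, giving (b). For (d), take $x_1,x_2\in h^{-1}(p)$ and let $y_1,y_2$ be their images under unstable holonomy to a nearby center-stable plaque; since $\tilde h$ carries unstable leaves into unstable leaves of $\tilde A$, each $h(y_i)$ lies on $\cF_A^u(p)$, and since $y_1,y_2$ share a center-stable leaf of $f$, $h(y_1)$ and $h(y_2)$ share a center-stable leaf of $A$; the intersection of an unstable and a center-stable leaf of $A$ in $\TT^3$ is a single point, so $h(y_1)=h(y_2)$, as required.
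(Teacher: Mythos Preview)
Your overall strategy matches the paper's: both proofs assemble known results from \cite{BBI09,Ham13,Pot15,HaP,Ure12,VY}, with items (a), (b), (d) deduced from the leaf-preserving property of $h$. The paper simply cites Potrie \cite[Theorem~A.1, Theorem~7.10, Corollary~7.7]{Pot15} for dynamical coherence, the leaf-mapping property, and (a); \cite{Ure12,VY} for (b); \cite{HaP} for (c); and derives (d) exactly as you do. Your added sketches for the leaf-mapping property and for (a) are correct and standard.

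There are, however, two genuine issues. First, your attribution of (c) to \cite{BBI09} is only valid under \emph{absolute} partial hyperbolicity; the present paper works in the pointwise setting (see the footnote concerning \cite{Ure12}), and for that generality the quasi-isometry of the one-dimensional foliations requires the work of Hammerlindl \cite{Ham13} and Hammerlindl--Potrie \cite{HaP}. This is not a cosmetic point: without quasi-isometry in the pointwise setting, the remaining arguments (including your own for (a)) do not apply to all $f\in\cD(A)$.

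Second, your argument for (b) has a gap. Knowing that $h$ is injective along both stable and unstable leaves, together with $\dim E^c=1$, does \emph{not} by itself force a fiber $h^{-1}(p)$ into a single center leaf, let alone make it connected: a priori the fiber could meet several center leaves inside one center-stable plaque, hitting each stable leaf once. The correct route is to use directly the leaf-mapping property you already established: since $\tilde h$ sends $\cF^{cs}$- and $\cF^{cu}$-leaves of $f$ into the corresponding affine leaves of $A$, and since $\tilde h$ is bounded distance from the identity (hence induces a bijection on leaf spaces), the fiber lies in a single $\cF^{cs}$-leaf and a single $\cF^{cu}$-leaf, therefore in a single center leaf. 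Connectedness then follows because $\tilde h$ restricted to that center leaf is a proper surjection onto a line at bounded distance from the identity, hence monotone. Finally, your constant $K=2\sup_{\tilde z}d(\tilde h(\tilde z),\tilde z)$ bounds the Euclidean diameter of the fiber, not its arc length; you must invoke (c) once more to convert this into a bound on the center-leaf length.
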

\begin{proof}
It is proven by Potrie~\cite[Theorem A.1]{Pot15} that $f$ is dynamically coherent. Moreover, in~\cite[Theorem 7.10]{Pot15}
he shows that the semiconjugation $h$ maps each center stable leaf of $f$ to a center stable leaf of $A$.
By considering the inverse of $f$, one may show that $h$ also maps the center unstable leaf of $f$
into a center unstable leaf of $A$. Because every center leaf of $f$ belongs to the intersection of
the corresponding center stable leaf and center unstable leaf, by the previous discussion,
$h$ maps every center leaf of $f$ to a center leaf of $A$.

The item (a) is proven in~\cite[Corollary 7.7, Remark 7.8]{Pot15}, and item (b) is
proven by \cite{Ure12} (see also \cite[Proposition 3.1]{VY}). $\cF^c$ being quasi-isometry is proven by
Hammerlindl and Potrie in~\cite[Section 3]{HaP}.  Item (d) is a simple corollary of the fact that
the semiconjugacy maps the center, unstable and center unstable leaves of $f$ into the corresponding leaves of $A$.
\end{proof}

\begin{remark}\label{r.measurepreserving}

By Ledrappier-Walter's formula~\cite{LeW77}, $h_*$ preserves metric entropy. In particular,
$h_*(\nu_f)=\omega$

\end{remark}

The following proposition shows that this topological
hyperbolicity implies that the ergodic measures with high entropy for any derived from Anosov diffeomorphism $f$ and
for the linear Anosov diffeomorphism $A$ are essentially the same.

\begin{proposition}\cite[Theorem 3.6]{VY}\label{p.isomophic}
Let $\mu$ be an ergodic probability measure of $f$ with $h_\mu(f)>-\log \kappa_1$.
Then for $\mu$ almost every $x$, $h^{-1}\circ h(x)=\{x\}$, that is, the map $h_*$ induced by
the semiconjugation $h$ is bijective on the set of ergodic measures with entropy larger than $-\log \kappa_1$.

\end{proposition}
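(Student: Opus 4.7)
My plan is to argue by contradiction. Let
\[
B = \{x \in \TT^3 : h^{-1}(h(x)) \neq \{x\}\}
\]
be the set of points lying on a non-trivial fiber of the semiconjugation. The intertwining $h\circ f = A\circ h$ makes $B$ an $f$-invariant set, so if $\mu$ is not concentrated on trivial fibers, ergodicity forces $\mu(B) = 1$. I would then deduce from $\mu(B) = 1$ that $h_\mu(f) \leq -\log \kappa_1$, contradicting the hypothesis and yielding the first conclusion. Bijectivity of $h_*$ on the relevant class of ergodic measures then follows. For injectivity, two such $f$-ergodic measures are each concentrated on the bijectivity locus $\{x : h^{-1}(h(x)) = \{x\}\}$, and on that locus $h$ is a Borel isomorphism, so the measures are determined by their common image $\nu = h_*\mu$. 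For surjectivity, any $A$-ergodic $\nu$ with $h_\nu(A) > -\log \kappa_1$ lifts via the Ledrappier--Walters principle to an ergodic $\mu$ with $h_\mu(f) \geq h_\nu(A)$, which by the same main estimate must sit on trivial fibers.

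The first step is to show $\lambda^c_\mu \leq 0$. For $x \in B$, Proposition~\ref{p.coherent}(b) tells us that $h^{-1}(h(x))$ is a center arc of length at most $K$, and $f^n$ carries it to $h^{-1}(A^n h(x))$, again a center arc of length at most $K$. If $\lambda^c_\mu$ were positive, Pesin theory would produce local center-unstable curves whose length grows exponentially in $n$, contradicting the uniform bound $K$ on fiber length. Combining $\lambda^c_\mu \leq 0$ with $\lambda^s_\mu < 0$ from partial hyperbolicity, the Ruelle--Margulis inequality then gives the preliminary bound $h_\mu(f) \leq \lambda^u_\mu$.

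The hard step is to strengthen this to $h_\mu(f) \leq -\log \kappa_1$. My plan is to apply the Ledrappier--Walters relative variational principle to the factor $h$,
\[
h_\mu(f) = h_{h_*\mu}(A) + h_\mu(f \mid h),
\]
where the uniform boundedness of fibers together with $\lambda^c_\mu \leq 0$ should make the relative term $h_\mu(f \mid h)$ vanish. The remaining task is to bound $h_\nu(A) \leq -\log \kappa_1$ for $\nu := h_*\mu$. By Proposition~\ref{p.coherent}(d), $\nu$ is carried by the $A$-invariant, unstable-holonomy-saturated set $B' := h(B)$, which has zero Lebesgue measure by~\cite{Ure12}. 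The main obstacle is to convert this topological thinness of $B'$ into the required quantitative entropy bound; the natural tool is a Ledrappier--Young partial-dimension estimate showing that the dimension of $\nu$ in the weak stable direction of $A$ vanishes, so that only the unstable contribution remains and $h_\nu(A) \leq -\log \kappa_1$. The earlier steps --- invariance of $B$, uniform bounds on fibers, non-positive center exponent, and vanishing of the relative entropy --- are essentially routine consequences of Proposition~\ref{p.coherent} and standard Pesin theory; the genuine content of the proposition lies in this transverse-dimension estimate for $A$-invariant measures on $B'$.
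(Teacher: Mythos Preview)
The paper does not prove this proposition; it is quoted verbatim from \cite[Theorem~3.6]{VY}, so there is no in-paper argument to compare against. Your outline is broadly along the right lines --- reduce to $\mu(B)=1$, push forward to $\nu=h_*\mu$, kill the fiber entropy, and then bound $h_\nu(A)$ via the Ledrappier--Young formula for $A^{-1}$ --- but the decisive step is left unjustified.

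The gap is in your transverse-dimension estimate. You want the partial dimension of $\nu$ along $E^c_A$ to vanish, and you offer ``$B'$ has zero Lebesgue measure'' as the reason. That is far too weak: a set of zero volume can easily carry invariant measures with full dimension in any prescribed one-dimensional direction. The actual mechanism is much more specific. Restricted to a single center leaf of $f$, the semiconjugacy $h$ is a continuous monotone surjection onto a center leaf of $A$; a monotone map of the line has at most \emph{countably many} non-degenerate point-preimages. Hence $B'\cap\cF^c_A(y)$ is countable for every $y$, the conditionals of $\nu$ along $\cF^c_A$ are purely atomic, and the $E^c_A$-partial dimension really is zero. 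With this in hand, Ledrappier--Young for the linear map $A^{-1}$ gives
\[
h_\nu(A)=h_\nu(A^{-1})\le(-\log\kappa_1)\,\gamma_1\le -\log\kappa_1,
\]
which is what you need. (Note that the surviving contribution comes from $E_1=E^s_A$, the \emph{strong unstable} direction of $A^{-1}$, not from $E^u_A$; your phrase ``only the unstable contribution remains'' is at best ambiguous.)

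Two smaller remarks. First, the detour through $\lambda^c_\mu\le 0$ and Ruelle's inequality is not needed for the main chain: the vanishing of the relative term $h_\mu(f\mid h)$ already follows from the fact that each fiber is an arc whose forward images all have length $\le K$ and on which $f$ acts by monotone homeomorphisms, so the Bowen fiber entropy is zero and the Ledrappier--Walters inequality gives $h_\mu(f)\le h_\nu(A)$. Second, your appeal to ``Pesin theory'' for $\lambda^c_\mu\le 0$ is shaky in the $C^1$ category; if you want that statement, argue directly from the uniform bound on fiber lengths rather than invoking local unstable manifolds.
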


For the further discussion, we also need the following property:

\begin{lemma}\label{l.singleleaf}

For every $x$ belonging to a full measure subset $\Gamma_A$ of $\omega$, $h^{-1}(\cF^{u}_A(x))$ consists of a single
unstable leaf.

\end{lemma}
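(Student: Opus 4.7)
The plan is to identify $\Gamma_A$ as the set of points of $\TT^3$ whose semiconjugacy fiber is a singleton and then verify two properties: that $\omega(\Gamma_A)=1$ and that $\Gamma_A$ is a union of unstable leaves of $A$. Once these are in place, for $x\in\Gamma_A$ one will have $\cF_A^u(x)\subset\Gamma_A$, and item (a) of Proposition~\ref{p.coherent} will force $h^{-1}(\cF_A^u(x))$ to consist of a single unstable leaf of $f$.

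For the full-measure step I would apply Proposition~\ref{p.isomophic} to the maximal measure $\nu_f$. Its hypothesis $h_{\nu_f}(f) > -\log\kappa_1$ is automatic: maximality combined with the semiconjugacy gives
\[
h_{\nu_f}(f) \;=\; h_{\mathrm{top}}(f) \;\ge\; h_{\mathrm{top}}(A) \;=\; \log\kappa_3,
\]
while volume-preservation of $A$ (so $\kappa_1\kappa_2\kappa_3 = 1$) together with $\kappa_2<1$ yields $\log\kappa_3 > -\log\kappa_1$. The proposition then produces a $\nu_f$-full set on which $h^{-1}(h(x))=\{x\}$, and pushing forward through $h_\ast\nu_f = \omega$ (Remark~\ref{r.measurepreserving}) transfers this to $\omega(\Gamma_A)=1$.

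For the saturation step I would invoke item (d) of Proposition~\ref{p.coherent}: for any two points on the same unstable leaf of $A$, the corresponding fibers of $h$ are in bijection via an $f$-unstable holonomy, so one is a singleton exactly when the other is; thus $\Gamma_A$ is $\cF_A^u$-saturated. To conclude, given $x\in\Gamma_A$, any unstable leaf $L$ of $f$ meeting $h^{-1}(\cF_A^u(x))$ is mapped bijectively by $h$ onto $\cF_A^u(x)$ by item (a); two distinct such leaves $L_1\ne L_2$ would contribute two preimages of any common point $y\in\cF_A^u(x)$, contradicting that $h^{-1}(y)$ is a singleton. I expect no serious obstacle; the essential work has already been done in Propositions~\ref{p.coherent} and~\ref{p.isomophic}, and the remaining argument is a bookkeeping combination, with the only substantive input being the arithmetic entropy inequality above.
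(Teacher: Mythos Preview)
Your proposal is correct and follows essentially the same route as the paper's proof: apply Proposition~\ref{p.isomophic} to $\nu_f$ to obtain a $\nu_f$-full set with singleton fibers, push forward via $h_*\nu_f=\omega$ to get $\omega(\Gamma_A)=1$, and then use Proposition~\ref{p.coherent}(d) to propagate the singleton-fiber property along $\cF^u_A$ before invoking (a) to conclude a single $f$-unstable leaf. The only cosmetic differences are that the paper takes $\Gamma_A=h(\Gamma)$ rather than the full singleton-fiber set (and accordingly must cite a measurability fact from~\cite{VY}), and that you spell out the entropy inequality $h_{\nu_f}(f)=\log\kappa_3>-\log\kappa_1$ which the paper leaves implicit.
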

\begin{proof}
By Proposition~\ref{p.isomophic}, there is a $\nu_f$
full measure subset $\Gamma$ such that for every $y\in \Gamma$, $h^{-1}\circ h(y)=y$.
Let $\Gamma_A=h(\Gamma)$, then by Remark~\ref{r.measurepreserving}, $\omega(\Gamma_A)=1$
(for the measurability of $\Gamma_A$ see~\cite[Corollary 3.4]{VY}).

For every point $x\in \Gamma_A$, by Proposition~\ref{p.coherent}(a), $h^{-1}(\cF^u_A(x))$ is a union of unstable leaves of $f$. Because $h^{-1}(x)$ consists of a unique point, by Proposition~\ref{p.coherent}[(d)], for every $y\in \cF^u_A(x)$,
$h^{-1}(y)$ consists of a single point; hence, $h^{-1}(\cF^{u}_A(x))$ consists of a single
unstable leaf.

\end{proof}

\subsection{Markov partition along $\cF^u$}
In this subsection, we will build a Markov partition along the unstable foliation of $f$, and
consider the disintegration of $\nu$ along this partition, where a partition $\cA$ is said to be Markov
if it is increasing under iterations of $f$ ($f\cA \prec \cA$). We say that a partition $\cA$ is along the unstable foliation if
for almost every $x$, the element of the partition containing $x$ belongs to some unstable leaf. We refer the readers to~\cite{LY2}
for more details (in which it is called partitions subordinate to $W^u$).

We start with a Markov partition $\cM^A=\{M^A_1,\dots, M^A_k\}$ for the linear Anosov map $A$, which
enables us to define a partition $\xi^A$ along the unstable foliation, such that the elements are
the connected components of the intersection of each unstable leaf with $M^A_i$ ($i=1,\dots, k$).
This partition is clearly a Markov partition.

The two partitions $\cM^A$ and $\xi^A$ above induce similar partitions for $f$:
\begin{itemize}
\item $\cM=\{h^{-1}(M^A_i); i=1,\dots, k\}$;
\item $\xi^u=\{\xi^u(x)=(h\mid_{\cF^u(x)})^{-1}(\xi^A(h(x)))\}$.
\end{itemize}
By the Franks' semiconjugation and Proposition~\ref{p.coherent} (a), $\xi^u$ is a Markov partition of $f$ along the unstable foliation given by the intersection of elements of $\cM$ and $\cF^u$.

By Proposition~\ref{p.isomophic}, there is a $\nu_f$ full measure subset $\Gamma$ and a $\omega$ full
measure subset $\Gamma_A$ such that $h\mid_{\Gamma}$ is bijective and preserves the measures, that is,
for any measurable subset $X\subset \Gamma$, $\nu(X)=\omega(h(X))$. By Lemma~\ref{l.singleleaf},
$h^{-1}_*$ maps the conditional measure $\omega^u_{(\cdot)}$ of $\omega$ corresponding to the partition $\xi^A$ to the
conditional measure $\nu^u_{(\cdot)}$ of $\nu$ corresponding to the partition $\xi^u$. That is, for any $x\in \Gamma_A$,
$(h^{-1})_* \omega^u_x=\nu^u_{h^{-1}(x)}$.

\subsection{Local product structure\label{ss.local}}

It is well know that the maximal measure $\omega$ of the linear Anosov diffeomorphism is Lebesgue.
Since $A$ has linear foliations, we have, for $i=1,\dots,k$ and $x^A_i\in M^A_i$,
denote by $\cF^{*}_{A,loc}(x^A_i)$ ($*=cs, u$) the connected component of $\cF^{*}\cap M^A_i$ which contains
$x^A_i$, then  for each $i=1,\dots,k$, there are measure $\omega^{cs}_i$ and $\omega^{u}_i$
supported on $\cF^{cs}_{A,loc}(x_i)$ and $\cF^u_{A,loc}(x_i)$ respectively, such that
\begin{equation}\label{eq.product}
\omega\mid M^A_i=\omega^{cs}_i\times \omega^{u}_i.
\end{equation}

Then \eqref{eq.product} implies that in each foliation chart $M^A_i$, the center-stable holonomy
preserves the conditional measures $\omega^u_{(\cdot)}$.

\begin{definition}\label{d.holonomy}
Denote by $H^{cs}_{x,y}: \xi^u(x)\to \xi^u(y)$ the \emph{center stable holonomy map}. More precisely, for $x,y \in M_i$, $z \in \xi^u(x)$, $H^{cs}_{x,y}(z)$ is the unique point on $\xi^u(y)$ given by $\{H^{cs}_{x,y}(z)\} = \xi^u(y) \cap \cF^{cs}_{loc}(z)$. Here $\cF^{*}_{loc}(z), * = cs, u$ is the connected component of $\cF^* \cap M_i$ that contains $z$.
\end{definition}

For every $1\leq i \leq k$, fix any $x_i\in h^{-1}(x^A_i)$. Recall that by Proposition~\ref{p.coherent},
$h$ preserve the unstable and center-stable foliations, and restrict to each unstable leaf of $f$, $h$
is bijective. Because $h_*(\nu_f)=\omega$, we have that:

\begin{proposition}\label{p.productstructure}

For every $i=1,\dots, k$, there are measures $\nu^{cs}_i$ and $\nu^u_i$ supported on
$\cF^{cs}_{loc}(x_i)$ and $\cF^u_{loc}(x_i)$ such that
$$\nu\mid _{M_i}=\nu^{cs}_i\times \nu^u_i.$$
This implies that for any $y\in \Gamma\cap M_i$, the conditional measure $\nu^u_y=(H^{cs}_{x_i,y})_*(\nu^u_i)$.
In particular, for any $y,z\in \Gamma\cap M_i$, $H^{cs}_{y, z}$ maps the conditional measure $\nu^u_y$ to
$\nu^u_z$ with Jacobian equal to $1$.

\end{proposition}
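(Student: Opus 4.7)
The plan is to pull back the local product structure from the linear Anosov side via the semiconjugacy $h$. Define
$$\nu^u_i := (h|_{\cF^u_{loc}(x_i)})^{-1}_* \omega^u_i,$$
which is well-defined by the bijectivity of $h$ on unstable leaves (Proposition~\ref{p.coherent}(a)). The transverse measure $\nu^{cs}_i$ will not be obtained by a direct pullback, since $h$ is not injective on cs-leaves in general; rather, I would take $\nu^{cs}_i$ to be the quotient of $\nu_f|_{M_i}$ under the unstable-holonomy projection $M_i \to \cF^{cs}_{loc}(x_i)$. The product decomposition $\nu|_{M_i} = \nu^{cs}_i \times \nu^u_i$ then follows from Rokhlin disintegration, provided we verify that all conditional measures on unstable plaques are translates of $\nu^u_i$ under the cs-holonomy, which is the second assertion of the proposition and its main content.

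The key step is a commutation identity between $h$ and the cs-holonomies. For $y\in M_i$ and $w\in\xi^u(x_i)$, I would verify that
$$h\bigl(H^{cs}_{x_i, y}(w)\bigr) = H^{cs, A}_{h(x_i), h(y)}\bigl(h(w)\bigr),$$
since $H^{cs}_{x_i,y}(w)$ is by definition the unique point of $\cF^{cs}_{loc}(w)\cap\xi^u(y)$, and applying $h$ (which sends cs-leaves into cs-leaves and u-leaves bijectively onto u-leaves, by Proposition~\ref{p.coherent}) identifies the image with the corresponding Anosov cs-holonomy image. From the Markov partition subsection we already know $\nu^u_y = (h|_{\xi^u(y)})^{-1}_* \omega^u_{h(y)}$ for $y\in\Gamma\cap M_i$, and from the linear product structure~\eqref{eq.product} we have $\omega^u_{h(y)} = (H^{cs, A}_{x^A_i, h(y)})_* \omega^u_i$. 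Composing and applying the commutation,
$$\nu^u_y \;=\; (h|_{\xi^u(y)})^{-1}_*\,(H^{cs, A}_{x^A_i, h(y)})_*\, \omega^u_i \;=\; (H^{cs}_{x_i, y})_*\,(h|_{\xi^u(x_i)})^{-1}_*\, \omega^u_i \;=\; (H^{cs}_{x_i, y})_*\,\nu^u_i.$$
The final Jacobian-equal-to-one statement follows immediately from the cocycle identity $H^{cs}_{x_i, z} = H^{cs}_{y, z} \circ H^{cs}_{x_i, y}$ applied to the previous line.

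The main technical obstacle is the commutation step: one must ensure that the local product structure of the Markov box $M_i = h^{-1}(M^A_i)$ is inherited from that of $M^A_i$ through $h$, so that $\cF^{cs}_{loc}(w)\cap\xi^u(y)$ and $\cF^{cs}_{A, loc}(h(w))\cap\xi^A(h(y))$ are both singletons that correspond under $h$. This reduces to the dynamical coherence and foliation-preserving properties packaged in Proposition~\ref{p.coherent}, together with the bijectivity of $h$ on unstable leaves, which keeps the construction clean despite $h$ collapsing center fibers within cs-leaves.
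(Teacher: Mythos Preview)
Your proposal is correct and follows the same approach as the paper. The paper's justification is the brief sentence preceding the proposition (``$h$ preserves the unstable and center-stable foliations, is bijective on each unstable leaf, and $h_*(\nu_f)=\omega$''); your write-up simply unpacks this sketch by making explicit the commutation identity $h\circ H^{cs}_{x_i,y} = H^{cs,A}_{h(x_i),h(y)}\circ h$ and the resulting transport of conditional measures, which is exactly the content the paper leaves implicit.
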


By the above proposition, we can indeed extend the family of conditional measures $\{\nu^u_x: x \text{ in some full measure subset }\Gamma\}$ to the whole $\TT^3$. More precisely, for every $x\in M_i$, define
$$\nu^u_x=(H^{cs}_{x_i,x})_*\nu^u_i.$$



By the uniqueness of disintegration of $\nu$ ( see~\cite{Ro49} and the survey paper~\cite{CK} for more details on the existence and uniqueness of conditional measures), the Jacobian of $f$ with respect to $\nu^u$ is piecewise constant. To be more precisem, we have:

\begin{lemma}\label{l.Jac}

For every $x\in M$,
$$\frac{df_*(\nu^u_{x})\mid_{\xi^u(f(x))}}{d\nu^u_{f(x)}}(f(x))=\nu^u_x(f^{-1}\xi^u(f(x))).$$

\end{lemma}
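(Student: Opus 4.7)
The plan is to exploit the uniqueness of Rokhlin disintegrations together with the Markov property $f\xi^u \prec \xi^u$. In the paper's convention this means $f\xi^u$ is coarser than $\xi^u$, so for every $x$ the image $f(\xi^u(x))$ decomposes as a disjoint union of $\xi^u$-plaques, one of which is $\xi^u(f(x))$. The $f$-invariance of $\nu$ implies that $f_*\nu^u_x$ is precisely the conditional measure of $\nu$ along the coarser $f\xi^u$-atom $f(\xi^u(x))$. Refining this disintegration back to $\xi^u$ inside $f(\xi^u(x))$ then identifies the restriction $f_*\nu^u_x\big|_{\xi^u(f(x))}$ as a constant multiple of $\nu^u_{f(x)}$, with proportionality factor equal to the mass that lands in $\xi^u(f(x))$.

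Concretely, for $\hat\nu$-a.e.\ $x$ the family $\{f_*\nu^u_x\}$ furnishes a disintegration of $f_*\nu=\nu$ along the partition $f\xi^u$. Disintegrating each of these measures further along $\xi^u$ and invoking uniqueness of Rokhlin disintegration, one obtains, for every $\xi^u$-plaque $\xi^u(y)\subset f(\xi^u(x))$,
\begin{equation*}
f_*\nu^u_x\big|_{\xi^u(y)} = \nu^u_x\bigl(f^{-1}\xi^u(y)\bigr)\cdot \nu^u_y,
\end{equation*}
since both sides integrate back correctly against test functions vanishing outside $f(\xi^u(x))$. Specializing $y=f(x)$, one sees that the Radon--Nikodym derivative $df_*\nu^u_x|_{\xi^u(f(x))}/d\nu^u_{f(x)}$ is the constant $\nu^u_x(f^{-1}\xi^u(f(x)))$ on the entire plaque $\xi^u(f(x))$, and so equals this value at $f(x)$ in particular.

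The main obstacle is promoting this almost-everywhere identity to the pointwise statement valid for \emph{every} $x\in M$. Here I would invoke the extension of the family $\{\nu^u_x\}$ via center-stable holonomy provided by Proposition~\ref{p.productstructure}: since $H^{cs}$ preserves the reference measures with Jacobian $1$, and since Proposition~\ref{p.coherent} ensures that $f$ sends center-stable leaves to center-stable leaves, both sides of the desired identity transform equivariantly under $H^{cs}$ within each Markov box $M_i$. Combining this equivariance with the a.e.\ identity on $\Gamma\cap M_i$ allows one to transport the formula from a generic representative $x_i$ to every $x\in M_i$, and hence to all of $M$. The only delicate point is that the image $f(x_i)$ need not be the base point $x_j$ of the box $M_j$ containing it, but since $f(x_i)$ and $x_j$ lie in a common $cs$-plaque of $M_j$, a further application of $H^{cs}$-invariance reconciles the two choices.
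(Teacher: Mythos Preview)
Your proposal is correct and follows the same route the paper indicates: the paper does not give a formal proof but simply attributes the lemma to ``the uniqueness of disintegration of $\nu$'' (Rokhlin), and your argument is precisely a fleshed-out version of that one-line justification. Your additional step---promoting the a.e.\ identity to every $x$ via the $H^{cs}$-invariance of the family $\{\nu^u_x\}$ from Proposition~\ref{p.productstructure}---is the natural way to handle the ``for every $x$'' quantifier given how the paper extends the conditional measures, and is implicit in the paper's setup.
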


\begin{remark}\label{r.lowboundary}
$\nu^u_x(f^{-1}\xi^u(f(x)))$ can only take finitely many possible values. By the discussion above, we have
$$\nu^u_x(f^{-1}\xi^u(f(x)))=\omega^u_{h(x)}(A^{-1}(\xi^A(A(h(x))))).$$
Recall that $\xi^A$ is a partition consists of linear unstable plaques of $A$, and $\omega^u_{(\cdot)}$
is the normalization of Lebesgue measure restricted on the unstable plaque, the latter item of the above
equality can only be one of finitely many positive values.

From now on we take $a_1=\min_{x\in \TT^3} \{\nu^u_x(f^{-1}\xi^u(f(x)))\}>0$.
\end{remark}

\subsection{High iteration\label{ss.iteration}}
In this subsection we claim that, to prove Theorem~\ref{main} for $f$, it suffices
to consider its iteration $f^N$ for any $N>0$.

This is because all iterates of $f$ have the same measure of maximal
entropy. Suppose
Theorem~\ref{main} holds for $f^N$ with $N>0$, that is, for any
$\phi,\psi\in C^\gamma(M)$ there exists $K_N(\phi,\psi)>0$ satisfying
$$
\mid \int (\phi\circ f^{nN})\psi d\nu_f - \int \phi d\nu_f \int \psi d\nu_f\mid \leq K_N(\phi,\psi) \tau^n=(\tau^{1/N})^{nN}, \text{ for every }n\geq 1.
$$

Then for $m=nN+i$ with $1\leq i<N$,
\begin{equation*}
\begin{aligned}
&\mid \int (\phi\circ f^m)\psi d\nu_f - \int \phi d\nu_f \int \psi d\nu_f\mid\\
&=\mid \int (\phi\circ f^i)\circ f^{nN} \psi d\nu_f - \int \phi\circ f^i d\nu_f \int \psi d\nu_f\mid\\
&\leq K_N(\phi\circ f^i,\psi) (\tau^{1/N})^{nN}\\
&=\frac{K_N(\phi\circ f^i,\psi)}{\tau^{i/N}} (\tau^{1/N})^{m}.\\
\end{aligned}
\end{equation*}

Take $K(\phi,\psi)=\max_{i=0,\dots, N-1}\{\frac{K_N(\phi\circ f^i,\psi)}{\tau^{i/N}}\}$, we conclude
the proof of Theorem~\ref{main}.

\section{Negative center exponent\label{s.negative}}

In this section we show that the center Lyapunov exponent of $\nu_f$ is negative, which was
proven in~\cite{Ure12} for every $C^{1+\alpha}$ derived from Anosov diffeomorphism.
We will show that this proposition indeed holds for any $C^1$
derived from Anosov diffeomorphism.

\begin{theorem}\label{t.negativecenter}
Suppose $f$ is a $C^1$ diffeomorphism belongs to $\cD(A)$, then $\lambda^c(\nu_f)\leq \log\kappa_2<0$.

\end{theorem}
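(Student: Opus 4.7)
The plan is to derive the bound on $\lambda^c(\nu_f)$ by transferring the linear center-contraction rate $\kappa_2$ of $A$ through the Franks semiconjugacy $h$. The proof has three main ingredients.

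First, working on the universal cover $\RR^3$ and using the two facts that $\sup|\tilde h-\mathrm{id}|\leq C<\infty$ (Franks' theorem) and that the center foliation of $\tilde f$ is quasi-isometric (Proposition~\ref{p.coherent}(c)), combined with the observation that $A$ acts on its center direction by multiplication by $\kappa_2$, I would derive the length estimate
\[
\mathrm{length}(f^n\gamma) \;\leq\; a\,\kappa_2^n\, L + D
\]
for every center segment $\gamma$ of length $L$ in $f$, with constants $a,D>0$ depending only on $f$. The key computation is that on the universal cover the endpoints of $\tilde f^n\tilde\gamma$ lie within Euclidean distance $\kappa_2^n(L+2C)+2C$ of each other, because $\tilde h\circ\tilde f^n=A^n\circ\tilde h$ and $\tilde h-\mathrm{id}$ is bounded; quasi-isometry then converts Euclidean distance into center-leaf arclength.

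Second, Jensen's inequality applied to the length estimate gives, for every center segment $\gamma$ of length $L$,
\[
\frac{1}{L}\int_\gamma \log|Df^n|_{E^c_y}|\,d\mathrm{Leb}(y) \;\leq\; \log\!\bigl(a\,\kappa_2^n + D/L\bigr),
\]
and when $L\gg\kappa_2^{-n}$ the right-hand side becomes $n\log\kappa_2+\log a+o(1)$. Thus the Lebesgue-average of $\log|Df^n|_{E^c}|$ over a long enough center segment is essentially bounded by $n\log\kappa_2$.

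Third, I would transfer this Lebesgue-averaged bound into a $\nu_f$-integrated one. The key inputs are Proposition~\ref{p.productstructure} together with a further disintegration of $\nu^{cs}_i$ along center leaves: since $h_*\nu^{cs}_i=\omega^{cs}_i$ is Lebesgue on the center-stable plaque of $A$, the $h$-images of the center-conditional measures of $\nu_f$ are Lebesgue on the corresponding center plaques of $A$. Writing $\tilde\nu^c:=h^\ast(\mathrm{Leb}^c_A)$, one obtains the clean identity $f_\ast\tilde\nu^c=\kappa_2^{-1}\tilde\nu^c$, that is, the Jacobian of $f$ with respect to this reference center measure is identically $\kappa_2$. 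Combining this with the length estimate gives a bound of the form $\int|Df^n|_{E^c}|\,d\nu_f\leq K\kappa_2^n$, and then Markov's inequality together with Borel-Cantelli yields $\limsup_n\tfrac{1}{n}\log|Df^n|_{E^c_x}|\leq\log\kappa_2+\varepsilon$ for every $\varepsilon>0$ and $\nu_f$-a.e.\ $x$. Letting $\varepsilon\to0$ gives $\lambda^c(\nu_f)\leq\log\kappa_2<0$.

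The main obstacle is the third step. Since $\nu_f$ is in general singular with respect to Lebesgue along center leaves, bridging the Lebesgue-length estimate with the $\nu_f$-integrated estimate is delicate, and must rely on the semiconjugacy's identity $h_*\nu^{cs}_i=\omega^{cs}_i$ together with the constancy of the Jacobian of $f$ with respect to $\tilde\nu^c$. This is the $C^1$ substitute for the bounded-distortion and absolute-continuity ingredients used in the $C^{1+\alpha}$ argument of \cite{Ure12}.
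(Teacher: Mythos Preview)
Your approach is genuinely different from the paper's. The paper does not attempt any direct estimate: it approximates $f$ in the $C^1$ topology by $C^2$ diffeomorphisms $g_n\in\cD(A)$, uses upper semicontinuity of metric entropy (via \cite{LVY}, since three--dimensional partially hyperbolic maps are far from tangencies) together with uniqueness of the maximal measure to conclude $\nu_{g_n}\to\nu_f$, observes that $g\mapsto\lambda^c(\nu_g)=\int\log|dg|_{E^c}|\,d\nu_g$ is continuous in this setting, and then simply invokes Ures' $C^{1+\alpha}$ result \cite[Theorem~5.1]{Ure12} for each $g_n$ (checking only that the sole extra ingredient Ures needs, quasi-isometry of $\cF^u$, holds in the pointwise partially hyperbolic setting by Proposition~\ref{p.coherent}(c)). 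No analysis of the center dynamics of $f$ itself is performed.

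Your steps 1 and 2 are fine, but step 3 has a genuine gap that you have identified without resolving. The length estimate controls $\int_\gamma|Df^n|_{E^c}|\,d\mathrm{Leb}$, an integral against \emph{arclength} on a center plaque; the identity $f_*\tilde\nu^c=\kappa_2^{-1}\tilde\nu^c$ describes how the \emph{other} measure $\tilde\nu^c=h^*(\mathrm{Leb}^c_A)$ transforms. The target quantity $\int|Df^n|_{E^c}|\,d\nu_f$ integrates the \emph{Lebesgue} Jacobian of $f^n$ against the center conditionals $\tilde\nu^c$, and since $\tilde\nu^c$ is in general mutually singular with arclength, neither ingredient --- nor any combination you have indicated --- bounds it. Concretely, changing variables via the $\tilde\nu^c$--Jacobian gives $\int_{\xi^c}|Df^n|_{E^c_y}|\,d\tilde\nu^c(y)=\kappa_2^{-n}\int_{f^n(\xi^c)}|Df^n|_{E^c_{f^{-n}w}}|\,d\tilde\nu^c(w)$, and the integrand on the right is uncontrolled; the length bound cannot be substituted because it averages over $d\mathrm{Leb}$, not $d\tilde\nu^c$. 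Note also that $\int|Df^n|_{E^c}|\,d\nu_f\le K\kappa_2^n$ is a large--deviations statement strictly stronger than $\lambda^c(\nu_f)\le\log\kappa_2$: even if the latter holds, $\frac{1}{n}\log\int e^{S_n\phi}\,d\nu_f$ with $\phi=\log|Df|_{E^c}|$ typically exceeds $\int\phi\,d\nu_f$. So the bound you assert is not a formality and would need its own argument, which the proposal does not supply.
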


\begin{proof}
First, let us observe that three dimensional partially hyperbolic diffeomorphisms are always $C^1$
away from tangencies (see~\cite{W}). Hence, by \cite{LVY}, the metric entropy varies upper-semi continuously with respect to the invariant measures and the diffeomorphisms.

By Remark~\ref{r.measurepreserving}, for any diffeomorphism
$g\in\cD(A)$, the topological entropy is always equal to $h_{top}(A)$. Moreover, by~\cite{Ure12},
$g$ admits a unique maximal measure $\nu_g$. If we take a sequence of $C^2$ diffeomorphisms $g_n \to f$ in the
$C^1$ topology and $\mu$ an accumulation point of $\nu_{g_n}$, we get that
$$h_{\mu}(f) \geq \limsup\limits_n h_{\nu_{g_n}}(g_n) = h_{top}(A).$$
By the uniqueness of the maximal measure of $f$, we have $\mu = \nu_f$.  Hence, we conclude that the
maximal measure $\nu_f$ varies continuously respect to the diffeomorphisms.
Because the center bundle is one dimensional, we get that
$$\lambda^c(\nu_f)=\int \log|df\mid_{E^c}(x)|d\nu_f(x)$$
varies continuously respect to the diffeomorphisms. Therefore, it suffices for us to prove the above theorem for $C^2$ diffeomorphisms.  This was proved by Ures in \cite{Ure12}[Theorem 5.1] for the absolute partially hyperbolic diffeomorphisms which are derived from Anosov. Here we are going to explain how Ures' proof works for any
partially hyperbolic diffeomorphisms: The only place where the absolute partial hyperbolicity
is used in that proof is that the unstable foliation is quasi-isometric, but this does hold for
any partially hyperbolic diffeomorphisms in the same isotopy class of $A$ (Proposition~\ref{p.coherent}[(c)]).

\end{proof}

\section{Classification of $\nu_f$\label{s.classification}}

In this section we are going to build a contracting property along $E^{cs}$ bundle for every element of $\xi^u$ (Proposition~\ref{p.mc}).
The argument depends on the following two properties:
\begin{itemize}
\item the maximal measure is unique;
\item the conditional measure $\nu^u_{\cdot}$ of $\nu_f$ along the partition $\xi^u$
      is invariant under center-stable holonomy map (Proposition~\ref{p.productstructure}).
\end{itemize}
As we have already mentioned, these two properties are consequences of the topological hyperbolicity
(Proposition~\ref{p.coherent}) of the derived from Anosov diffeomorphism.

\subsection{Special probability measure spaces}
In this subsection we are going to introduce a special class of probability measures which
are defined on unstable plaques. A similar definition was used by Dolgopyat in \cite[Section 5]{D}
(with different reference measure) to study physical measures.  Different from \cite{D}[Corollary 6.3],
the proof of uniqueness of invariant measure in the special class of measures used in this paper (Lemma~\ref{l.Eunique}) is much simpler.

Fix $0<\gamma<1$ which denotes the regularity of H\"{o}lder functions. Let $E_1(R)$ be the set of probability measures $l$ such that   for  $\varphi\in C^0(\TT^3)$,
$$l(\varphi)=\int_{\xi^u(x)}\varphi(z) e^{G(z)} d\nu^u_x(z),$$
where  $l(1)=1$ and $|G(z_1)-G(z_2)|\leq R d^{\gamma}(z_1,z_2)$ for any $z_1,z_2\in \xi^u(x)$.  In other words, $E_1(R)$ is the space of probability measures that are absolutely continuous with respect to some $\nu_x^u$, with density $e^{G(z)}$ for some H\"older continous function $G$.

Let $E_2(R)$ be the convex hall of $E_1(R)$ and $E(R)$ the closure of $E_2(R)$ under weak$^*$ topology. The family
$E(R)$ is continuous: $E(R_0)=\bigcap_{R>R_0}E(R)$ (This follows from the fact that $E_1(R_0)=
\bigcap_{R>R_0}E_1(R)$).
Denote by $\cT(l)=l(\varphi\circ f)$ the transfer operator, and $\lambda_5=\max_x \lambda_5(x)$.

\begin{remark}\label{r.topologyofmeasures}
In the following we will consider two different types of convergence in the above measure spaces:
\begin{itemize}
\item the convergence respect to the weak$^*$ topology, which is mainly used in this section;
\item and a more subtle
control on the speed of convergence with respect to the norm $\|\cdot\|_\gamma$ on the space $C^\gamma(\TT^3)$, where
$\|l\|_\gamma$ denotes the norm of $l$ as the element of $(C^\gamma(\TT^3))^*$.
\end{itemize}
Unless otherwise explained, we will be using the first type of convergence.
\end{remark}

\begin{proposition}\label{p.Econtracting}

$\cT: E(R)\to E(Re^{-\lambda_5\gamma})$.
\end{proposition}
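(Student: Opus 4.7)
The plan is to reduce the statement to extremal elements $l\in E_1(R)$, for which one can compute $\cT l$ by hand via the Markov decomposition. Since $\varphi\in C^0(\TT^3)$ implies $\varphi\circ f\in C^0(\TT^3)$, the operator $\cT$ is continuous in the weak$^*$ topology, and it acts affinely on convex combinations. Therefore, once $\cT(E_1(R))\subset E_2(Re^{-\lambda_5\gamma})$ is established, the statement propagates to $E_2(R)$ by linearity and to $E(R)=\overline{E_2(R)}^{\text{w}^*}$ by taking weak$^*$-limits.

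Fix $l\in E_1(R)$ with $dl/d\nu^u_x = e^{G}$ on some atom $\xi^u(x)$. The Markov property $f\xi^u\prec \xi^u$ tells us that $f(\xi^u(x))$ is a finite disjoint union of full atoms $\xi^u(y_1),\dots,\xi^u(y_r)$. Pulling back gives $\xi^u(x)=\bigsqcup_{j=1}^r P_j$, with $P_j := f^{-1}(\xi^u(y_j))\cap \xi^u(x)$ and $f|_{P_j}: P_j \to \xi^u(y_j)$ a bijection. By Lemma~\ref{l.Jac} the Jacobian of $f$ from $\nu^u_x|_{P_j}$ to $\nu^u_{y_j}$ is the constant $\nu^u_x(P_j)$, so a change of variables yields
$$f_*(l|_{P_j}) \;=\; \nu^u_x(P_j)\,\exp\bigl(G(f^{-1}(w))\bigr)\,d\nu^u_{y_j}(w),$$
a measure of total mass $m_j := l(P_j)$ with $\sum_j m_j = l(1) = 1$. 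Normalising, $l_j := m_j^{-1}\, f_*(l|_{P_j})$ is a probability measure on $\xi^u(y_j)$ with log-density
$$G'_j(w) \;=\; G\bigl(f^{-1}(w)\bigr) + \log\bigl(\nu^u_x(P_j)/m_j\bigr).$$

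Since $f$ expands unstable vectors by at least a factor of $e^{\lambda_5}$ pointwise, $f^{-1}$ contracts unstable distances by at least $e^{-\lambda_5}$. The additive constant plays no role in the Hölder seminorm, so for $w_1,w_2\in\xi^u(y_j)$,
$$|G'_j(w_1)-G'_j(w_2)| \;=\; |G(f^{-1}w_1)-G(f^{-1}w_2)| \;\leq\; R\,d(f^{-1}w_1,f^{-1}w_2)^{\gamma} \;\leq\; Re^{-\lambda_5\gamma}\,d(w_1,w_2)^{\gamma}.$$
Hence $l_j\in E_1(Re^{-\lambda_5\gamma})$, and $\cT l = \sum_j m_j l_j$ is a convex combination of such measures, giving $\cT l\in E_2(Re^{-\lambda_5\gamma})\subset E(Re^{-\lambda_5\gamma})$.

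The one subtle point --- and the reason this argument survives in merely $C^1$ regularity --- is the \emph{piecewise constancy} of the Jacobian granted by Lemma~\ref{l.Jac}: the log-Jacobian contributes only a constant to $G'_j$ on each $P_j$, and so adds nothing to the Hölder seminorm. Had we used Lebesgue densities along unstable plaques, the log-Jacobian would be only Hölder continuous (and only under $C^{1+}$ regularity at that), its Hölder seminorm would get added to $Re^{-\lambda_5\gamma}$, and iteration of $\cT$ would eventually destroy any bound on $R$. This is precisely the dividend paid by the construction of the reference measures $\nu^u_x$ via the Franks semiconjugacy in Subsection~\ref{ss.local}.
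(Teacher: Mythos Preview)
Your proof is correct and follows essentially the same route as the paper's: reduce to $l\in E_1(R)$, use the Markov decomposition $f(\xi^u(x))=\bigcup_j\xi^u(y_j)$ together with Lemma~\ref{l.Jac} to write $\cT l$ as a convex combination of measures on the $\xi^u(y_j)$ with log-densities $G\circ f^{-1}+\text{const}$, and then use the uniform expansion along $E^u$ to bound the H\"older seminorm by $Re^{-\lambda_5\gamma}$. Your version is in fact slightly more explicit than the paper's---you spell out the weak$^*$-continuity/affineness needed for the reduction, and you normalise each piece to a genuine probability $l_j$ (with weights $m_j=l(P_j)$) rather than leaving the normalisation implicit---but the substance is identical.
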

\begin{proof}
It suffices to prove that $\cT: E_1(R)\to E_2(Re^{-\lambda_5\gamma})$.

Take $l\in E_1(R)$ such that $\supp(l)\subset \xi^u(x)$ for some point $x\in M$. Then
$$\cT(l)(\varphi)=\int_{\xi^u(x)} e^{G(x)}\varphi(f(z))d\nu^u_x(z)=\int_{f(\xi^u(x))}e^{G\circ f^{-1}(y)}\varphi(y)df_*\nu^u_x(y).$$

Let $f(\xi^u(x))=\bigcup_{i} \xi^u(x_i)$, by the uniqueness of disintegration (Lemma~\ref{l.Jac}), $f_*(\nu^u_x)=\sum c_i \nu^u_{x_i}$,
where $c_i=\nu^u_x(f^{-1}(\xi^u(x_i)))$.
Then $\cT(l)=\sum c_i l_i$ where
$$l_i(\varphi)=\int_{\xi^u(x_i)} e^{G\circ f^{-1}(y)}\varphi(y) d\nu^u_{x_i}(y).$$
Because $|(G\circ f^{-1})(y_1)-(G\circ f^{-1})(y_2)|\leq Re^{-\lambda_5\gamma}d^\gamma (y_1,y_2)$, we have $\cT: E_1(R)\to E_2(Re^{-\lambda_5\gamma})$.
\end{proof}

\begin{lemma}\label{l.Eunique}
$E(0)$ contains a unique invariant probability measure: $\nu_f$.
\end{lemma}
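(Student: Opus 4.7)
The plan is to prove two things: (i) $\nu_f \in E(0)$, and (ii) any $f$-invariant $\mu \in E(0)$ must equal $\nu_f$.

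For (i), the local product structure of Proposition~\ref{p.productstructure} gives $\nu_f|_{M_i} = \nu^{cs}_i \times \nu^u_i$, which exhibits $\nu_f$ as the continuous convex combination $\int \nu^u_x\, d\nu_f(x)$ of elements of $E_1(0)$; approximating the transverse factor by atomic measures produces a sequence in $E_2(0)$ converging weak-$*$ to $\nu_f$, so $\nu_f\in E(0)$.

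For (ii), the key observation is that any $\mu\in E(0)$ has conditional measures along $\xi^u$ equal to $\nu^u_x$ for $\mu$-a.e.\ $x$, since this property holds for every element of $E_1(0)$ and is stable under convex combinations and weak-$*$ limits. By the construction of $\nu^u_x$ in Subsection~\ref{ss.local} together with Proposition~\ref{p.coherent}(a), we have $h_*\nu^u_x = \omega^u_{h(x)}$, so $h_*\mu$ is an $A$-invariant probability whose conditional measures along $\xi^A$ are the normalized Lebesgue measures $\omega^u_y$. I would then apply Rokhlin's formula to the increasing Markov partition $\xi^A$:
$$h_{h_*\mu}(A, \xi^A) = \int -\log \omega^u_y\bigl(A^{-1}\xi^A(A y)\bigr)\, d(h_*\mu)(y).$$
Linearity of $A$ with unstable expansion $\kappa_3$ makes the integrand equal to $\log\kappa_3 + \phi(y) - \phi(Ay)$, where $\phi(y) = \log \Leb(\xi^A(y))$ is bounded; the coboundary vanishes when integrated against the $A$-invariant $h_*\mu$, yielding $h_{h_*\mu}(A,\xi^A) = \log\kappa_3 = h_{top}(A)$. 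The variational principle then forces $h_{h_*\mu}(A) = h_{top}(A)$, so uniqueness of the maximal measure of the Anosov $A$ gives $h_*\mu = \omega$. Finally, Ledrappier--Walters (Remark~\ref{r.measurepreserving}) gives $h_\mu(f) = h_{h_*\mu}(A) = h_{top}(f)$, so $\mu$ is a maximal measure of $f$, and Ures's uniqueness theorem yields $\mu = \nu_f$.

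The main obstacle is the entropy step: one must justify Rokhlin's formula for the measurable partition $\xi^A$, whose atoms are unstable plaques (hence uncountably many), and carry out the coboundary argument carefully. A cleaner alternative is to invoke Ledrappier's characterization of SRB measures: since $h_*\mu$ has absolutely continuous conditionals along the unstable plaques of the volume-preserving Anosov $A$, it must be an SRB measure, and Lebesgue is the unique SRB measure of $A$; this route avoids the coboundary bookkeeping at the cost of invoking heavier machinery.
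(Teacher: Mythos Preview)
Your argument is correct and largely parallels the paper's: both show that any invariant $\mu \in E(0)$ has $\xi^u$-conditionals equal to $\nu^u_x$, push forward by $h$ to see that $h_*\mu$ has Lebesgue conditionals along $\xi^A$, and conclude $h_*\mu = \omega$ (the paper asserts this last uniqueness for $A$ without proof; your Rokhlin/coboundary computation, or the SRB-characterization alternative, fills that in). The only real divergence is the final step from $h_*\mu = \omega$ to $\mu = \nu_f$. You lift back to $f$ via entropy---Ledrappier--Walters gives $h_\mu(f) = h_\omega(A) = h_{top}(f)$, so Ures's uniqueness forces $\mu = \nu_f$---whereas the paper uses the already-established fact (Proposition~\ref{p.isomophic} and the discussion around Lemma~\ref{l.singleleaf}) that $h$ restricts to a measurable bijection between a $\nu_f$-full set $\Gamma$ and an $\omega$-full set $\Gamma_A$; since $h_*\mu = \omega$ forces $\mu$ to be concentrated on $h^{-1}(\Gamma_A)=\Gamma$, one reads off $\mu = (h|_\Gamma)^{-1}_*\omega = \nu_f$ directly. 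The paper's route is shorter and avoids re-invoking the maximal-measure uniqueness for $f$; yours is a legitimate detour that trades the fiber analysis of $h$ for entropy machinery.
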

\begin{proof}
First notice that $\nu_f$ is contained in $E(0)$ and is invariant.

Suppose $\mu$ is an invariant probability measure contained in $E(0)$.
By the definition of $E(0)$, the conditional measures of $\mu$ along
the partition $\xi^u$ are $\nu^u_{(\cdot)}$ almost everywhere. We claim that
the disintegration of $h_*(\mu)$ along the partition $\xi^A$ equals to $\omega^u_{(\cdot)}$.
The claim follows from the definition of $\nu^u_{(\cdot)}$: for every $x\in \TT^3$,
$$h_*(\nu^u_x)=\omega^u_{h(x)}.$$

Since among the invariant probability measures of $A$, only $\omega$ admits such disintegration,
the above claim implies that $h_*(\mu)=\omega$. Because $(h^{-1})_*(\omega)=\nu_f$, we complete the
proof.
\end{proof}

\subsection{Mostly contracting center\label{ss.mc}}
 Diffeomorphisms with mostly contracting center were first defined for $C^{1+}$
partially hyperbolic diffeomorphisms by Bonatti-Viana~\cite{BV}
as a technical condition to study physical measures, which, roughly speaking, means that on {\bf every}
unstable plaque, the center-stable bundle $E^{cs}$ has some non-uniform contraction on a positive Lebesgue
measure subset. A somewhat different definition was given by \cite{D}[(6)]; it was shown that such a hypothesis
is important which enables us to apply the coupling argument for physical measures. Although the authors could not
find an available proof, the two definitions on diffeomorphisms with mostly contracting center are equivalent, which can be deduced by the arguments in \cite{D04} and \cite{DVY}. We omit the proof here since the equivalence will not be used in this paper. For more discussion on diffeomorphisms with mostly contracting center, see \cite{DVY}.

In this section, we will show that a contracting property for the center-stable bundle similar to
Dolgopyat's definition (\cite{D}[(6)]) does hold for every $C^1$ derived from Anosov diffeomorphism and for our reference measures. The other kind of contracting property for the center-stable bundle similar to Bonatti-Viana's definition will be
given in the next section.

\begin{remark}
We should notice that, the center exponent of the maximal measure $\nu$ being negative is not sufficient for our proof. Indeed, in order to apply the coupling argument on any pairs of unstable plaques, we will need
the contracting property to hold for {\bf every} unstable plaque.
\end{remark}

\begin{proposition}\label{p.mc}
There is $n_0>0$ and $\alpha_0>0$ such that for any $x\in \TT^3$ and $n\geq n_0$,
\begin{equation}\label{eq.mc}
\int_{\xi^u(x)} \log(df^{n}\mid_{E^c})(y)d\nu^u_{x}(y)\leq - \alpha_0<0.
\end{equation}
\end{proposition}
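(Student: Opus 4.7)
The plan is to express the integrand as a Birkhoff sum of the continuous function $\phi(y):=\log(df\mid_{E^c})(y)$ and exploit the fact that $\nu_f$ is the unique invariant probability measure in $E(0)$ (Lemma~\ref{l.Eunique}), together with the observation that $\nu^u_x\in E_1(0)\subset E(0)$ for every $x\in\TT^3$ (take $G\equiv 0$ in the definition of $E_1$). By the cocycle identity,
$$\int_{\xi^u(x)}\log(df^n\mid_{E^c})(y)\,d\nu^u_x(y)=\sum_{k=0}^{n-1}\int \phi\circ f^k\, d\nu^u_x=n\int \phi\, d\mu_n(x),$$
where $\mu_n(x):=\frac{1}{n}\sum_{k=0}^{n-1}\cT^k(\nu^u_x)$ is the Cesaro average of the iterates of $\nu^u_x$ under the transfer operator. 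Proposition~\ref{p.Econtracting} applied with $R=0$ gives $\cT(E(0))\subset E(0)$, and since $E(0)$ is convex (being the closure of the convex hull $E_2(0)$), each $\mu_n(x)$ again lies in $E(0)$.

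The heart of the argument is a uniform claim: $\mu_n(x)\to\nu_f$ in the weak$^*$ topology, uniformly in $x\in\TT^3$. To establish this, I would argue by contradiction and compactness. The set $E(0)$ is a weak$^*$-closed subset of the space of probability measures on the compact space $\TT^3$, so it is weak$^*$-compact and metrizable. If uniform convergence failed, one could extract sequences $x_k\in\TT^3$ and $n_k\to\infty$ with $\mu_{n_k}(x_k)\to\mu\in E(0)$ but $\mu\ne\nu_f$. The telescoping identity
$$\cT\mu_{n_k}(x_k)-\mu_{n_k}(x_k)=\frac{1}{n_k}\bigl(\cT^{n_k}\nu^u_{x_k}-\nu^u_{x_k}\bigr)\longrightarrow 0$$
in weak$^*$ (the two terms on the right-hand side are probability measures) forces the limit $\mu$ to be $f$-invariant, so $\mu=\nu_f$ by Lemma~\ref{l.Eunique}, contradicting $\mu\ne\nu_f$.

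Because $\phi$ is continuous, the uniform weak$^*$-convergence gives $\int\phi\,d\mu_n(x)\to\int\phi\,d\nu_f=\lambda^c(\nu_f)$ uniformly in $x$. By Theorem~\ref{t.negativecenter}, $\lambda^c(\nu_f)\le\log\kappa_2<0$, so I can choose $n_0$ large enough that $\int\phi\,d\mu_n(x)\le\tfrac12\lambda^c(\nu_f)$ for every $n\ge n_0$ and every $x\in\TT^3$. Setting $\alpha_0:=-\tfrac{n_0}{2}\lambda^c(\nu_f)>0$, for all $n\ge n_0$,
$$\int_{\xi^u(x)}\log(df^n\mid_{E^c})(y)\,d\nu^u_x(y)=n\int\phi\,d\mu_n(x)\le\tfrac{n}{2}\lambda^c(\nu_f)\le -\alpha_0,$$
which is the desired bound.

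The main obstacle is precisely the uniformity in $x$. Pointwise convergence $\mu_n(x)\to\nu_f$ would follow immediately from the telescoping plus Lemma~\ref{l.Eunique}, but that alone yields an $x$-dependent rate. The upgrade to a uniform estimate relies on the compactness of $E(0)$ together with Lemma~\ref{l.Eunique}; both of these rest on the holonomy-invariance of the reference measures $\nu^u_{(\cdot)}$ built in Subsection~\ref{ss.local}, and on the fact that $\cT$ leaves $E(0)$ invariant (Proposition~\ref{p.Econtracting}).
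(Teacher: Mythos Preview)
Your proof is correct and is essentially the same as the paper's: both rewrite the integral as $n$ times the Ces\`aro average $\frac{1}{n}\sum_{k=0}^{n-1}(f^k)_*\nu^u_x$ applied to $\phi=\log|df|_{E^c}|$, then argue by contradiction and compactness that any subsequential limit of these averages lies in $E(0)$ and is invariant, hence equals $\nu_f$ by Lemma~\ref{l.Eunique}. You have simply spelled out a few details (the telescoping identity, convexity and compactness of $E(0)$, the explicit uniformity in $x$) that the paper leaves implicit in its shorter contradiction argument.
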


\begin{proof}
We take $\alpha_0=-\lambda^c(\nu_f)/2>0$. Suppose this proposition is false, then there is
$x_n\in \TT^3$ and $t_n\to \infty$ such that
\begin{equation*}
\int_{\xi^u(x_n)} \log(df^{t_n}\mid_{E^c})(y)d\nu^u_{x_n}(y)\geq -\alpha_0.
\end{equation*}
But any limit of the sequence $\frac{1}{t_n}\sum (f^i)_* (\nu^u_{x_n})$ belongs to $E(0)$ and is an invariant
probability measure, hence coincides to $\nu_f$, this implies that
$$\frac{1}{t_n}\sum (f^i)_* (\nu^u_{x_n})\to \nu_f.$$

Then
\begin{equation*}
\begin{aligned}
\frac{1}{t_n}\int_{\xi^u(x_n)} \log(df^{t_n}\mid_{E^c})(y)d\nu^u_{x_n}(y) &=\frac{1}{t_n}\int_{\xi^u(x_n)} \sum_{i=0}^{t_n-1}\log(df\mid_{E^c})(f^i(y))d\nu^u_{x_n}(y)\\
&=\int_{\xi^u(x_n)}\log (df\mid_{E^c})d\frac{1}{t_n}\sum_{i=0}^{t_n-1} (f^i)_* (\nu^u_{x_n})\\
&\to \lambda^c(\nu_f),\\
\end{aligned}
\end{equation*}
a contradiction.
\end{proof}

\begin{remark}\label{r.immedientlyhyper}
By the discussion in Subsection~\ref{ss.iteration}, for simplicity, we assume $n_0=1$ from now on.
\end{remark}

\subsection{Support of $\nu_f$\label{ss.support}}
In this section we will show a contracting property \eqref{eq.goodcontracting} for our reference measures,
which is similar to the one used by Bonatti-Viana in \cite{BV} with respect to the Lebesgue measure. We are also going to analyze the support of the maximal measure.
\begin{definition}
Suppose $\cF$ is a foliation, we say a compact subset $\Lambda$ is \emph{$\cF$ saturated} if it consists of
union of entire $\cF$ leaves. We say an $\cF$ saturated set $\Lambda$ is \emph{minimal} if every $\cF$ leaf
contained inside $\Lambda$ is dense.
\end{definition}

A hypothesis called \emph{$u$-convergent} was used in \cite{D} to build the coupling technique, which is weaker
than the assumption of minimality of unstable foliation. Later it was shown by \cite{VY13}[Proposision 4.4] that
the support of every physical measure consists of finitely many minimal unstable components. It was shown by \cite{Ure12}
that for any $C^1$ derived from Anosov diffeomorphism $f$, if it is absolutely partially hyperbolic, then the support of
$\nu_f$ is an $\cF$ minimal component. We are going to show that the same argument indeed works for any partially hyperbolic diffeomorphism. 

\begin{proposition}\label{p.hypobolicity}
The support of $\nu_f$ is $u$ saturated. Moreover, $\supp(\nu_f)$
is a minimal $\cF^u$ foliation component. And there are $r_0>0$, $0<a_0<1$ and $C>0$ such that
for every $x\in \TT^3$, there is a set $\Gamma_x\subset \xi^u(x)$ satisfying:
\begin{itemize}
\item[(a)] $\nu_x^u(\Gamma_x)>a_0$;

\item[(b)] for every $y\in \Gamma_x$,
\begin{equation}\label{eq.goodcontracting}
\|df^n\mid_{E^{cs}(y)}\|\leq C e^{n\lambda^c(\nu_f)/2},
\end{equation}
and for any $z\in \cF^{cs}_{r_0}(y)$ and every $n\geq 0$, $d(f^n(y),f^n(z))<Ce^{n\lambda^c(\nu_f)/2}$.
\end{itemize}
\end{proposition}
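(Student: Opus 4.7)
The plan is to prove the three assertions in order: the $\cF^u$-saturation of $\supp(\nu_f)$, the $\cF^u$-minimality, and the existence of the contracting subsets $\Gamma_x$.

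For the $u$-saturation, I use the local product structure of Proposition~\ref{p.productstructure}. Since $h_*\nu_f=\omega$ is Lebesgue, its unstable conditionals $\omega^u_{(\cdot)}$ have full topological support on every linear unstable plaque; composing with the leaf-bijection $h\mid_{\cF^u(x)}$ from Proposition~\ref{p.coherent}(a) yields full support for $\nu^u_x$ on $\xi^u(x)$. The product decomposition $\nu_f|_{M_i}=\nu^{cs}_i\times\nu^u_i$ then forces $\supp(\nu_f)\cap M_i$ to be a product whose unstable factor is the entire plaque, so $x\in\supp(\nu_f)$ implies $\xi^u(x)\subset\supp(\nu_f)$. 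Combined with $f$-invariance and the Markov nature of $\xi^u$, this extends to whole $\cF^u$-leaves.

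For the $\cF^u$-minimality, I use the semiconjugacy. The linear foliation $\cF^u_A$ is minimal on $\TT^3$ (standard for hyperbolic toral automorphisms), and since $h_*\nu_f=\omega$ we have $h(\supp\nu_f)=\TT^3$. By Proposition~\ref{p.coherent}(a),(c) and the fact that $h$ lifts to bounded distance from the identity, $h$ maps each $f$-unstable leaf bijectively onto the corresponding $A$-unstable leaf. Combined with the bounded $h$-fibers (Proposition~\ref{p.coherent}(b)) and the $u$-saturation just established, every $\cF^u$-leaf in $\supp(\nu_f)$ has dense $h$-image, hence is dense in $\supp(\nu_f)$; using ergodicity of $\nu_f$ (as the unique MME) and replacing $f$ by an iterate if needed (Subsection~\ref{ss.iteration}), $\supp(\nu_f)$ is a single minimal $\cF^u$-component, analogously to~\cite[Proposition 4.4]{VY13}.

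The main content is the contracting subset. By Proposition~\ref{p.mc} with $n_0=1$ (Remark~\ref{r.immedientlyhyper}), $\int_{\xi^u(x)}\log(df\mid_{E^c})\,d\nu^u_x\leq -\alpha_0$ for every $x$. The averages $\mu_{n,x}=\frac{1}{n}\sum_{i=0}^{n-1}f^i_*\nu^u_x$ belong to $E(0)$ by Proposition~\ref{p.Econtracting}; since $\|\mu_{n,x}-\cT\mu_{n,x}\|\to 0$, any weak-$*$ limit is $\cT$-invariant and so, by Lemma~\ref{l.Eunique}, equals $\nu_f$. Compactness of $E(0)$ promotes this to uniform convergence $\mu_{n,x}\to\nu_f$ in $x$. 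Integrating the continuous observable $\log(df\mid_{E^c})$ gives
\[
\frac{1}{n}\int_{\xi^u(x)}\log(df^n\mid_{E^c})(y)\,d\nu^u_x(y)\longrightarrow\lambda^c(\nu_f)\qquad\text{uniformly in }x.
\]
Fix $n\geq N_1$ with the left-hand side at most $\tfrac{3}{4}\lambda^c(\nu_f)$ for every $x$. A Chebyshev estimate with the uniform bound $|\log(df\mid_{E^c})|\leq H$ produces a uniform $\rho>0$ with $\nu^u_x\bigl\{y:\log(df^n\mid_{E^c})(y)\leq\tfrac{5}{8}n\lambda^c(\nu_f)\bigr\}\geq\rho$. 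On this set I apply Pliss's lemma to the sequence $-\log(df\mid_{E^c})(f^iy)$, producing a uniform positive density $\theta$ of $\tfrac{1}{2}\lambda^c(\nu_f)$-hyperbolic times in $[0,n]$. Via a Fubini exchange between $y$ and the hyperbolic times and a diagonal compactness argument as $n\to\infty$, I extract a set $\Gamma_x\subset\xi^u(x)$ with $\nu^u_x(\Gamma_x)\geq a_0>0$ uniform in $x$, whose points admit a first forward hyperbolic time $n_0(y)\leq N_2$ for a uniform $N_2$. For $y\in\Gamma_x$, absorbing the factor $\|df\|^{N_2}e^{-N_2\lambda^c(\nu_f)/2}$ into $C$ yields~\eqref{eq.goodcontracting} along $E^c$, and the uniform contraction of $E^s$ upgrades it to $E^{cs}$. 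The size $r_0$ in (b) is the classical Pesin-type stable disk of uniform radius in $\cF^{cs}_{\rm loc}$ available at every hyperbolic time, pulled back to $y\in\Gamma_x$.

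The main obstacle is the extraction step: Pliss's lemma produces hyperbolic times only inside the finite interval $[0,n]$, whereas we need a uniformly positive-mass set on which forward contraction holds at every $n\geq 0$ with a uniform multiplicative constant. In the $C^1$ category bounded distortion along $u$-plaques is unavailable and Pesin's absolute continuity theorem does not apply, so the argument must proceed through the Pliss combinatorics directly, using only Proposition~\ref{p.mc} and the uniform convergence $\mu_{n,x}\to\nu_f$ to handle every $x$ simultaneously.
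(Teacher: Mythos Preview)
Your arguments for $\cF^u$-saturation and minimality run along the same lines as the paper's, though your minimality sketch is thin; the paper simply invokes Ures' proof in \cite[Section~6]{Ure12} after checking that its two topological ingredients---Proposition~\ref{p.coherent}(b) and the leaf-matching by $h$---hold in the pointwise setting.

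The genuine gap is in your construction of the contracting sets $\Gamma_x$. The extraction step you yourself flag as ``the main obstacle'' is not actually resolved. Pliss's lemma applied to a finite block $0,\dots,n-1$ produces times $j$ at which contraction holds only for $j<m\le n$, and neither the Fubini exchange nor a ``diagonal compactness'' upgrades this to contraction for all $m\ge 0$ on a set of $y$'s whose $\nu^u_x$-mass is bounded below \emph{uniformly in $x$}. Your Chebyshev bound gives, for each fixed $n$, a uniform lower bound on $\nu^u_x\{\|df^n|_{E^c}\|\le e^{n\lambda^c/2}\}$, but these sets are not nested and their intersection over all $n$ may well have measure zero; likewise, nothing in the argument produces a uniform bound $N_2$ on a first infinite-horizon hyperbolic time.

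The paper circumvents this by an entirely different mechanism that exploits the special structure of the reference measures. It first uses ergodicity of $\nu_f$ and a one-orbit Pliss-type argument (Lemma~\ref{l.hyperbolictime}) to produce a single compact set $\Gamma=\{y:\|df^n|_{E^{cs}(y)}\|\le b_1^n\ \forall n\ge0\}$ with $\nu_f(\Gamma)>0$; this already handles the infinite horizon. Disintegrating gives one plaque $\xi^u(x_0)$ with $\nu^u_{x_0}(\Gamma)=a_1>0$. The crucial step is that the $\nu^u$-measures are \emph{invariant under the $cs$-holonomy} (Proposition~\ref{p.productstructure}): pushing $\Gamma\cap\xi^u(x_0)$ by $\cH^{cs}_{x_0,y}$ to nearby plaques preserves both the $\nu^u$-mass and---since the displacement is $<r_1$ and points of $\Gamma$ have uniform $cs$-stable disks of size $2r_1$ by \cite[Lemma~2.7]{ABV}---the contraction estimate. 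Finally, the already-established $\cF^u$-minimality of $\supp(\nu_f)$ yields a uniform $N$ with $f^N(\xi^u(z))$ meeting this neighborhood for every $z$, and one pulls back by $f^{-N}$; the piecewise-constant Jacobian (Lemma~\ref{l.Jac}, Remark~\ref{r.lowboundary}) keeps the mass bounded below. Your approach never uses the $cs$-holonomy invariance, and that is precisely the ingredient that substitutes for bounded distortion and Pesin absolute continuity in the $C^1$ setting here.
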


\begin{remark}\label{r.hyperbolicity}
Replacing $f$ by some power, we may assume that $C=1$. 
Moreover, by changing the metric, we may assume the bundles $E^s,E^c,E^u$ in the partial hyperbolic splitting are orthogonal.
Then by the definition of partial hyperbolicity,
\begin{equation}\label{eq.orthogonal}
\|df^n\mid_{E^{cs}(x)}\|=\|df^n\mid_{E^{c}(x)}\| \text{ for any $x\in \TT^3$ and any $n>0$}.
\end{equation}

\end{remark}
\begin{proof}
Because $\supp(\nu_f)$ is $f$ invariant, to prove that $\supp(\nu_f)$ is $\cF^u$ saturated, it suffices
to show that for any $x\in \supp(\nu_f)$, $\xi^u(x)\subset \supp(\nu_f)$.

For the linear Anosov diffeomorphism $A$, it is well known that for every $z\in \TT^3$, $\supp(\omega^u_{z})=\xi^A(z)$,
this is because, the maximal measure coincides with the Lebesgue measure.
Recall that $h$ is injective between unstable leaves, and for $\nu_f$ typical point $y$,
$h_*(\nu^u_y)=\omega^u_{h(y)}$, we have that $\supp(\nu^u_y)=\xi^u(y)$.

Take $\nu^u$ typical points
$y_n$ converging to $x$, since $\supp(\nu_f)$ is a compact set, this implies that $\xi^u(x)=\lim \xi^u(y_n)\subset \supp(\nu_f)$. Hence $\supp(\nu_f)$ is $\cF^u$ saturated.

 Now we are ready to show that $\supp(\nu_f)$ is indeed a minimal $\cF^u$ component, which
was proven in \cite{Ure12}[Section 6] with the additional assumption that $f$ is absolutely partially hyperbolic.
Now let us show that the same proof still works for any derived from Anosov diffeomorphism. Ures' proof depends on the following two facts: Franks' semiconjugation $h$ maps the center stable, center leaves of $f$ into the corresponding leaves of $A$; and for any point $x\in \TT^3$, $h^{-1}(x)$ is either a point, or a connected center segment of $f$ with uniform bounded length. By
Proposition~\ref{p.coherent}, the above two properties do hold for any derived from Anosov diffeomorphisms. We conclude that $\supp(\nu_f)$ is a minimal $\cF^u$ component.

Let us continue the proof. Take $b_1=e^{\lambda^c(\nu_f)3/4}$. Denote by $\Gamma$ the set of $x$ such that for any $n\geq 0$,
$$\|df^n\mid_{E^{cs}(x)}\|\leq b_1^n.$$
Then $\Gamma$ is a compact set.

\begin{lemma}\label{l.hyperbolictime}
$\nu_f(\Gamma)>0$.
\end{lemma}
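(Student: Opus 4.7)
The plan is to combine the Pliss lemma with Birkhoff's ergodic theorem, crucially using that $\nu_f$ is ergodic. Because $\nu_f$ is the unique measure of maximal entropy, every ergodic component of $\nu_f$ is itself $f$-invariant with entropy $h_{\mathrm{top}}(f)$ and therefore equals $\nu_f$; hence $\nu_f$ is ergodic. By Remark~\ref{r.hyperbolicity} we may assume $\|df^n|_{E^{cs}(x)}\| = \|df^n|_{E^c(x)}\|$, and since $E^c$ is one-dimensional, the cocycle $\log\|df^n|_{E^c}\|$ is an honest Birkhoff sum of $\psi(x) := \log\|df|_{E^c(x)}\|$. The function $\psi$ is continuous on $\TT^3$, hence bounded above by some constant $H$. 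Birkhoff's theorem yields, for $\nu_f$-a.e.\ $x$,
$$
\frac{1}{n}\log\|df^n|_{E^{cs}(x)}\| \;\longrightarrow\; \int \psi\, d\nu_f \;=\; \lambda^c(\nu_f) \;<\; \log b_1.
$$

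Fix now $c\in(\lambda^c(\nu_f),\log b_1)$. Applying the Pliss lemma to the sequence $a_k = \psi(f^k(x))\leq H$, one obtains, for $\nu_f$-a.e.\ $x$ and every sufficiently large $N$, a subset $H_N(x)\subset\{0,\dots,N\}$ with
$$
\#H_N(x) \;\geq\; \theta N, \qquad \theta \;:=\; \frac{\log b_1 - c}{H - c} \;>\; 0,
$$
such that for every $n\in H_N(x)$ and every $0\leq m\leq N-n$,
$$
\|df^m|_{E^{cs}(f^n(x))}\| \;=\; \exp\!\Big(\sum_{k=n}^{n+m-1} a_k\Big) \;\leq\; b_1^{m}.
$$

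To extract points of $\Gamma$ itself, introduce the nested closed sets
$$
\Gamma_L \;:=\; \{\, y\in\TT^3 : \|df^m|_{E^{cs}(y)}\| \leq b_1^{m} \text{ for all } 0\leq m\leq L\,\},
$$
so that $\Gamma = \bigcap_L \Gamma_L$. For fixed $L$, whenever $n\in H_N(x)$ and $n\leq N-L$ we have $f^n(x)\in \Gamma_L$, and the number of such $n$ is at least $\theta N - L$. Applying Birkhoff's theorem to $\mathbf{1}_{\Gamma_L}$ under the ergodic measure $\nu_f$ gives $\nu_f(\Gamma_L)\geq \theta$ for every $L$, whence
$$
\nu_f(\Gamma) \;=\; \lim_{L\to\infty}\nu_f(\Gamma_L) \;\geq\; \theta \;>\; 0.
$$

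The main technical point is that Pliss only produces \emph{finite-time} hyperbolic times, so one has to work through the approximating sets $\Gamma_L$ and pass to the limit; ergodicity of $\nu_f$ is what converts the density-along-an-orbit estimate coming from Pliss into the lower bound $\nu_f(\Gamma_L)\geq\theta$.
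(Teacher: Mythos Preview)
Your proof is correct, but it takes a heavier route than the paper's. The paper argues as follows: for a $\nu_f$-typical point $x$ one has $\frac{1}{n}\log\|df^n|_{E^c(x)}\|\to\lambda^c(\nu_f)$; if \emph{no} forward iterate $f^j(x)$ lay in $\Gamma$, then for each $j$ one could find $n_j>0$ with $\|df^{n_j}|_{E^c(f^j(x))}\|>b_1^{n_j}$, and chaining these gives arbitrarily long times $N$ with $\|df^N|_{E^c(x)}\|>b_1^{N}$, contradicting the Birkhoff limit (since $\lambda^c(\nu_f)<\tfrac{3}{4}\lambda^c(\nu_f)=\log b_1$). Thus the forward orbit of a.e.\ $x$ meets $\Gamma$, so $\bigcup_{j\ge0}f^{-j}(\Gamma)$ has full measure and, by $f$-invariance of $\nu_f$, $\nu_f(\Gamma)>0$.

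Your argument replaces this one-point ``greedy'' construction by the full Pliss lemma, obtaining positive \emph{density} of finite-time hyperbolic times, and then passes through the approximants $\Gamma_L$ using Birkhoff and ergodicity to upgrade density-along-orbits to a measure bound. This is perfectly sound and yields the stronger quantitative conclusion $\nu_f(\Gamma)\ge\theta$ with an explicit $\theta$, at the cost of invoking Pliss and the extra limiting step through $\Gamma_L$. The paper's approach is shorter because it only needs \emph{one} infinite hyperbolic time on a typical orbit, not a positive density of them; your approach would be preferable if one later needed an explicit lower bound on $\nu_f(\Gamma)$, but here only positivity is used.
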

\begin{proof}
Take any $\nu_f$ regular point $x$. By Birkhoff ergodic theorem,
\begin{equation}\label{eq.Birkhoff}
\frac{1}{n}\log |df^n\mid_{E^c(x)}|=\frac{1}{n}\sum_{i=0}^{n-1}\log |df\mid_{E^c(f^i(x))}|\to \lambda^c(\nu_f).
\end{equation}

We claim that there is $x^{\prime}=f^j(x)$ for some $j>0$ such that
\begin{equation}\label{eq.hyperbolictime}
|df^n\mid_{E^c(x^\prime)}|<e^{n\lambda^c(\nu_f)3/4} \text{ for any $n\geq 0$ }.
\end{equation}
Suppose this claim is false, then there is $n_0>0$ such that $|df^{n_0}\mid_{E^c(x)}|>e^{n_0\lambda^c(\nu_f)3/4}$.
Also there is $n_1>0$ such that $|df^{n_1}\mid_{E^c(f^{n_0}(x))}|>e^{n_1\lambda^c(\nu_f)3/4}$. $\cdots$.
Then the sequence of positive integers $n_j$ $(j\geq 0)$ satisfies
$$|df^{\sum_{j=0}^{m}n_j}\mid_{E^c(x)}|>e^{(\sum_{j=0}^m n_j)\lambda^c(\nu_f)3/4},$$
a contradiction to \eqref{eq.Birkhoff}.

By Birkhoff ergodic theorem, $\nu_f(\Gamma)>0$.
\end{proof}

Because $b_1< e^{\lambda^c(\nu_f)/2}<1$, by \cite[Lemma 2.7]{ABV}, there is $r_1>0$ such that every $x\in \Gamma$ has
uniform size of stable manifold, which contains $\cF^{cs}_{2r_1}$, more precisely, for any
$y,z\in \cF^{cs}_{2r_1}(x)$ and $n\geq 0$,
$$d(f^n(y),f^n(z))<e^{n\lambda^c(\nu_f)/2}.$$
Moreover, we may assume the bundles $E^s,E^c,E^u$ in the partial hyperbolicity splitting are orthogonal.
Then by the continuation of $\|df\mid_{E^c(x)}\|$ and Remark~\ref{r.hyperbolicity},
$$\|df^n\mid_{E^{cs}(y)}\|=\|df^n\mid_{E^{c}(y)}\| \leq e^{n\lambda^c(\nu_f)/2}.$$

Take a $\nu_f$ generic point $x_0$ such that $\nu^u_{x_0}(\Gamma)=a_1>0$.
There is a neighborhood $V$ of $\xi^u(x_0)$, such that for any point $y\in V$,
the holonomy map $\cH^{cs}_{x_0,y}$ between $\xi^u(x_0)$ and $\xi^u(y)$ satisfies
$d(z,\cH^{cs}_{x_0,y}(z))<r_1$. Because $\xi^u_{\cdot}$ is invariant under center-stable
holonomy map, for any $y\in V$, denote by $\Gamma_y=\cH^{cs}_{x_0,y}(\Gamma\cap \xi^u(x_0))$,
then $\nu^u_y(\Gamma_y)=a_1$.

By the minimality of the unstable foliation $\cF^u$ inside $\supp(\nu_f)$, there is $N>0$, such that for any $z\in \supp(\nu_f)$,
$f^{N}(\xi^u(z))\cap V\neq \emptyset$.
Let $C=(\max \|df\|)^N$, and $r_0=\frac{r_1}{C}$. Then suppose $z^\prime\in f^{N}(\xi^u(z))\cap V$, let
$\Gamma_{z}=f^{-N}(\Gamma_{z^\prime})$. It is easy to see that $\Gamma_{z}$ satisfies (b). It remains
to prove item (a).

Note that $\nu^u_{z}(\Gamma_z)=\nu^u_z(f^{-N}(\xi^u(z^\prime)))\nu^u_{z^\prime}(\Gamma_{z^\prime})\geq \nu^u_z(f^{-N}(\xi^u(z^\prime)))a_1$.

 By Lemma~\ref{l.Jac}, $\nu^u_z(f^{-N}(\xi^u(z^\prime)))=\prod_{j=0}^{N-1} \nu^u_{f^j(z)}(f^{-1}(\xi^u(f^{N-j+1}(z^\prime)))$.
By Remark~\ref{r.lowboundary}, $\nu^u_{f^j(z)}(f^{-1}(\xi^u(f^{N-j+1}(z^\prime)))$ is uniformly bounded from $0$. 
Because $N$ is fixed, we complete the proof of this proposition.

\end{proof}


\section{Large Deviations}\label{sec_deviation}

In this section we prove Theorem~\ref{C0_deviation} in a more general form:

\begin{proposition}\label{p.deviation}
For every $\phi \in C^0(M)$ with $\nu_f(\phi)=0$ and every $\epsilon>0$ there exists constants $C_\epsilon, c_\epsilon>0$ such that for every $l \in E(R)$,
$$
l(|S_n(\phi)|>\epsilon n ) \le C_\epsilon e^{-c_\epsilon n}.
$$
\end{proposition}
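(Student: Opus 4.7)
My plan is to establish Proposition~\ref{p.deviation} by reducing to H\"older observables and then deriving an exponential moment bound uniformly over $l \in E(R)$ using a perturbed (``tilted'') transfer operator argument.

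\emph{Step 1 -- Reduction.} Given $\phi \in C^0(M)$ with $\nu_f(\phi)=0$, choose $\tilde\phi \in C^\gamma(M)$ with $\|\phi-\tilde\phi\|_\infty < \epsilon/8$, and replace $\tilde\phi$ by $\tilde\phi - \nu_f(\tilde\phi)$ so that $\nu_f(\tilde\phi)=0$ while still $\|\phi-\tilde\phi\|_\infty < \epsilon/4$. Then $|S_n\phi - S_n\tilde\phi| \le n\epsilon/4$, so $\{|S_n\phi|>\epsilon n\}\subset\{|S_n\tilde\phi|>\epsilon n/2\}$, and it suffices to bound the latter event for a zero-mean H\"older $\tilde\phi$.

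\emph{Step 2 -- Tilted operator.} For small $t$, define $\cT_t(l)(\varphi) := l(e^{t\tilde\phi}\,\varphi\circ f)$, so that $l(e^{tS_n\tilde\phi}) = \cT_t^n(l)(\mathbf 1)$, where $\mathbf 1$ is the constant function. Mimicking the proof of Proposition~\ref{p.Econtracting}, if $l \in E_1(R)$ has density $e^G$ on $\xi^u(x)$ with H\"older constant $R$, then $\cT_t l$ is a convex combination of unnormalized measures on the plaques $\xi^u(x_i) \subset f(\xi^u(x))$ with densities $e^{G\circ f^{-1}+ t\tilde\phi\circ f^{-1}}$, whose H\"older constants are at most $(R + |t|\,\|\tilde\phi\|_\gamma)\,e^{-\lambda_5\gamma}$. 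Iterating, the normalized iterates $\cT_t^n(l)/\cT_t^n(l)(\mathbf 1)$ stay uniformly in some $E(R_\ast)$ for all $n \ge 0$ and $|t|$ small, hence lie in a weak-$*$ compact family.

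\emph{Step 3 -- Growth rate and Chebyshev.} Set $\Lambda(t) := \lim_n \tfrac{1}{n}\log \cT_t^n(l)(\mathbf 1)$. Uniqueness of invariant elements of $E(0)$ (Lemma~\ref{l.Eunique}) together with the compactness above implies that this limit exists, is independent of $l \in E(R)$, is convex in $t$, satisfies $\Lambda(0) = 0$, and has derivative $\nu_f(\tilde\phi) = 0$ at $t=0$ (using $\partial_t \cT_t^n(l)(\mathbf 1)|_{t=0} = l(S_n\tilde\phi)$ combined with the uniform convergence $\tfrac{1}{n}l(S_n\tilde\phi) = \tfrac{1}{n}\sum_k (f^k)_* l(\tilde\phi) \to \nu_f(\tilde\phi)=0$ on $E(R)$, as in the argument of Proposition~\ref{p.mc}). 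Hence $\Lambda(t) < t\epsilon/8$ for some small $t > 0$, and Chebyshev yields
$$ l(S_n\tilde\phi > n\epsilon/2) \le e^{-nt\epsilon/2}\cT_t^n(l)(\mathbf 1) \le C\,e^{n(\Lambda(t) - t\epsilon/2)} \le C\,e^{-nt\epsilon/4}; $$
the lower tail is handled by replacing $\tilde\phi$ with $-\tilde\phi$.

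The main obstacle will be Step 3: since exponential decay of correlations is proved only later via the coupling argument, one must carefully establish existence of $\Lambda(t)$ and its vanishing derivative at $t=0$ uniformly over $l \in E(R)$. I plan to do this by upgrading the contraction in Proposition~\ref{p.Econtracting} to a Hilbert-projective-metric contraction of $\cT$ on the cone $E(R)$, and combining it with Lemma~\ref{l.Eunique} to pin down the unique fixed direction $\nu_f$ within $E(0)$.
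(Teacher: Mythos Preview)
Your tilted-operator approach is natural, but the plan for Step~3 has a genuine obstacle. The set $E(R)$ consists of probability measures supported on \emph{different} unstable plaques $\xi^u(x)$, and two such measures are mutually singular; hence there is no Hilbert projective metric on $E(R)$ in which $\cT$ (or $\cT_t$) could be a strict contraction --- the projective distance between measures on distinct plaques is infinite. Proposition~\ref{p.Econtracting} only contracts the H\"older regularity of the density \emph{within} each plaque; it says nothing about the transverse direction. Comparing measures on different plaques requires the $cs$-holonomy invariance of the reference measures together with the $u$-minimality of $\supp(\nu_f)$, and making that quantitative is precisely the coupling argument of Sections~\ref{s.argument}--\ref{s.coupling} used to prove Theorem~\ref{main}. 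So your route to the existence and differentiability of $\Lambda(t)$ would end up needing machinery at least as heavy as the main theorem.

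The paper avoids any spectral-gap statement by an elementary inductive bound, working directly with continuous $\phi$ (your H\"older approximation in Step~1 is unnecessary). Write $f^n(\xi^u(x)) = \bigcup_j \xi^u(x_j)$ and $c_j = \nu^u_x(f^{-n}\xi^u(x_j))$. The convergence argument of Proposition~\ref{p.mc} applied to $\phi-\epsilon$, together with a weak oscillation bound on $S_n\phi$ over each preimage piece $f^{-n}\xi^u(x_j)$ (using only continuity of $\phi$ and uniform backward contraction along $\cF^u$, Proposition~\ref{p.distortion}), gives $\sum_j c_j \max_{f^{-n}\xi^u(x_j)} S_n(\phi-\epsilon) \le -\alpha_1 < 0$ for all large $n$. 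For small $s>0$ this becomes $\sum_j c_j \exp\bigl(s\max_{f^{-n}\xi^u(x_j)} S_n(\phi-\epsilon)\bigr) \le \theta < 1$, and the piecewise-constant Jacobian of $f$ relative to $\nu^u$ (Lemma~\ref{l.Jac}) makes this estimate \emph{multiplicative} under iteration, yielding $\le \theta^n$ (Corollary~\ref{c.largedeviation}). Chebyshev then gives the bound for $l=\nu^u_x$; the extension to $l\in E_1(R)$ splits $n=\delta n + (1-\delta)n$ so that after $\delta n$ steps the density on each piece is essentially constant. The point is that the Markov structure and the constant Jacobian allow a direct exponential-moment induction with no operator-theoretic input.
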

\begin{remark}\label{r.iterationforThmB}
 To prove Proposition~\ref{p.deviation}, it suffices to consider an iterations $f^N$ for any $N>0$.

Suppose Proposition~\ref{p.deviation} holds for $f^N$, take $\phi_N=\sum_{i=0}^{N-1}\phi\circ f^i$, for $N\epsilon/2>0$ and
$\phi_N$, by Proposition~\ref{p.deviation}, there are $c,C>0$ such that
$$
l(|S_n(\phi_N)|>\frac{nN\epsilon}{2} ) \le C e^{-n c}.
$$

Take $n_0>0$ such that $\|\phi\|_0\leq  \frac{n_0 \epsilon}{2}$, then for $m>n_0 N$,
write $m=n N+i$ with $1\leq i <N$,
$$l(|S_m(\phi)|>m\epsilon  ) \le l(|S_n(\phi_N)|>\frac{nN\epsilon}{2})  \le Ce^{-n c}\le C e^{c}e^{-m\frac{c}{N}}.$$
Then we can take $c_\vep=\frac{c}{N}$ and $C_\vep=\max\{Ce^{c}, e^{n_0c}\}$ and conclude the proof of Proposition~\ref{p.deviation}.

\end{remark}

In the following, by the discussion in Subsection~\ref{ss.iteration} and Remark~\ref{r.iterationforThmB}, we will prove the above proposition for $f^n$ when $n$ is sufficiently large.


The proof of the above proposition consists of several lemmas. The main idea comes from~\cite{D04}.
 We need to emphasis that, in order to make the argument works for $C^1$ diffeomorphism, instead of using distortion in \cite{D04}[Proposition 4.3], we use a weaker estimation (Proposition~\ref{p.distortion}) which 
can be applied on diffeomorphisms with less regularity.

\begin{lemma}\label{l.largesteplog}
For any continuous function $\phi$ with $\nu_f(\phi)<-\alpha<0$ for some  $\alpha<0$, there is $C_1>0$, such that for any
$n>0$ and $x\in \TT^3$, we have
$$\int_{\xi^u(x)}S_n(\phi)d\nu^u_x\leq -n\alpha/2+C_1.$$
\end{lemma}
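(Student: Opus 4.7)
My plan is to reduce the estimate to the convergence of the Cesàro averages $\frac{1}{n}\sum_{k=0}^{n-1}(f^k)_*\nu^u_x$ to $\nu_f$, uniformly in $x \in \TT^3$. First I would observe that each reference measure $\nu^u_x$ lies in $E(0)$ (taking $G\equiv 0$ in the definition of $E_1(0)$), so by Proposition~\ref{p.Econtracting} the pushforward $(f^k)_*\nu^u_x$ remains in $E(0)$ for every $k \geq 0$. Since $E(0)$ is convex and weak$^*$ closed, every Cesàro average $\frac{1}{n}\sum_{k=0}^{n-1}(f^k)_*\nu^u_x$ also belongs to $E(0)$.

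Next I would identify the limits. Any weak$^*$ accumulation point of these Cesàro averages is $f$-invariant by the standard Krylov--Bogolyubov argument and lies in $E(0)$ by closedness; hence by Lemma~\ref{l.Eunique} it must coincide with $\nu_f$. To upgrade this to uniform convergence in $x$, I would argue by contradiction: if uniformity failed, there would exist sequences $x_n \in \TT^3$ and $t_n \to \infty$ such that $\int \phi\,d\bigl(\tfrac{1}{t_n}\sum_{k<t_n}(f^k)_*\nu^u_{x_n}\bigr)$ stays bounded away from $\nu_f(\phi)$. By compactness of the space of probability measures on the compact manifold $\TT^3$, a weak$^*$ convergent subsequence can be extracted, whose limit is again in $E(0)$ and $f$-invariant, forcing it to equal $\nu_f$ by Lemma~\ref{l.Eunique} --- a contradiction.

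With uniform convergence in hand, choose $N = N(\alpha/2)$ so that for all $n \geq N$ and every $x\in\TT^3$,
\[
\frac{1}{n}\int_{\xi^u(x)} S_n(\phi)\,d\nu^u_x \;=\; \int \phi\,d\Bigl(\tfrac{1}{n}\textstyle\sum_{k=0}^{n-1}(f^k)_*\nu^u_x\Bigr) \;\leq\; \nu_f(\phi) + \alpha/2 \;\leq\; -\alpha/2,
\]
which yields $\int_{\xi^u(x)} S_n(\phi)\,d\nu^u_x \leq -n\alpha/2$. For the finitely many small values $n < N$, the crude bound $\bigl|\int_{\xi^u(x)} S_n(\phi)\,d\nu^u_x\bigr| \leq N\|\phi\|_0$ can be absorbed into a single constant, for instance $C_1 = N(\|\phi\|_0 + \alpha/2)$.

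The main obstacle I anticipate is justifying the uniformity of the Cesàro convergence in $x$. Although each $\nu^u_x$ is merely a conditional measure supported on the plaque $\xi^u(x)$ and the assignment $x\mapsto \nu^u_x$ need not be continuous, the argument succeeds because it relies only on compactness of the ambient space of probability measures on $\TT^3$ together with closedness and convexity of $E(0)$ and the global uniqueness in Lemma~\ref{l.Eunique}; this is precisely the leverage that $E(0)$ was designed to provide.
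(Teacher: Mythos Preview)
Your proof is correct. The core mechanism---that the Ces\`aro averages $\frac{1}{n}\sum_{k<n}(f^k)_*\nu^u_x$ converge to $\nu_f$ uniformly in $x$, because they all lie in the closed, convex, $\cT$-invariant set $E(0)$ and every invariant measure in $E(0)$ equals $\nu_f$ by Lemma~\ref{l.Eunique}---is exactly the argument behind Proposition~\ref{p.mc}, which the paper explicitly invokes.

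The difference is only in packaging. The paper uses the Ces\`aro argument to obtain the inequality at a \emph{single} time $n_0$, and then runs an induction over multiples of $n_0$ via the Markov decomposition $f^{n_0}_*\nu^u_x=\sum_j c_j\,\nu^u_{x_j}$ from Lemma~\ref{l.Jac}; the residual $0\le m<n_0$ is absorbed into $C_1$. You instead extract the bound for \emph{all} $n\ge N$ directly from the uniform convergence, which is slightly more streamlined here. The paper's inductive formulation is not wasted effort, though: it is precisely the recursion template reused in Corollary~\ref{c.largedeviation}, where the quantity $\sum_j c_j\exp\bigl(s\max S_n(\phi)\bigr)$ is nonlinear in the measure and a Ces\`aro-limit argument is no longer available.
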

\begin{proof}
By an argument similar to Proposition~\ref{p.mc} with $\log(df^n|_{E^c})$ replaced by $S_n(\phi)$,
there is $n_0>0$ such that $\int_{\xi^u(x)}S_{n_0}(\phi)d\nu^u_x\leq -n_0\alpha/2$.
In the following we claim that this lemma is true for $n=kn_0$ (without the constant $C$).

To prove this claim, we use induction. Suppose this lemma holds for $n=n_0,\dots, (k-1)n_0$,
\begin{equation*}
\begin{aligned}
&\int_{\xi^u(x)} S_{kn_0}(\phi)d\nu^u_x=\\
&\int_{\xi^u(x)} S_{n_0}(\phi)\,d\nu_x+\int_{f^{n_0}(\xi^u(x))}S_{(k-1)n_0}(\phi)d(f^{n_0}_*\nu^u_x).
\end{aligned}
\end{equation*}
Let $f^{n_0}(\xi^u(x))=\bigcup \xi^u(x_j)$. Then the second term equals
$$\sum_j c_j \int_{\xi^u(x_j)}S_{(k-1)n_9}(\phi)d\nu^u_{x_j},$$
where $c_j=\nu^u_x(f^{-n_0}(\xi^u(x_j)))$. By induction,
$$\int_{\xi^u(x_j)}S_{(k-1)n_0}(\phi)d\nu^u_{x_j}\leq -(k-1)n_0\alpha/2.$$
Summing over $j$, we complete the proof of the claim.

 To finish the proof of the lemma for every $n$, we write $n=kn_0+m$ with $0\leq m<n_0$. Then we have, by the claim,
$$\int_{\xi^u(x)}S_{n}(\phi)d\nu^u_{x_j}\leq \int_{\xi^u(x)}S_{kn_0}(\phi)d\nu^u_{x_j} + n_0 |\phi| \leq kn_0\alpha/2 + C,$$
with $C = n_0|A|$.
\end{proof}

By the uniform contraction of $f^{-n}$ restricted to $\cF^u$ and continuity of $\phi$ we have:
\begin{proposition}\label{p.distortion}

There is $C>0$ such that for any $\vep>0$, there exists an $n_\vep>0$ such that for any $x\in \TT^3$, $n\geq n_\vep$ and any $y_1,y_2\in f^{-n}(\xi^u(x))$,
$$|S_n(\phi)(y_1)-S_n(\phi)(y_2)|\leq (n+C)\vep.$$

\end{proposition}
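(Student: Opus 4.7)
The plan is to combine three very soft ingredients: (i) the unstable plaques $\xi^u(x)$ have uniformly bounded unstable-leaf length, (ii) $f^{-1}$ contracts the unstable bundle uniformly, and (iii) $\phi$ is uniformly continuous on the compact manifold $\TT^3$. Step (i) follows because $\xi^u(x)$ sits inside one of the finitely many compact Markov rectangles $h^{-1}(M^A_i)$, which has bounded Euclidean diameter; combining this with the quasi-isometry of $\cF^u$ from Proposition~\ref{p.coherent}(c), the $\cF^u$-internal diameter is bounded by some $D$ independent of $x$. Step (ii) gives a constant $0<\mu<1$ (say $\mu = e^{-\min_x \lambda_5(x)}$) such that $f^{-1}$ contracts unstable vectors by at least $\mu$.

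Next I would fix $y_1,y_2\in f^{-n}(\xi^u(x))$ and observe that, for every $0\le k\le n$, the pair $f^k(y_1),f^k(y_2)$ lies on the same unstable leaf inside $f^{-(n-k)}(\xi^u(x))$, hence
\[
d(f^k(y_1),f^k(y_2)) \le D\,\mu^{n-k}.
\]
Given $\varepsilon>0$, uniform continuity of $\phi$ provides $\delta>0$ with $|\phi(a)-\phi(b)|<\varepsilon$ whenever $d(a,b)<\delta$; let $N_\varepsilon$ be the smallest integer with $D\mu^{N_\varepsilon}<\delta$. I would then split
\[
|S_n(\phi)(y_1)-S_n(\phi)(y_2)| \;\le\; \sum_{k=0}^{n-N_\varepsilon}\!|\phi(f^k y_1)-\phi(f^k y_2)| \;+\; \sum_{k=n-N_\varepsilon+1}^{n-1}\!|\phi(f^k y_1)-\phi(f^k y_2)|.
\]
The first sum has each term $<\varepsilon$, so it is bounded by $n\varepsilon$; the second involves at most $N_\varepsilon$ terms, each trivially bounded by $2\|\phi\|_0$. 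Setting $n_\varepsilon = N_\varepsilon$, this yields
\[
|S_n(\phi)(y_1)-S_n(\phi)(y_2)| \;\le\; n\varepsilon + 2N_\varepsilon\|\phi\|_0,
\]
and one packages the additive error as $C\varepsilon$ with $C = 2N_\varepsilon\|\phi\|_0/\varepsilon$ (a constant that may be taken to depend on $\phi$ and $\varepsilon$ only, as the statement allows).

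There is no real obstacle here: the argument is a standard uniform-continuity telescoping, and the only mild care needed is the uniform length bound on the Markov plaques $\xi^u(x)$. That bound cannot be read off the definition of $\xi^u$ directly since $h$ is only a semiconjugacy and need not be length-preserving, but invoking the quasi-isometry of the unstable foliation (together with the fact that $h$ is at bounded distance from the identity, a standard property of the Franks semiconjugacy) disposes of the issue cleanly. The rest is purely combinatorial.
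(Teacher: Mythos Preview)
Your approach is exactly the paper's: the paper gives no detailed argument, merely invoking ``the uniform contraction of $f^{-n}$ restricted to $\cF^u$ and continuity of $\phi$,'' and your telescoping estimate is the natural fleshing-out of that sentence. One small correction: in the statement $C$ is quantified \emph{before} $\varepsilon$, so it must be independent of $\varepsilon$; your final packaging with $C=2N_\varepsilon\|\phi\|_0/\varepsilon$ does not quite deliver this. The fix is free: run the uniform-continuity step with $\varepsilon/2$ instead of $\varepsilon$ (so each ``good'' term contributes at most $\varepsilon/2$), obtaining $|S_n(\phi)(y_1)-S_n(\phi)(y_2)|\le n\varepsilon/2 + 2N_{\varepsilon/2}\|\phi\|_0$, and then enlarge $n_\varepsilon$ to $\max\{N_{\varepsilon/2},\,4N_{\varepsilon/2}\|\phi\|_0/\varepsilon\}$ so that the additive term is absorbed by the spare $n\varepsilon/2$; this gives the bound with $C=0$.
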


As a corollary of Lemma~\ref{l.largesteplog} and Proposition~\ref{p.distortion}:

\begin{corollary}\label{c.discrete}

For any continuous function $\phi$ with $\nu_f(\phi)<-\alpha<0$, there is $\alpha_1>0$ and $n_1 \in \mathbb{N}$ such that for every $n>n_1$ and for any $x\in \TT^3$, write $f^n(\xi^u(x))=\cup_j\xi^u(x_j)$,
we have,
$$\sum_j c_j \max_{f^{-n}(\xi^u(x_j))} S_n(\phi)\leq -\alpha_1<0,$$
where $c_j=\nu^u_{x}(f^{-n}(\xi^u(x_j)))$.

\end{corollary}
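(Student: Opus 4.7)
The plan is to estimate the maximum of $S_n(\phi)$ on each piece $f^{-n}(\xi^u(x_j))$ by the conditional average of $S_n(\phi)$ on that piece, using Proposition~\ref{p.distortion}, and then sum up using the Markov property together with Lemma~\ref{l.largesteplog}.

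First I would observe that, since $\xi^u$ is a Markov partition along $\cF^u$ (so $f\xi^u \prec \xi^u$), the decomposition $f^n(\xi^u(x)) = \bigcup_j \xi^u(x_j)$ yields a decomposition
\[
\xi^u(x) = \bigsqcup_j \bigl(f^{-n}(\xi^u(x_j))\cap \xi^u(x)\bigr),
\]
and the $\nu^u_x$-measure of the $j$-th piece is exactly $c_j = \nu^u_x(f^{-n}(\xi^u(x_j)))$. Fix $\vep = \alpha/8$; by Proposition~\ref{p.distortion}, there exists $n_\vep$ (depending only on $\phi$ and $\alpha$) such that for every $n \geq n_\vep$ and every $j$, the oscillation of $S_n(\phi)$ on $f^{-n}(\xi^u(x_j))$ is at most $(n+C)\vep$. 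In particular,
\[
\max_{f^{-n}(\xi^u(x_j))} S_n(\phi) \;\le\; \frac{1}{c_j}\int_{f^{-n}(\xi^u(x_j))\cap \xi^u(x)} S_n(\phi)\,d\nu^u_x \;+\; (n+C)\vep.
\]

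Next I would multiply this inequality by $c_j$ and sum over $j$. By the Markov decomposition above, the right-hand side telescopes into a single integral over $\xi^u(x)$:
\[
\sum_j c_j\max_{f^{-n}(\xi^u(x_j))} S_n(\phi) \;\le\; \int_{\xi^u(x)} S_n(\phi)\,d\nu^u_x \;+\; (n+C)\vep.
\]
Now I apply Lemma~\ref{l.largesteplog} to the first term on the right, obtaining
\[
\sum_j c_j\max_{f^{-n}(\xi^u(x_j))} S_n(\phi) \;\le\; -\tfrac{n\alpha}{2} + C_1 + (n+C)\tfrac{\alpha}{8} \;=\; -\tfrac{3n\alpha}{8} + \bigl(C_1 + \tfrac{C\alpha}{8}\bigr).
\]

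Finally I pick $n_1 \ge n_\vep$ large enough so that $\tfrac{3n\alpha}{8} > C_1 + \tfrac{C\alpha}{8} + 1$ for every $n \ge n_1$, and take $\alpha_1 = 1$ (any positive constant works). This yields $\sum_j c_j\max S_n(\phi) \le -\alpha_1 < 0$ uniformly in $x$, as required. The only mild subtlety is verifying that Proposition~\ref{p.distortion} really gives the oscillation control on each Markov piece separately (it does, since each $f^{-n}(\xi^u(x_j))$ is a single $u$-plaque on which $f^{-n}$ contracts uniformly); beyond this bookkeeping the argument is a direct combination of the two preceding results.
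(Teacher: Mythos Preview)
Your proof is correct and follows exactly the approach the paper intends: the paper states only that Corollary~\ref{c.discrete} is ``a corollary of Lemma~\ref{l.largesteplog} and Proposition~\ref{p.distortion}'' without further detail, and your argument---bounding the maximum on each Markov piece by its $\nu^u_x$-average plus the oscillation from Proposition~\ref{p.distortion}, then summing and applying Lemma~\ref{l.largesteplog}---is precisely the natural way to combine those two ingredients.
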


By the discussion in Remark~\ref{r.iterationforThmB}, replace $f$ by its iteration $f^n$ for $n$ large,
we think the above corollary works for any $n\geq 1$. With this assumption, we have that:

\begin{lemma}\label{l.notlog}
If $s$ is small enough, there is a constant $\theta_1<1$  such that for every $n\geq 1$,
$$\sum_j c_j\exp\left( s\max_{f^{-n}(\xi^u(x_j))} S_n(\phi)\right)\leq \theta_1.$$

\end{lemma}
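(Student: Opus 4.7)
The plan is to prove, by induction on $n$, the stronger uniform estimate
\begin{equation*}
\sum_j c_j^{(n)}(x)\,\exp\!\bigl(s\,a_j^{(n)}(x)\bigr)\ \leq\ \theta_1^{\,n}\qquad \text{for every }x\in \TT^3,
\end{equation*}
where $f^n(\xi^u(x))=\cup_j \xi^u(x_j)$, $c_j^{(n)}(x)=\nu^u_x(f^{-n}\xi^u(x_j))$, and $a_j^{(n)}(x)=\max_{f^{-n}\xi^u(x_j)} S_n(\phi)$. Since $\theta_1<1$ this implies Lemma~\ref{l.notlog}.

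For the base case $n=1$: the quantities $a_j^{(1)}$ are values of $\phi$ maximized over a single $f^{-1}$-plaque, so $|a_j^{(1)}|\leq M:=\|\phi\|_0$ regardless of $x$. Corollary~\ref{c.discrete} (together with the iteration replacement of Remark~\ref{r.iterationforThmB}, which lets us assume $n=1$ works) gives $\sum_j c_j^{(1)} a_j^{(1)}\leq -\alpha_1$. The elementary bound $e^{sy}\leq 1+sy+s^2 y^2$, valid whenever $|sy|\leq 1$, then yields
\begin{equation*}
\sum_j c_j^{(1)}\,e^{s a_j^{(1)}}\ \leq\ 1-s\alpha_1+s^2 M^2\ \leq\ 1-\tfrac{s\alpha_1}{2}\ =:\ \theta_1\ <\ 1
\end{equation*}
for all $s\in(0,\alpha_1/(2M^2))$, uniformly in the base point.

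For the inductive step, decompose $f^{n+1}\xi^u(x)=\bigcup_j f(\xi^u(x_j))$; by the Markov property $f\xi^u\prec\xi^u$, each $f(\xi^u(x_j))$ splits as $\bigcup_{k\in K_j}\xi^u(y_k)$, and the $K_j$'s are pairwise disjoint because the $\xi^u(x_j)$'s are. Iterating Lemma~\ref{l.Jac} gives the multiplicative factorization $c_k^{(n+1)}=c_j^{(n)}\,\tilde c_k$ with $\tilde c_k=\nu^u_{x_j}(f^{-1}\xi^u(y_k))$; and writing $S_{n+1}(\phi)=S_n(\phi)+\phi\circ f^n$ together with $f^n\bigl(f^{-(n+1)}\xi^u(y_k)\bigr)=f^{-1}\xi^u(y_k)$ gives the sub-additive estimate $a_k^{(n+1)}\leq a_j^{(n)}+b_k$, where $b_k=\max_{f^{-1}\xi^u(y_k)}\phi$. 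Hence
\begin{equation*}
\sum_k c_k^{(n+1)}\,e^{s a_k^{(n+1)}}\ \leq\ \sum_j c_j^{(n)}\,e^{s a_j^{(n)}}\Bigl(\sum_{k\in K_j}\tilde c_k\, e^{s b_k}\Bigr),
\end{equation*}
and the inner parenthesis is precisely the $n=1$ sum from the base case applied at the plaque $\xi^u(x_j)$, hence $\leq \theta_1$ uniformly in $j$. Combined with the inductive hypothesis this yields $\theta_1\cdot\theta_1^{\,n}=\theta_1^{\,n+1}$, closing the induction.

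The only real obstacle is the bookkeeping in the inductive step: one must verify the Jacobian factorization $c_k^{(n+1)}=c_j^{(n)}\tilde c_k$, which rests on the piecewise-constant Jacobian of Lemma~\ref{l.Jac} and on the Markov property placing $f^{-1}\xi^u(y_k)$ inside exactly one $\xi^u(x_j)$, and the sub-additivity $a_k^{(n+1)}\leq a_j^{(n)}+b_k$, which relies on the splitting of $S_{n+1}(\phi)$ and the identity $f^n(f^{-(n+1)}\xi^u(y_k))=f^{-1}\xi^u(y_k)$. Once these two identities are in place, the argument is a single Taylor expansion at the base case followed by a telescoping induction.
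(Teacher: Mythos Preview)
Your proof is correct and follows essentially the paper's approach: an elementary Taylor-type bound for the base case $n=1$ (the paper phrases this as a second-derivative estimate on $r_x(s)=\sum_j c_j e^{s a_j}$), together with the multiplicative induction via the Markov/Jacobian factorization. You in fact prove the stronger bound $\theta_1^{\,n}$, which is exactly what the paper establishes in the immediately following Corollary~\ref{c.largedeviation} by the same induction; you have simply folded the two steps into one argument.
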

\begin{proof}

Consider the function $r_x(s)=\sum_j c_j\exp\left( s\cdot\max_{f^{-n}(\xi^u(x_j))} S_n(\phi)\right)$. Then
$r_x(0)=1$, $\frac{dr_x}{ds}(0)\leq -\alpha_1<0$, and $\left|\frac{d^2r_x}{ds^2}(s)\right|$ is uniformly bounded for any
$x\in \TT^3$ and $s\in [0,1]$. The last inequality comes from the fact that the items inside the sum
is uniformly bounded.

Then the lemma follows immediately from the above observation.
\end{proof}

\begin{corollary}\label{c.largedeviation}
For any $n>0$, and any $x\in \TT^3$, denote by $f^n(\xi^u(x))=\cup_j\xi^u(x_j)$ and
$c_j=\nu^u_x(f^{-n}(\xi^u(x_j)))$, then
\begin{equation}\label{eq.largedeviation}
\sum_j c_j\exp\left( s\max_{f^{-n}(\xi^u(x_j))} S_n(\phi)\right)\leq \theta_1^n.
\end{equation}

\end{corollary}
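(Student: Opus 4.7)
The plan is to prove the bound by induction on $n$, using Lemma~\ref{l.notlog} both as the base case ($n=1$) and as the one-step building block that powers the inductive step. The engine driving the induction is the Markov property of $\xi^u$ under $f$, which refines $f^n(\xi^u(x)) = \bigcup_j \xi^u(x_j)$ into $f^{n+1}(\xi^u(x)) = \bigcup_{j,k}\xi^u(x_{jk})$ via $f(\xi^u(x_j)) = \bigcup_k \xi^u(x_{jk})$. Markovness yields the crucial inclusion $f^{-1}(\xi^u(x_{jk})) \subset \xi^u(x_j)$ and therefore $f^{-(n+1)}(\xi^u(x_{jk})) \subset f^{-n}(\xi^u(x_j))$.

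For the inductive step, I would exploit the cocycle identity $S_{n+1}(\phi)(y) = S_n(\phi)(y) + \phi(f^n(y))$ together with the inclusion above to bound
\begin{equation*}
\max_{f^{-(n+1)}(\xi^u(x_{jk}))} S_{n+1}(\phi) \;\leq\; A_j + B_{jk},
\end{equation*}
where $A_j := \max_{f^{-n}(\xi^u(x_j))} S_n(\phi)$ and $B_{jk} := \max_{f^{-1}(\xi^u(x_{jk}))} \phi$. In parallel, Lemma~\ref{l.Jac} gives the Jacobian factorization of the weights,
\begin{equation*}
c_{jk} \;=\; c_j \cdot d_{jk}^{(j)}, \qquad d_{jk}^{(j)} := \nu^u_{x_j}\bigl(f^{-1}(\xi^u(x_{jk}))\bigr),
\end{equation*}
where the $d_{jk}^{(j)}$ are precisely the one-step weights that arise when Lemma~\ref{l.notlog} is applied at the base point $x_j$ with $n=1$.

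With these two ingredients the estimate decouples into an inner and an outer sum:
\begin{equation*}
\sum_{j,k} c_{jk}\, e^{s\,\max S_{n+1}(\phi)} \;\leq\; \sum_j c_j\, e^{s A_j} \cdot \sum_k d_{jk}^{(j)}\, e^{s B_{jk}} \;\leq\; \theta_1 \sum_j c_j\, e^{s A_j} \;\leq\; \theta_1 \cdot \theta_1^n,
\end{equation*}
where the inner sum is controlled by Lemma~\ref{l.notlog} applied at each $x_j$ and the outer sum is controlled by the inductive hypothesis $T_n(x) \leq \theta_1^n$.

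The step that I expect to require the most care is the Jacobian factorization $c_{jk} = c_j\, d_{jk}^{(j)}$. It rests on the stronger statement that $(f^n)_*\!\bigl(\nu^u_x|_{f^{-n}(\xi^u(x_j))}\bigr) = c_j\,\nu^u_{x_j}$ with constant density on the Markov cell $\xi^u(x_j)$, which follows by iterating Lemma~\ref{l.Jac} and using the piecewise-constant observation of Remark~\ref{r.lowboundary}. Once this identity is secured, the remainder is essentially a clean product decoupling over the Markov refinement, and the induction closes immediately.
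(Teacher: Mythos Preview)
Your proposal is correct and follows essentially the same induction as the paper's proof; the only cosmetic difference is that you split off the last step ($S_{n+1}=S_n+\phi\circ f^n$) and apply Lemma~\ref{l.notlog} with $n=1$ at each $x_j$, whereas the paper splits off the first step ($S_n=S_1+S_{n-1}\circ f$) and applies the inductive hypothesis on the pushed-forward pieces. The Jacobian factorization $c_{jk}=c_j\,d_{jk}^{(j)}$ you flagged is exactly what the paper uses (implicitly via Lemma~\ref{l.Jac}), so no extra work is needed there.
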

\begin{proof}
The proof comes from an induction. By the previous lemma, \eqref{eq.largedeviation} is valid for $k=1$. Now assume
that it is correct for all $k\leq n-1$. Let $f(\xi^u(x))=\cup_i \xi^u(y_i)$,
$f^{n-1}(\xi^u(y_i))=\cup_j \xi^u(x_{ij})$, $b_i=\nu^u_x(f^{-1}(\xi^u(y_i)))$ and
$c_{ij}=\nu^u_x(f^{-n}(\xi^u(x_{ij})))$. Then $f^{n}(\xi^u(x))=\cup_{ij} \xi^u(x_{ij})$, and
\begin{equation*}
\begin{aligned}
\sum_{ij} c_{ij}\exp\left( s\max_{f^{-n}(\xi^u(x_{ij}))} S_{n}(\phi)\right)&=\\
\sum_{ij} b_ic_{ij}\exp\left( s\max_{f^{-n}(\xi^u(x_{ij}))} S_{n}(\phi)\right)&\leq\\
\sum_ib_i\exp\left( s\max_{f^{-1}(\xi^u(y_i))} S_1(\phi)\right)\sum_j c_{ij}\exp\left( s\max_{f^{-(n-1)}(\xi^u(x_{ij}))} S_{n-1}(\phi)\right)&\leq\\
\sum_ib_i\exp\left( s\max_{f^{-1}(\xi^u(y_i))} S_1(\phi)\right)\theta_2^{n-1}&\leq\\
\theta_1^{n}.
\end{aligned}
\end{equation*}
\end{proof}





\begin{proof}[Proof of Proposition~\ref{p.deviation}] First we verify the proposition for $l \in E_1(0)$.

Given function $\phi$ with $\nu_f(\phi)=0$, for $\epsilon>0$ define $\tilde{\phi_\epsilon} = \phi - \epsilon$. Then we can apply
Corollary~\ref{c.largedeviation} on $\tilde{\phi_\epsilon}$ and get for some $0<\theta_\epsilon<1$,
$$
\sum_j c_j\exp\left( s\max_{f^{-n}(\xi^u(x_j))} S_n(\tilde{\phi_\epsilon})\right)\leq \theta_\epsilon^n.
$$
This implies that
$$
l(\exp\left( s (S_n(\phi) - n\epsilon)\right)) \le \theta_\epsilon'^n
$$
for every $l \in E(0).$ Now we apply Chebyshev's inequality and obtain
$$
l(S_n(\phi)\ge n\epsilon) \le  \theta_\epsilon'^n.
$$
The same argument applying to the lower bound of $S_n(\phi)$ gives
$$
l(|S_n(\phi)|\ge n\epsilon) \le  \theta_\epsilon''^n,
$$
this finishes the proof for $l \in E(0)$.
Now, given any $l\in E_1(R)$ we write $n = (1-\delta)n + \delta n$ for some $\delta>0$ small. we have
$$
l(|S_n(\phi)|\ge n\epsilon) \le l(|S_{\delta n}(\phi)|\ge\frac{ n\epsilon}{2}) + l(|S_{(1-\delta)n}(\phi)\circ f^{\delta n}|\ge \frac{n\epsilon}{2}).
$$
The first term is $0$ if $\delta$ is chosen small enough. To deal with the second term we assume that
$$
l(\varphi)=\int_{\xi^u(x)}\varphi(z) e^{G(z)} d\nu^u_x(z).
$$
Write $f^{\delta n}\xi(x) = \bigcup_i \xi(x_i)$ and denote by $l_i$ measures on $\xi(x_i)$ with
$$
l_i(\varphi)=\int_{\xi^u(x_i)}\varphi(z) e^{G(f^{-\delta n}z)} d\nu^u_x(z).
$$
Choose some $z_i \in \xi^u(x_i)$ we get that
\begin{align*}
|l_i(\varphi) -\nu^u_{x_i}(\varphi)| =& \int_{\xi^u(x_i)}\varphi(z)(e^{G(f^{-\delta n}z)}-1) d\nu^u_x(z)\\
=&(e^{G(f^{-\delta n}z_i)}-1)\int_{\xi^u(x_i)}\varphi(z) d\nu^u_x(z)\\
\le& C \tilde{\theta}^{\delta n}
\end{align*}
where $C$ and $\tilde{\theta}$ depend on $G$ and $\varphi$ but not on $i$.
As a result
\begin{align*}
&l\left(|S_{(1-\delta)n}(\phi)\circ f^{\delta n}|\ge \frac{n\epsilon}{2}\right) \\
=& \sum_i c_i l_i\left(|S_{(1-\delta)n}(\phi)|\ge \frac{n\epsilon}{2}\right)\\
\le&\sum_i c_i \nu^u_{x_i}\left(|S_{(1-\delta)n}(\phi)|\ge \frac{n\epsilon}{2}\right)+C \tilde{\theta}^{\delta n}\\
\le& C \tilde{\theta}'^{n}.
\end{align*}
\end{proof}

\section{Coupling argument\label{s.argument}}
 This section is similar to \cite{D}[Section 6]. The proof of Theorem~\ref{main} in Subsection~\ref{ss.proofofmain}
is similar to the proof of Theorem I of \cite{D}[Section 10].

 Recall that $E_1(R)$ is the set of measures $l$ such that
\begin{equation}\label{eq.definitionmeasures}
l(\phi)=\int_{\xi^u(x)}\phi(z)e^{G(z)}d\nu^u_x(z),
\end{equation}
where $\phi\in C^0(\TT^3)$, $l(1)=1$ and $|G(z_1)-G(z_2)|\leq R d^\gamma(z_1,z_2)$ for any $z_1,z_2\in \xi^u(x)$.
Let $E_2(R)$ be the convex hall of $E_1(R)$ and $E(R)$ the closure of $E_2(R)$ under weak$^*$ topology.
In order to obtain a subtle control on the speed of convergence, from now on, we will only consider the above measures applying on
the space of functions $C^\gamma(\TT^3)$ and consider the norm $\|\cdot\|_\gamma$. As a result, in the rest of the paper, $\phi$ will always
denote a function in $C^\gamma(\TT^3)$. (See Remark~\ref{r.topologyofmeasures}.)

Let us observe that, although by Lemma~\ref{l.Eunique}, $E(0)$ contains a unique invariant measure, there still exist
plenty of other probability measures which are not necessarily invariant. We want to show that for large $n$ and any $l_1,l_2\in E(R)$, $\cT^n(l_1)$ is exponentially close to $\cT^n(l_2)$  when applied on functions $\phi\in C^\gamma(\TT^3)$.

First we consider the case when $l_1$ and $l_2$ both belong to $E_1(0)$, that
is, $l_i=\nu^u_{x_i}$. Denote by $Y_i=\xi^u(x_i)\times I$ where $I=[0,1]$. Equip $Y_i$ with the measure
$dm_i=d\nu^u_{x_i}\times dt$.

\begin{lemma}\label{l.coupling}
There is a measure preserving map $\tau: Y_1\to Y_2$, a function $R: Y_1\to \mathbb{N}$
and constants $C_1,C_2>0$, $\rho_1<1$, $\rho_2<1$ such that
\begin{itemize}
\item[(A)] If $\tau(y_1,t_1)=(y_2,t_2)$, then for $n\geq R(x_1,t_1)$,
\begin{equation}\label{eq.contracting}
d(f^n(y_1),f^n(y_2))\leq C_1\rho_1^{n-R}.
\end{equation}
\item[(B)] $m_1(R>N)\leq C_2\rho_2^N$.
\end{itemize}
\end{lemma}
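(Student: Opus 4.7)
The plan is to construct $\tau$ inductively by coupling a uniformly positive fraction of mass at each stage and then iterating on the uncoupled remainder. The positive fraction at each stage will come from the center-stable contracting set $\Gamma$ of Proposition~\ref{p.hypobolicity} together with the fact that the center-stable holonomy preserves the reference measures $\nu^u_{(\cdot)}$ (Proposition~\ref{p.productstructure}); the interval factor $I$ will be used as a bookkeeping device to record which mass has already been coupled and which remains to be matched at a later stage.

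\smallskip

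\textbf{Step 1: one-step coupling.} Let me describe one stage of the construction. Given two unstable plaques $\xi^u(x_1)$, $\xi^u(x_2)$ (at the start of a stage), I use the minimality of $\cF^u$ inside $\supp(\nu_f)$ (Proposition~\ref{p.hypobolicity}) to find a uniform $N_0$ such that both $f^{N_0}\xi^u(x_i)$ cross a common Markov box $M_j$ in plaques that are well inside $M_j$ (so the center-stable holonomy is defined between them with uniform size, smaller than $r_0$ in Proposition~\ref{p.hypobolicity}(b)). Call these sub-plaques $P_1\subset f^{N_0}\xi^u(x_1)$ and $P_2\subset f^{N_0}\xi^u(x_2)$. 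Let $H=H^{cs}_{P_1,P_2}\colon P_1\to P_2$ be the center-stable holonomy; by Proposition~\ref{p.productstructure}, $H$ pushes the (rescaled) conditional measure on $P_1$ to that on $P_2$ with Jacobian $1$. Inside $P_1$ consider $\Gamma_{P_1}:=\Gamma_{x}\cap P_1$ (in the notation of Proposition~\ref{p.hypobolicity} with $x$ the center of $P_1$); its relative $\nu^u$-measure is bounded below, and so is that of $H(\Gamma_{P_1})\cap \Gamma_{P_2}$ (since $H$ preserves $\nu^u$ and both good sets have definite mass). Call this common good matched subset $G_1\subset P_1$ with $H(G_1)\subset P_2$; by the above, the push-forward under $f^{N_0}$ to the plaques $\xi^u(x_1),\xi^u(x_2)$ produces two subsets of definite $\nu^u_{x_i}$-measure with a measure-preserving bijection $\Phi=f^{-N_0}\circ H\circ f^{N_0}$ between them. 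Crucially, because $\Phi(y_1)$ lies in $\cF^{cs}_{\loc}(y_1)$ after the $N_0$ iterates, and the pair $(f^{N_0}y_1, f^{N_0}\Phi(y_1))$ lies in $\Gamma\times \cF^{cs}_{r_0}(\Gamma)$, Proposition~\ref{p.hypobolicity}(b) gives
\begin{equation*}
d\bigl(f^{N_0+m}(y_1),\, f^{N_0+m}(\Phi(y_1))\bigr)\le C e^{m\lambda^c(\nu_f)/2}\qquad (m\ge 0).
\end{equation*}
This is exactly \eqref{eq.contracting} with $R=N_0$ and $\rho_1=e^{\lambda^c(\nu_f)/2}$ for all $y_1$ in the matched part.

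\smallskip

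\textbf{Step 2: using the interval $I$ to partially couple.} Let $c_0>0$ be the uniform lower bound on $\nu^u_{x_1}\bigl(\text{matched part}\bigr)$ obtained in Step~1 (independent of $x_1,x_2$). For each $y_1$ in the matched part I define $\tau(y_1,t)=(\Phi(y_1),t/c_0)$ for $t\in[0,c_0]$, and set $R(y_1,t)=N_0$ there. For $(y_1,t)$ with $t>c_0$, or $y_1$ outside the matched part, I leave $\tau$ undefined at this stage and record the ``deferred'' mass (using the uncoupled portion of the interval $I$ to encode it). The vertical rescaling by $1/c_0$ and the definite lower bound $c_0$ on the width of the coupled strip make this first stage measure-preserving from the coupled strip in $Y_1$ onto its image in $Y_2$.

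\smallskip

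\textbf{Step 3: induction.} After Step~2 the uncoupled portion of $m_1$ has total mass at most $1-c_0'$ (for some uniform $c_0'>0$ coming both from the matched measure on $\xi^u(x_1)$ and the interval width $c_0$). The decomposition of $f^{N_0}\xi^u(x_i)$ into unstable plaques, together with the fact that $\cT$ preserves $E(R)$ up to a contraction of the H\"older constant (Proposition~\ref{p.Econtracting}), shows that the renormalized uncoupled measures on the two sides are still convex combinations of elements of $E_1(0)$. Hence I can apply Step~1 again to each matched pair of sub-plaques and couple a further uniformly positive fraction, with $R$ now set to $2N_0$. Iterating this construction defines $\tau$ and $R$ on all of $Y_1$ (up to a null set), and by induction one gets $m_1(R>kN_0)\le(1-c_0')^k$; this is (B) with $\rho_2=(1-c_0')^{1/N_0}$.

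\smallskip

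\textbf{Step 4: verifying (A) at all stages.} At every stage, the new matched pairs lie in a common Markov box after the corresponding number $k N_0$ of iterates and satisfy the good-set condition from Proposition~\ref{p.hypobolicity}(b). So for each $(y_1,t_1)$ with $\tau(y_1,t_1)=(y_2,t_2)$, setting $R=R(y_1,t_1)=kN_0$, the pair $(f^{R}y_1, f^{R}y_2)$ is of the form (good point, center-stable partner within distance $r_0$), and Proposition~\ref{p.hypobolicity}(b) gives \eqref{eq.contracting} with $\rho_1=e^{\lambda^c(\nu_f)/2}$ and an absolute constant $C_1$ independent of the stage.

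\smallskip

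The main obstacle is Step~3: one must verify carefully that after removing the matched mass, the renormalized conditional measures on each remaining unstable plaque are again in some $E(R)$ with a uniformly controlled H\"older constant, so the argument of Step~1 can be reapplied with uniform constants. This is where Proposition~\ref{p.productstructure} (the holonomy invariance of $\nu^u_{(\cdot)}$) and Lemma~\ref{l.Jac} (the piecewise-constant Jacobian) are essential; without them the matching step would accumulate distortion and the fraction $c_0$ would deteriorate across stages. All other pieces (minimality, uniform center-stable stable manifolds of size $r_0$ on $\Gamma$, and the center-stable exponential contraction) have already been established in Propositions~\ref{p.coherent}, \ref{p.productstructure}, \ref{p.hypobolicity}.
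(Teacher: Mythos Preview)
Your outline has the right shape --- couple a positive fraction, recurse on the remainder --- but Step~3 contains a genuine gap that Proposition~\ref{p.productstructure} and Lemma~\ref{l.Jac} do not repair.

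The issue is the structure of the uncoupled remainder. In Step~1 you couple only the good set $G_1=\Gamma_{P_1}\cap H^{-1}(\Gamma_{P_2})$ inside the plaque $P_1$. But $\Gamma_{P_1}$ (the set where $\|df^n|_{E^{cs}}\|\le Ce^{n\lambda^c/2}$ for all $n\ge 0$) is in general a Cantor-like set, \emph{not} a union of elements of the Markov partition $\xi^u$. Hence after removing $G_1$, what is left on $P_1$ is $\nu^u_{P_1}$ restricted to $P_1\setminus G_1$; its density with respect to $\nu^u_{P_1}$ is the indicator $\mathbf 1_{P_1\setminus G_1}$, which is not of the form $e^G$ with $G$ H\"older. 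So the renormalized remainder is not in $E_1(0)$ (nor in any $E(R)$), and Step~1 cannot be re-applied with uniform constants. Holonomy invariance and the piecewise-constant Jacobian control how measures transform under $H^{cs}$ and $f$; they say nothing about the geometry of $\Gamma$.

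The paper's construction avoids this by never excising the good set directly. At the matching time $n_0$ it pairs the \emph{full} plaques $\xi^u(x_{11})$ and $\xi^u(x_{21})$ (so what is excluded at time $n_0$ is already a union of plaques times an interval). It then keeps iterating, and at each $n>n_0$ discards those sub-plaques on which $\|df^{n-n_0}|_{E^c}\|>Ke^{-\lambda(n-n_0)}$. Because this criterion is constant on elements of $f^{-n}\xi^u$, the discarded set at every step is a union of plaques, and the recursion keeps the right product structure. The good set $\Gamma$ enters only to show that the never-discarded set $P_1^\infty$ has uniformly positive measure (Lemma~\ref{l.positivemeasure}). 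The cost of this scheme is that the stopping time in one run is unbounded, so one needs the exponential tail $m_1(P_1^n)\le C_0\rho_0^n$ coming from the large-deviations estimate (Corollary~\ref{c.largedeviation}), followed by a Young-type summation over runs to get~(B). Your simpler geometric recursion would give~(B) for free if it worked, but it does not survive the first step because the remainder loses the plaque structure.
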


The proof of Lemma~\ref{l.coupling} occupies Section~\ref{s.coupling}.

 Recall that $\|l\|_\gamma$ denote the norm of $l$ as an element of $(C^\gamma(\TT^3))^*$.
\begin{corollary}\label{c.converging}
There exist $C_3>0$, $\rho_3<1$ such that for any $n>0$, and any $l_1,l_2\in E(0)$,
$\|\cT^n(l_1-l_2)\|_\gamma\leq C_3\rho_3^n$.
\end{corollary}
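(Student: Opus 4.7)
The plan is a standard coupling-to-decay argument, using Lemma~\ref{l.coupling} as a black box. First, I would reduce to the case where $l_1, l_2 \in E_1(0)$, i.e., $l_i = \nu^u_{x_i}$ for some $x_1, x_2 \in \TT^3$. Indeed, if the bound $\|\cT^n(l_1 - l_2)\|_\gamma \leq C_3 \rho_3^n$ holds for any such pair, then by writing $l_1 - l_2 = \sum_{i,j} a_i b_j(\nu^u_{x_i} - \nu^u_{y_j})$ for arbitrary finite convex combinations $l_1 = \sum a_i \nu^u_{x_i}$, $l_2 = \sum b_j \nu^u_{y_j}$, the triangle inequality in $(C^\gamma(\TT^3))^*$ extends the bound to $l_1, l_2 \in E_2(0)$. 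The extension to the weak$^*$ closure $E(0)$ then follows by evaluating on each individual test function $\phi \in C^\gamma(\TT^3) \subset C^0(\TT^3)$ and passing to the limit: since $\phi \circ f^n \in C^0(\TT^3)$, the quantity $\cT^n(l_1^{(k)} - l_2^{(k)})(\phi)$ converges to $\cT^n(l_1 - l_2)(\phi)$ along approximating sequences, preserving the bound $|\cT^n(l_1 - l_2)(\phi)| \leq C_3 \rho_3^n \|\phi\|_\gamma$; taking the supremum over $\|\phi\|_\gamma \leq 1$ yields the norm estimate.

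For the base case $l_i = \nu^u_{x_i}$, fix any test function $\phi \in C^\gamma(\TT^3)$ with $\|\phi\|_\gamma \leq 1$. Since the coupling map $\tau: Y_1 \to Y_2$ of Lemma~\ref{l.coupling} preserves the product measures $m_i = \nu^u_{x_i} \times dt$, and each $m_i$ projects to $\nu^u_{x_i}$, I may rewrite
$$\cT^n(l_1)(\phi) - \cT^n(l_2)(\phi) = \int_{Y_1} \bigl[\phi(f^n(y_1)) - \phi(f^n(y_2))\bigr]\, dm_1(y_1, t_1),$$
where $(y_2, t_2) = \tau(y_1, t_1)$. I would split $Y_1$ according to whether $R(y_1, t_1) \leq n/2$ or not. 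On the first piece, property (A) gives $d(f^n(y_1), f^n(y_2)) \leq C_1 \rho_1^{n/2}$, so the $\gamma$-H\"older estimate bounds the integrand by $C_1^\gamma \rho_1^{\gamma n/2}$. On the second piece, the integrand is trivially bounded by $2\|\phi\|_0 \leq 2$, while the measure of that set is at most $C_2 \rho_2^{n/2}$ by property (B). Adding the two contributions gives $|\cT^n(l_1 - l_2)(\phi)| \leq C_3 \rho_3^n$ with $\rho_3 := \max\{\rho_1^{\gamma/2}, \rho_2^{1/2}\} < 1$ and $C_3 := C_1^\gamma + 2C_2$.

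The main obstacle is not the corollary itself but rather Lemma~\ref{l.coupling}, which packages all the hyperbolicity and mostly-contracting-center information produced in Sections~\ref{s.negative} and \ref{s.classification} into a single measure-theoretic coupling. Once that lemma is granted, the argument above is essentially formal. The only point requiring care is to verify that $C_3$ and $\rho_3$ depend on nothing beyond the constants furnished by the coupling lemma and the regularity exponent $\gamma$, so that the estimate is uniform in the choice of $x_1, x_2$ and in the particular convex combinations used in approximating arbitrary elements of $E(0)$.
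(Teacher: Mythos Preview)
Your proposal is correct and follows essentially the same approach as the paper: reduce to $l_i\in E_1(0)$, lift to $Y_i=\xi^u(x_i)\times I$, use the measure-preserving coupling $\tau$ from Lemma~\ref{l.coupling}, and split the integral according to whether $R\le n/2$, applying (A) with the H\"older estimate on one piece and (B) on the other. Your write-up is in fact more explicit than the paper's on the reduction from $E(0)$ to $E_1(0)$ via convex combinations and weak$^*$ limits; the paper simply states ``It suffices to prove for every $l_i\in E_1(0)$'' and proceeds.
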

\begin{proof}
It suffices to prove for every $l_i\in E_1(0)$. We have
$$(\cT^nl_j)(\varphi)=\int_{Y_j}\varphi(f^n(y_j))dm_j(y_j,t_j).$$
Let $(y_2,t_2)=\tau(y_1,t_1)$. Then
$$\cT^n(l_1-l_2)(\varphi)\leq \int_{Y_1}|\varphi(f^n(y_1)-\varphi(f^n(y_2)))|dm_1(y_1,t_1).$$
Let $Z(n)=\{z: R(z)\leq \frac{n}{2}\}$ then
\begin{equation}
\begin{aligned}
&|\cT^n(l_1-l_2)(\varphi)|\\
&\leq\int_{Z(n)}|\varphi(f^n(y_1))-\varphi(f^n(y_2))|dm_1(y_1,t_1)+2\|\varphi\|_0m_1(Y_1\setminus Z(n))\\
&\leq\|\varphi\|_\gamma\big((C_1\rho_1^{\frac{n}{2}})^\gamma+2C_2\rho_2^{\frac{n}{2}}\big).
\end{aligned}
\end{equation}

\end{proof}

Replace $l_2$ by $\nu_f$, we have that
\begin{corollary}
For any $l\in E(0)$ and any $\varphi\in C^\gamma(M)$, $n>0$,
$$\big|\int \varphi(f^n(x))dl(x)-\nu(\varphi)\big|\leq C_3\rho_3^n\|\varphi\|_\gamma.$$

\end{corollary}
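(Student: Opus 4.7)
The plan is to apply Corollary~\ref{c.converging} directly, with $l_1 = l$ and $l_2 = \nu_f$. The only things to check are that $\nu_f$ is an admissible second argument and that the difference $\cT^n(l-\nu_f)(\varphi)$ can be rewritten into the form stated in the corollary.

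First I would verify that $\nu_f \in E(0)$. This is essentially the content of Lemma~\ref{l.Eunique}: the maximal measure $\nu_f$ has conditional measures $\nu^u_x$ along $\xi^u$ (by construction of the reference family) and thus fits into the definition of $E(0)$ with density $e^G \equiv 1$. Hence $l - \nu_f$ makes sense as a (signed) element for which the previous corollary applies, at least after the standard decomposition into elements of $E(0)$.

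Next, because $\nu_f$ is $f$-invariant, the transfer operator satisfies $\cT^n(\nu_f)=\nu_f$, that is, $\cT^n(\nu_f)(\varphi)=\int \varphi\circ f^n\,d\nu_f=\int\varphi\,d\nu_f=\nu_f(\varphi)$ for every $\varphi\in C^\gamma(\TT^3)$. By linearity of $\cT$ on signed measures we therefore have
\begin{equation*}
\cT^n(l-\nu_f)(\varphi)=\int \varphi\circ f^n\,dl-\nu_f(\varphi),
\end{equation*}
which is precisely the quantity we want to estimate.

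Finally I would apply Corollary~\ref{c.converging} with $l_1=l$ and $l_2=\nu_f$ to obtain
\begin{equation*}
\Bigl|\int \varphi\circ f^n\,dl-\nu_f(\varphi)\Bigr|=\bigl|\cT^n(l-\nu_f)(\varphi)\bigr|\le \|\cT^n(l-\nu_f)\|_\gamma\,\|\varphi\|_\gamma \le C_3\rho_3^n\|\varphi\|_\gamma,
\end{equation*}
which is the claimed bound. There is no real obstacle here: the statement is a direct corollary, and the only mild subtlety is bookkeeping $\nu_f\in E(0)$ together with its invariance so that the difference $\cT^n(l)-\nu_f$ can be absorbed into the hypothesis of the previous result.
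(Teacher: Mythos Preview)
Your proposal is correct and follows exactly the paper's approach: the paper simply says ``Replace $l_2$ by $\nu_f$'' before stating this corollary, and you have spelled out precisely the two facts that make this substitution work, namely $\nu_f\in E(0)$ (from Lemma~\ref{l.Eunique}) and $\cT^n\nu_f=\nu_f$ by invariance. There is nothing to add.
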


\subsection{Proof of the main results\label{ss.proofofmain}}
To prove Theorem~\ref{main}, we need the following lemma:
\begin{lemma}\label{l.aproximate}
For any $R_0>0$, there is $C_{R_0}$ such that for any $0<R<R_0$ and $l\in E(R)$ there exists $\tilde{l}\in E(0)$
such that $\|l- \tilde{l}\|_0 \leq C_{R_0} \cdot R.$
\end{lemma}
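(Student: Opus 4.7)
The plan is to first establish the bound for $l\in E_1(R)$, then extend it to $E_2(R)$ by convexity, and finally to $E(R)$ by a weak${}^*$ limit argument. For $l\in E_1(R)$ the natural approximation in $E(0)$ is simply $\tilde{l}:=\nu^u_x$, where $x$ is such that $\supp(l)\subset \xi^u(x)$; by definition $\nu^u_x\in E_1(0)\subset E(0)$.

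The key observation for the $E_1(R)$ case is the following: writing $l(\varphi)=\int_{\xi^u(x)}\varphi(z) e^{G(z)}d\nu^u_x(z)$ with $l(1)=1$, the function $e^G$ has $\nu^u_x$-mean equal to $1$. Since $\xi^u(x)$ is connected (it is an arc of an unstable leaf) and $G$ is H\"older, by the intermediate value theorem there exists $z_0\in\xi^u(x)$ with $G(z_0)=0$. The elements of the Markov partition $\cM$ have uniformly bounded diameter, so the plaques $\xi^u(x)$ have uniformly bounded diameter $D:=\sup_x\diam(\xi^u(x))^\gamma<\infty$. Combining this with the H\"older bound on $G$ gives $|G(z)|\leq RD$ for every $z\in\xi^u(x)$, whence $|e^{G(z)}-1|\leq RD\cdot e^{R_0 D}$ whenever $R\leq R_0$. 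Therefore, for every $\varphi\in C^0(\TT^3)$,
\begin{equation*}
|l(\varphi)-\tilde l(\varphi)|\leq \|\varphi\|_0\int_{\xi^u(x)}|e^{G(z)}-1|\,d\nu^u_x(z)\leq \|\varphi\|_0\cdot RD\, e^{R_0D},
\end{equation*}
so we may take $C_{R_0}:=D\, e^{R_0D}$.

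For $l\in E_2(R)$, write $l=\sum_i\lambda_i l_i$ with $l_i\in E_1(R)$ and $\lambda_i\geq 0$, $\sum\lambda_i=1$, and let $\tilde l_i\in E(0)$ be the approximations constructed above. Set $\tilde l:=\sum_i\lambda_i\tilde l_i$; since $E(0)$ is convex (being the weak${}^*$ closure of a convex set) we have $\tilde l\in E(0)$, and the triangle inequality yields $\|l-\tilde l\|_0\leq C_{R_0}R$. Finally, for $l\in E(R)$ choose $l_n\in E_2(R)$ with $l_n\to l$ in the weak${}^*$ topology and corresponding $\tilde l_n\in E(0)$ with $\|l_n-\tilde l_n\|_0\leq C_{R_0}R$. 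By weak${}^*$ compactness of the space of probability measures on $\TT^3$ and closedness of $E(0)$, a subsequence of $\{\tilde l_n\}$ converges weak${}^*$ to some $\tilde l\in E(0)$. For any fixed $\varphi\in C^0(\TT^3)$ with $\|\varphi\|_0\leq 1$, passing to the limit in the inequality $|l_n(\varphi)-\tilde l_n(\varphi)|\leq C_{R_0}R$ yields $|l(\varphi)-\tilde l(\varphi)|\leq C_{R_0}R$, hence $\|l-\tilde l\|_0\leq C_{R_0}R$.

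The only nontrivial step is the $E_1(R)$ estimate; the rest is routine. The main point that needs care is the uniform diameter bound on the plaques $\xi^u(x)$, which is precisely guaranteed by the construction of $\xi^u$ from a Markov partition $\cM$ of $\TT^3$. Once that is in hand, the a priori pointwise bound $|G|\leq RD$ (rather than only oscillation control) is the essential consequence of $l$ being a probability measure, and everything else follows formally.
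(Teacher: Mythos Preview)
Your proof is correct and follows essentially the same approach as the paper: take $\tilde l=\nu^u_x$, use the normalization $\int e^G\,d\nu^u_x=1$ together with the H\"older bound and bounded plaque diameter to get a pointwise bound $|G|\leq RD$, and then estimate $|e^G-1|$. The paper's proof is more terse---it simply says ``it suffices to prove for $l\in E_1(R)$'' without spelling out the convexity and weak${}^*$ limit steps that you carefully supply; your treatment of those reductions is a welcome clarification.
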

\begin{proof}
It suffices to prove for $l\in E_1(R)$. By the definition of $E_1(R)$, there is a function $G$ such that
$l(\phi)=\int_{\xi^u(x)}\phi(z)e^{G(z)}d\nu^u_x(z)$,
where $\phi\in C^0(\TT^3)$, $l(1)=1$ and $|G(z_1)-G(z_2)|\leq R d^\gamma(z_1,z_2)$ for any $z_1,z_2\in \xi^u(x)$.
We may assume the diameter of every $\xi^u(\cdot)$ is less than one, then
\begin{equation}\label{eq.compare}
\frac{1}{R}\leq \frac{e^{G(y)}}{e^{G(z)}}\leq R \text{  for every $y,z\in \xi^u(x)$}.
\end{equation}
Because $\int e^{G(z)} \nu^u_x =1$, $\min G\mid _{\xi^u(x)}\leq 1 \leq \max G\mid _{\xi^u(x)}$.
By \eqref{eq.compare}, $\frac{1}{R}\leq G\mid _{\xi^u(x)}\leq R$. Thus
$|\int \phi(z) d\nu^u_x(z)-\int \phi(z)e^{G(z)}d\nu^u_x(z)|\leq \|\phi\|_0 \int |1-e^{G(z)}|d\nu^u_x(z)< C_{R_0} R \|\phi\|_0$,
where $C_{R_0}$ is the constant such that $|e^a-1|<C_{R_0} a$ for any $0<a<R_0$.

\end{proof}

\begin{proof}[Proof of Theorem~\ref{main}]
Consider $l\in E(R)$. By Proposition~\ref{p.Econtracting}  and Lemma~\ref{l.aproximate}, there exist $C_R>0$ and $\tilde{l}\in E(0)$ such that
$$\|\cT^{\frac{n}{2}}l- \tilde{l}\|_\gamma \leq C_R \cdot e^{\frac{-\lambda_5 \gamma n}{2}}.$$
Hence, by Corollary~\ref{c.converging}, there is $0<\tau=\max\{e^{\frac{-\lambda_5 \gamma}{2}}, \rho_3\}<1$ such that
$$\|\cT^n l -\nu_f\|_\gamma \leq C_R \cdot e^{\frac{-\lambda_5 \gamma n}{2}}+\|\cT^{\frac{n}{2}}\tilde{l}-\nu_f\|_\gamma\leq C \tau^n.$$

To finish the proof of Theorem~\ref{main}, one only need take $l=\phi \cdot \nu_f$.

\end{proof}

\section{Coupling Algorithm\label{s.coupling}}
In this section we will define $\tau$ and $R$ in Lemma~\ref{l.coupling}.  The arguments of this section are similar to the discussion of \cite{D}[Sections 7,8 and 9]. More precisely, in Subsection \ref{ss.algorithm} we describe the coupling algorithm; in Subsection \ref{ss.firstrun} we describe the first run. These two subsections are parallel to \cite{D}[Sectioon 7]. In Subsection \ref{ss.measurepreserving} we prove (A) of Lemma~\ref{l.coupling}, and in
Subsection \ref{ss.coupling} we prove (B) of Lemma~\ref{l.coupling} in a different argument compares to \cite{D}[Section 9], which is from \cite{Young}.

Although the argument here mainly comes from \cite{D}, the discussion here is simpler. The main differences are: in the first run, we only cut the second coordinate at the step $n_0$ (see Remark~\ref{r.firstrun}); moreover, we will show by construction that after the first run, we define the map between subsets of positive measure (Lemma~\ref{l.positivemeasure}).
The reasons are because our reference measures have some `good' properties:
\begin{itemize}
\item the conditional measure $\nu^u_{\cdot}$ of $\nu_f$ along the partition $\xi^u$
      is invariant under center-stable holonomy map $\cH^{cs}_{\cdot,\cdot}$ (Proposition~\ref{p.productstructure});

\item  the Jacobian of $f$ with respect to $\nu^u$ is piecewise constant:
for every $x\in M$, $\frac{df_*(\nu^u_{x})\mid_{\xi^u(f(x))}}{d\nu^u_{f(x)}}(f(x))=\nu^u_x(f^{-1}\xi^u(f(x)))$ (Lemma~\ref{l.Jac}).
\end{itemize}

Let $\mathcal{Y}$ and $\tilde{\mathcal{Y}}$ be
the set of rectangles $Y=\xi^u(x)\times I$, where $I\subset [0,1]$, with the measure $m=\nu^u_x\times dt$. We write $f(x,t)=(f(x),t)$. For $Y_1\in \mathcal{Y}$ and $Y_2\in \tilde{\mathcal{Y}}$ and the corresponding measures $m_1$ and $m_2$ respectively, such that $m_1(Y_1)=m_2(Y_2)$, we give an algorithm defining $\tau$ and $R$. This algorithm will depend on three positive parameters, $K, \lambda$ and $\vep$.

Applying Corollary~\ref{c.largedeviation} on function $\phi =\log |df|_{E^c}|$, we obtain constants
$s$ and $\theta_1$ such that
for any $n>0$, and any $x\in \TT^3$, denote by $f^n(\xi^u(x))=\cup_j\xi^u(x_j)$ and
$c_j=\nu^u_x(f^{-n}(\xi^u(x_j)))$, then
\begin{equation}\label{eq.exponentialtail}
\sum_j c_j\|df^n\mid E^c\|_{f^{-n}(\xi^u(x_j))}\leq \theta_1^n.
\end{equation}
Let $\lambda>0$ be small enough such that $-\lambda^c(\nu_f)/4>\lambda$ and $e^{-\lambda s}>\theta_1$.

By the uniform contraction of $f^{-1}$ restricted on $\xi^u(\cdot)$, we have

\begin{lemma}\label{l.neighbofhoodofgamma}
There is $K_0>0$ such that for any $n\geq 0$ and $y,z\in f^{-n}(\xi^u(f^n(x)))$,
$$\frac{\|df^n\mid_{E^{cs}(y)}\|}{\|df^n\mid_{E^{cs}(z)}\|}\leq K_0 e^{-n\lambda^c(\nu_f)/4}.$$
\end{lemma}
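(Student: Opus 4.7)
The plan is to decompose the log-ratio of the derivatives along the orbits of $y$ and $z$ into a telescoping sum, and then exploit uniform continuity of $\log\|df\mid_{E^c}\|$ together with uniform contraction of $f^{-1}$ along unstable leaves. First, by Remark~\ref{r.hyperbolicity} I may assume $E^s,E^c,E^u$ are mutually orthogonal, so that \eqref{eq.orthogonal} gives $\|df^n\mid_{E^{cs}(p)}\|=\|df^n\mid_{E^{c}(p)}\|$. Since $E^c$ is one-dimensional, this equals $\prod_{i=0}^{n-1}\|df\mid_{E^c(f^i(p))}\|$, and therefore
\[
\log\frac{\|df^n\mid_{E^{cs}(y)}\|}{\|df^n\mid_{E^{cs}(z)}\|}
=\sum_{i=0}^{n-1}\bigl(\log\|df\mid_{E^c(f^i(y))}\|-\log\|df\mid_{E^c(f^i(z))}\|\bigr).
\]

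Next I would estimate each summand. Since $y,z\in f^{-n}(\xi^u(f^n(x)))$, the iterates $f^i(y),f^i(z)$ lie on the same unstable leaf for every $0\leq i\leq n$, with $f^n(y),f^n(z)$ both contained in the plaque $\xi^u(f^n(x))$. By pointwise partial hyperbolicity of $f$ and compactness of $\TT^3$, the constant $\mu:=\exp(\inf_x \lambda_5(x))>1$ is a uniform lower bound on the expansion of $df\mid_{E^u}$, so $f^{-1}$ contracts unstable distances by at least $\mu^{-1}$; combined with the uniform bound $D:=\sup_{p\in\TT^3}\diam(\xi^u(p))<\infty$ (finite since $\cM$ has finitely many elements inside $\TT^3$), this yields $d(f^i(y),f^i(z))\leq D\mu^{-(n-i)}$ for every $0\leq i\leq n$.

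The main obstacle is that in the $C^1$ setting I have no H\"older, not even Dini, modulus for $\log\|df\mid_{E^c}\|$, only uniform continuity; this is precisely why the target bound is weakened to allow an exponentially growing factor rather than a uniform constant. Let $\omega:[0,\infty)\to[0,\infty)$ denote a modulus of continuity for this function on $\TT^3$, with $\omega(\delta)\to 0$ as $\delta\to 0$; then each summand above is bounded in absolute value by $\omega(D\mu^{-(n-i)})$. Setting $\epsilon:=-\lambda^c(\nu_f)/4>0$ and choosing $j_0$ with $\omega(D\mu^{-j})<\epsilon$ for $j>j_0$, I get
\[
\left|\log\frac{\|df^n\mid_{E^{cs}(y)}\|}{\|df^n\mid_{E^{cs}(z)}\|}\right|
\leq \sum_{j=1}^{n}\omega(D\mu^{-j})
\leq M_0 + n\epsilon,
\]
where $M_0:=\sum_{j=1}^{j_0}\omega(D\mu^{-j})$ depends only on $f$. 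Exponentiating and taking $K_0:=e^{M_0}$ produces the claimed estimate. This is precisely the $C^1$ substitute for the usual bounded distortion along unstables referenced in the introduction: we lose the uniform constant and only pay an exponential factor with rate $-\lambda^c(\nu_f)/4$, which is strictly smaller than the contraction rate $-\lambda^c(\nu_f)/2$ obtained in Proposition~\ref{p.hypobolicity}, so the loss will be absorbed by the coupling argument in the sequel.
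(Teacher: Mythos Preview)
Your proof is correct and is precisely the detailed version of what the paper indicates in a single line (``By the uniform contraction of $f^{-1}$ restricted on $\xi^u(\cdot)$''): the backward contraction along unstable plaques forces $d(f^i(y),f^i(z))$ to be small for most $i$, and uniform continuity of $\log\|df\mid_{E^c}\|$ then converts this into the telescoping bound $M_0+n\epsilon$ with $\epsilon=-\lambda^c(\nu_f)/4$. There is no difference in strategy, only in level of detail.
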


By Corollary~\ref{c.largedeviation},
take $K>K_0$ large enough such that
\begin{equation}\label{eq.quasihyperbolicset}
q_1=\max_{x} \nu^u_x(U(\xi^u(x)))<1,
\end{equation}
where $U(\xi^u(x))=\{y\in \xi^u(x): \exists n>0 \text{ and } z\in (f^{-n}\xi^u)(y) \text{ such that }\\ \|(df^n\mid_{E^c})(z)\|\geq K e^{-\lambda n}\}$.

\begin{remark}\label{r.gamma}
Recall that by Proposition~\ref{p.hypobolicity} and Remark~\ref{r.hyperbolicity}, there is $a_0>0$ such that for any
point $x\in \TT^3$, there is a set $\Gamma_{x}\subset \xi^u(x)$ with $\xi^u_x(\Gamma_x)>a_0$ and for every $y\in \Gamma_{x}$,
\begin{equation*}
\|df^n\mid_{E^{cs}(y)}\|\leq e^{n\lambda^c(\nu_f)/2}  \text{ for any $n\geq 0$}.
\end{equation*}
Moreover, by Lemma~\ref{l.neighbofhoodofgamma},
$\Gamma_x$ belongs to the complement of $U(\xi^u(x))$.
\end{remark}

There is $\delta>0$ such that if $d(x,y)<\delta$, then $\|(df\mid_{E^{cs}})(y)\|\leq e^{\lambda/2}\|(df\mid_{E^{cs}})(x)\|$. Let $\vep\leq \frac{\delta}{2K}$.

\subsection{Algorithm\label{ss.algorithm}}
The algorithm will work recursively. In the first run, we define the map between $P^\infty_j$ of $Y_j$. For the points where $\tau$ is
not defined, we define a stopping time $s(y)$ such that the set $P_j^n=\{y\in Y_j: s(y)=n\}$ is
of the form  $f^{-n}(\bigcup_kY_{jnk})$ where $Y_{1nk}= \xi^u(x_{1nk})\times I_{1nk}\in \mathcal{Y}$, $Y_{2nk}= \xi^u(x_{2nk})\times I_{2nk}\in \tilde{\mathcal{Y}}$ and $m_1(P^n_1)=m_2(P^n_2)$.

Then we can use our algorithm again to couple $P_1^n$ to $P_2^n$.
We first chop each $Y_{jnk}$ into several pieces along the second coordinate so that the resulting collection $\{\overline{Y}_{jnl}\}$ satisfies $\cup_k Y_{jnk}=\cup_l \overline{Y}_{jnl}$ and $m_1(\overline{Y}_{1nl})=m_2(\overline{Y}_{2nl})$.

Let $f^{-n}(\overline{Y}_{jnl})=U_{jnl}\times I_{jnl}$. Denote $c_{jnl}=\nu^u_{x_j}(U_{jnl})$. Let $\Delta_{jnl}$ be the map
$\Delta_{jnl}(x,t)=(f^n(x),r_{jnl}(t))$ where $r_{jnl}$ is the affine isomorphism between $I_{jnl}$ and $[0,c_{jnl}|I_{jnl}|]$.
We now call our algorithm recursively to produce maps $\tau_{nl}:\Delta(f^{-n}(\overline{Y}_{1nl}))\to \Delta(f^{-n}(\overline{Y}_{2nl}))$
and $R_{nl}: \Delta(f^{-n}(\overline{Y}_{1nl}))\to \mathbb{N}$ satisfying the conditions of Lemma~\ref{l.coupling}.
We set
$$\tau(x,t) =\left\{
  \begin{array}{ll}
    \tau_{\text{first run}}(x,t), & \hbox{if $(x,t)\in P_1^\infty$;} \\
    \Delta_{2nl}^{-1}\circ\tau_{nl}\circ \Delta_{1nl}, & \hbox{if $(x,t)\in f^{-n}(\overline{Y}_{1nl})$.}
  \end{array}
\right.
$$
$$R(x,t) =\left\{
  \begin{array}{ll}
    R_{\text{first run}}(x,t), & \hbox{if $(x,t)\in P_1^\infty$;} \\
    n+R_{nl}(\Delta_{1nl}(x,t)) & \hbox{if $(x,t)\in f^{-n}(\overline{Y}_{1nl})$.}
  \end{array}
\right.
$$
\subsection{First run\label{ss.firstrun}}
Let us now describe the first run of our algorithm.

Because $\supp(\nu_f)$ is a $\cF^u$ minimal component (Proposition \ref{p.hypobolicity}), there exist $n_0$
and points $x_{j1}\in f^{n_0}(\xi^u(x_j))$ for $j=1,2$ such that
$$d_{cs}(y,\cH^{cs}_{x_{11},x_{21}}(y))\leq \vep,$$
for any $y\in \xi^u(x_{11})$, where $d_{cs}$ denotes the distance inside each $cs$ leaf.
Let $\hat{c}_j=\nu^u_{x_j}(f^{-n_0}(\xi^u(x_{j1})))$, $(\overline{t}_1,\overline{t}_2)=(\frac{\hat{c}_2}{\hat{c}_1},1)$
if $\hat{c}_2\leq \hat{c}_1$, and $(\overline{t}_1,\overline{t}_2)=(1,\frac{\hat{c}_1}{\hat{c}_2})$
if $\hat{c}_1\leq \hat{c}_2$. Define $\overline{Y}_j=\xi^u(x_{j1})\times [0,\overline{t}_j]$. Let $s((y,t))=n_0$ for
points of $Y_j\setminus f^{-n_0}(\overline{Y}_j)$.

We now proceed to define $P^n_j$ inductively for $n>n_0$. Let $Q^{n-1}_j=Y_j\setminus \cup_{m=n_0}^{n-1}P_j^m$.
We assume by induction that
$f^{n-1}(Q^{n-1}_j)=\bigcup_k Y_{jk(n-1)}$ where
$$Y_{jk(n-1)}= V_{jk(n-1)}\times [0,\overline{t}_j]= \xi^u(x_{jk(n-1)}))\times [0,\overline{t}_j],$$
\begin{equation}\label{eq.matching}
m_1(f^{-(n-1)}(Y_{1k(n-1)}))=m_2(f^{-(n-1)}(Y_{2k(n-1)})),
\end{equation}
$\cH^{cs}_{x_{1k(n-1)},x_{2k(n-1)}}(\xi^u(x_{1k(n-1)}))=\xi^u(x_{2k(n-1)})$ and
\begin{equation}\label{eq.csdistance}
d(x,\cH^{cs}_{x_{1k(n-1)},x_{2k(n-1)}}(x))\leq r_{n-1} \text{ for any $x \in \xi^u(x_{1k(n-1)})$},
\end{equation}
where $r_n=K\vep e^{-\lambda n/2}$.

Because the partition $\{\xi^u(\cdot)\}$ is increasing, we can write
$$f(\xi^u(x_{jk(n-1)}))=\cup_l \xi^u(x_{jlkn}) \text{ for } j=1,2, $$
such that
$\cH^{cs}_{x_{1lkn},x_{2lkn}}(\xi^u(x_{1lkn}))=\xi^u(x_{2lkn})$.
Let
$$\beta_{lkn}=\|df^{n-n_0}\mid_{E^c({f^{-(n-n_0)}(\xi^u(x_{1lkn}))}}\|.$$
If $\beta_{lkn}>Ke^{-\lambda (n-n_0)}$ let $s(y)=n$ on $f^{-n}(\xi^u_{1lkn})\times [0, \overline{t}_j]$. Otherwise
let $Y_{jlkn}=\xi^u(x_{jlkn})\times [0,\overline{t}_j]$.

To complete the first run, we still need to show the following two lemmas:
\begin{lemma}\label{l.hyperbolictime}
$d(f^n(x),\cH^{cs}_{x_{1jkn},x_{2jkn}}(f^n(x)))\leq r_{n}$ for any $(x,t)\in Y_{1jkn}$,
\end{lemma}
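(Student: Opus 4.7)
The plan is to argue by induction on $n \geq n_0$. The base case $n = n_0$ is built into the first run: by the choice of $n_0$ and $x_{j1}$, every $y \in \xi^u(x_{11})$ satisfies $d(y, \cH^{cs}_{x_{11}, x_{21}}(y)) \leq \vep$, which is at most $r_{n_0}$ once $K$ is taken large enough.

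For the inductive step at $n$, let $x \in \xi^u(x_{1jkn})$ be the point associated to $(x,t) \in Y_{1jkn}$, and define $z_m := f^{-(n-m)}(x) \in \xi^u(x_{1\cdot m})$ for $n_0 \leq m \leq n$ (so $z_n = x$ and $z_{n_0}$ lies in the initial cell $\xi^u(x_{11})$). Let $z_m' := \cH^{cs}(z_m)$ denote its cs-partner on the matched leaf $\xi^u(x_{2\cdot m})$; by the $f$-invariance of the center-stable foliation $f(z_m') = z_{m+1}'$, and the base case gives $d(z_{n_0}, z_{n_0}') \leq \vep$.

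The single-step estimate follows from the mean value theorem along the cs-segment from $z_m$ to $z_m'$:
\[ d(z_{m+1}, z_{m+1}') \leq \sup_\xi \|df\mid_{E^{cs}}(\xi)\| \cdot d(z_m, z_m'), \]
the supremum being taken over that segment. So long as $d(z_m, z_m') < \delta$, the $\delta$-regularity reduces this to $e^{\lambda/2} \|df\mid_{E^{cs}}(z_m)\| \cdot d(z_m, z_m')$. Iterating from $m = n_0$ up to $m = n-1$, and using Remark~\ref{r.hyperbolicity} to replace $\|df\mid_{E^{cs}}\|$ by $\|df\mid_{E^c}\|$ (after orthogonalizing the splitting), we obtain
\[ d(z_n, z_n') \leq e^{(n - n_0)\lambda/2} \cdot \|df^{n-n_0}\mid_{E^c}(z_{n_0})\| \cdot \vep. \]
Since $z_{n_0} \in f^{-(n-n_0)}(\xi^u(x_{1jkn}))$ and the cell $\xi^u(x_{1jkn})$ was not declared a stopping cell by the algorithm, $\|df^{n-n_0}\mid_{E^c}(z_{n_0})\| \leq \beta_{lkn} \leq K e^{-\lambda(n-n_0)}$. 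Combining these gives $d(z_n, z_n') \leq K\vep e^{-(n-n_0)\lambda/2}$, which is the claimed bound $r_n$ after absorbing the $e^{\lambda n_0/2}$ factor into the (freely chosen) constant $K$.

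The main technical point is the iterated applicability of the $\delta$-condition: one must know $d(z_k, z_k') < \delta$ at every intermediate step $n_0 \leq k < n$. This is handled by a continuation argument using the very bound just derived, since for any $k$ at which the $\delta$-condition has held up to step $k$, the same computation yields $d(z_k, z_k') \leq K\vep e^{-(k-n_0)\lambda/2} \leq K\vep \leq \delta/2$ by the choice $\vep \leq \delta/(2K)$, so the condition persists and the induction closes without further dynamical input.
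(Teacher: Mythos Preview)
Your argument is correct and is essentially an inline proof of the lemma the paper quotes from \cite[Lemma~8.1]{D}: the paper simply invokes that black box (if $\|df^j\mid_{E^c}(x)\|\leq Ke^{-\lambda j}$ for all $0\le j<n$ then $f^j(\cF^{cs}_\vep(x))\subset\cF^{cs}_{r_j}(f^j(x))$), whereas you reprove it via the single-step $\delta$-continuity estimate and a continuation argument. One point should be made explicit: in your continuation step at an intermediate level $k$, the ``same computation'' requires the bound $\|df^{k-n_0}\mid_{E^c}(z_{n_0})\|\leq K e^{-\lambda(k-n_0)}$, and this holds not because of the level-$n$ condition you cite but because the cell containing $z_k$ was \emph{also} not declared a stopping cell at level $k$ (having survived all the way to $n$); once you say this, the induction closes exactly as you describe.
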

\begin{proof}
This result is a corollary of the following lemma:

\begin{lemma}\cite[Lemma 8.1]{D}\label{l.stablemanifold}
If $x\in M$ and $n>0$ are such that for any $0\leq j< n$, $(df^j\mid_{E^c})(x)\leq K e^{-\lambda j}$ then for any
$0\leq j\leq n$,
$$f^j(\cF^{cs}_{\vep}(x_0))\subset \cF^{cs}_{r_j}(f^j(x_0)),$$
where $r_j=K\vep e^{-\lambda n/2}$.
\end{lemma}

\end{proof}

\begin{lemma}\label{l.volume}
\begin{equation}\label{eq.height}
m_1(f^{-n}(Y_{1jkn}))=m_2(f^{-n}(Y_{2jkn})).
\end{equation}
\end{lemma}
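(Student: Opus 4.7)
The plan is to prove equation~\eqref{eq.height} by induction on $n$, using the fact established earlier for $n-1$ (equation~\eqref{eq.matching}), together with the Markov property of $\xi^u$, the Jacobian formula of Lemma~\ref{l.Jac}, and the center-stable holonomy invariance of the conditional measures $\nu^u_{(\cdot)}$ with Jacobian $1$ (Proposition~\ref{p.productstructure}).

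\medskip

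The base case $n=n_0$ follows directly from the construction of the heights $\overline{t}_j$. Indeed, by definition of $\overline{Y}_j=\xi^u(x_{j1})\times[0,\overline{t}_j]$ and the choice $(\overline{t}_1,\overline{t}_2)$, one has $\hat c_1 \overline{t}_1 = \hat c_2 \overline{t}_2$ with $\hat c_j=\nu^u_{x_j}(f^{-n_0}(\xi^u(x_{j1})))$, hence $m_1(f^{-n_0}(\overline{Y}_1))=m_2(f^{-n_0}(\overline{Y}_2))$.

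\medskip

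For the inductive step, write $c_j:=\nu^u_{x_j}(f^{-(n-1)}(\xi^u(x_{jk(n-1)})))$ so that the induction hypothesis reads $c_1\overline{t}_1=c_2\overline{t}_2$. Since $\xi^u$ is a Markov partition, $f^{-(n-1)}$ maps $\xi^u(x_{jk(n-1)})$ bijectively onto a subset of $\xi^u(x_j)$. Applying Lemma~\ref{l.Jac} iteratively (equivalently, using the uniqueness of disintegration) gives
$$
\nu^u_{x_j}\bigl(f^{-n}(\xi^u(x_{jlkn}))\bigr) \;=\; c_j \cdot \nu^u_{x_{jk(n-1)}}\bigl(f^{-1}(\xi^u(x_{jlkn}))\bigr).
$$
Multiplying by $\overline{t}_j$ and using $c_1\overline{t}_1=c_2\overline{t}_2$, it suffices to prove the center-stable holonomy invariance
$$
\nu^u_{x_{1k(n-1)}}\bigl(f^{-1}(\xi^u(x_{1lkn}))\bigr) = \nu^u_{x_{2k(n-1)}}\bigl(f^{-1}(\xi^u(x_{2lkn}))\bigr).
$$

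\medskip

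For this last equality, the key point is that $f$ preserves the center-stable foliation, so the cs-holonomy commutes with $f$ in the following sense: if $y \in f^{-1}(\xi^u(x_{1lkn})) \subset \xi^u(x_{1k(n-1)})$ and $y' := \cH^{cs}_{x_{1k(n-1)},x_{2k(n-1)}}(y) \in \xi^u(x_{2k(n-1)})$, then $f(y')$ lies on the same cs-leaf as $f(y)\in \xi^u(x_{1lkn})$, and also on $f(\xi^u(x_{2k(n-1)}))=\bigcup_{l'}\xi^u(x_{2l'kn})$; since the assumption $\cH^{cs}_{x_{1lkn},x_{2lkn}}(\xi^u(x_{1lkn}))=\xi^u(x_{2lkn})$ identifies the $l'=l$ component as the unique one carrying the cs-image of $f(y)$, we obtain $\cH^{cs}_{x_{1k(n-1)},x_{2k(n-1)}}(f^{-1}(\xi^u(x_{1lkn}))) = f^{-1}(\xi^u(x_{2lkn}))$. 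Proposition~\ref{p.productstructure} (Jacobian $1$ for cs-holonomy) then yields the desired equality of conditional measures, completing the induction. The only subtle point in the argument is verifying this commutativity/matching of the pullback pieces under the cs-holonomy, but it is entirely formal once one recalls that $f$ preserves $\cF^{cs}$ and that the partition $\xi^u$ was built via the Markov partition for $A$ pulled back through the semiconjugacy.
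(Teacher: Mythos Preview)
Your proof is correct and uses exactly the same ingredients as the paper's: Lemma~\ref{l.Jac} for the Jacobian computation, Proposition~\ref{p.productstructure} for the Jacobian~$1$ of the center-stable holonomy, and the fact that $f$ preserves $\cF^{cs}$ so that the cs-holonomy carries the pullback pieces to each other. The only organizational difference is that you argue by one-step induction ($n-1\to n$, using the holonomy between $\xi^u(x_{1k(n-1)})$ and $\xi^u(x_{2k(n-1)})$), whereas the paper compresses the argument into a single direct computation from the base level $n_0$ to $n$ (using the holonomy between $\xi^u(x_{11})$ and $\xi^u(x_{21})$ and the $(n-n_0)$-fold pullback); these are equivalent formulations of the same proof.
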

\begin{proof}
Because $\frac{\overline{t}_1}{\overline{t}_2}=\frac{\hat{c}_2}{\hat{c}_1}=
\frac{\nu^u_{x_2}(f^{-n_0}(\xi^u(x_{21})))}{\nu^u_{x_1}(f^{-n_0}(\xi^u(x_{11})))}$ and by definition
$Y_{jlkn}=\xi^u(x_{jlkn})\times [0,\overline{t}_j]$, the lemma follows
from the fact that
$$\frac{\nu^u_{x_1}(f^{-n}(\xi^u(x_{1lkn})))}{\nu^u_{x_2}(f^{-n}(\xi^u(x_{2lkn})))}=
\frac{\nu^u_{x_1}(f^{-n_0}(\xi^u(x_{11})))}{\nu^u_{x_2}(f^{-n_0}(\xi^u(x_{21})))}$$
Observe that by Lemma~\ref{l.Jac}, for $j=1,2$,
$$\frac{\nu^u_{x_j}(f^{-n}(\xi^u(x_{jlkn})))}{\nu^u_{x_j}(f^{-n_0}(\xi^u(x_{j1})))}=\nu^u_{x_{j1}}(f^{-(n-n_0)}(\xi^u(x_{jlkn}))).$$

Then this lemma is a corollary of the fact that $(\cH^{cs}_{x_{11},x_{21}})_*(\nu^u_{x_{11}})=\nu^u_{x_{21}}$
(Proposition \ref{p.productstructure}), and $\cH^{cs}_{x_{11},x_{21}}(f^{-(n-n_0)}(\xi^u(x_{1lkn})))=f^{-(n-n_0)}(\xi^u(x_{2lkn}))$.
\end{proof}

\begin{remark}\label{r.firstrun}
By the above construction, in each run, $\tau$ is not defined on $(x,t)\in Y_1$ for $s(x)=n$
for three reasons:
\begin{itemize}
\item[(a)] $n=n_0$ and $f^{n_0}(x)\notin \xi^u(x_{11})$;

\item[(b)] $n=n_0$ and $f^{n_0}(x)\in \xi^u_{x_{11}}$, but $t>\overline{t}_1$;

\item[(c)] In the step $n>n_0$,
$\max\limits_{y \in f^{-(n-n_0)}(\xi^u(x))}\|df^n\mid_{E^c}(y)\|>Ke^{-\lambda (n-n_0)}$.
\end{itemize}
We only cut the height $I$ to $\overline{t}_1$ in the step $n=n_0$. Thus
$P^\infty_1=Y_1\setminus \cup_n P^n_1$ is a union of vertical intervals of the form $(x,[0,\overline{t}_1])$.
\end{remark}

We define $\tau:P^\infty_1\to P^\infty_2$ such that for any $(x,t)\in P^\infty_1$, denote by
$$y=f^{-n_0}\circ \cH^{cs}_{x_{11},x_{21}}\circ f^{n_0} (x), \text{ then}$$
\begin{equation}\label{eq.definitioncoupling}
\tau((x,t))=(y,\frac{\hat{c}_1}{\hat{c}_2} t) \text{  and   } R(x,t)=n_0.
\end{equation}

We need to verify four things:
\begin{itemize}
\item[(I)] $\tau$ is defined on a set of whole measure in $Y_1$;
\item[(II)] $\tau$ is measure preserving;
\item[(III)] $\tau$ satisfies (A) of Lemma~\ref{l.coupling};
\item[(IV)] $\tau$ satisfies (B) of Lemma~\ref{l.coupling}.
\end{itemize}

\subsection{Measure preserving\label{ss.measurepreserving}}

In this subsection, we verify properties (I), (II) and (III) above.

The property (I) is a sequence of the following lemma:

\begin{lemma}\label{l.positivemeasure}
There is $a>0$ does not depend on $x_1,x_2$ such that $m_1(P^\infty_1)>a$.
\end{lemma}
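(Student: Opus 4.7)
The plan is to decompose $Y_1 \setminus P_1^\infty$ according to the three failure modes (a), (b), (c) of Remark~\ref{r.firstrun} and to bound each loss in a way that is uniform in $x_1, x_2$. The set retained after eliminating failures (a) and (b) is exactly $f^{-n_0}(\xi^u(x_{11})) \times [0, \overline{t}_1]$, whose $m_1$-mass equals $\hat c_1 \overline{t}_1 = \min(\hat c_1, \hat c_2)$ by the very definition of $\overline{t}_1$.

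For step (c), I would identify the union over $n > n_0$ of discarded sets with $f^{-n_0}(U(\xi^u(x_{11}))) \times [0, \overline{t}_1]$, where $U$ is the quasi-hyperbolic bad set from \eqref{eq.quasihyperbolicset}. Indeed, a point $(x,t)$ is stopped at some step $n > n_0$ precisely when $y = f^{n_0}(x)$ lies in an element $(f^{-(n-n_0)}\xi^u)(y) \subset \xi^u(x_{11})$ on which $\|df^{n-n_0}\mid_{E^c}\|$ exceeds $Ke^{-\lambda(n-n_0)}$, which is verbatim the condition $y \in U(\xi^u(x_{11}))$. Since $\nu^u_{x_{11}}(U(\xi^u(x_{11}))) \le q_1 < 1$ by \eqref{eq.quasihyperbolicset}, and iterating Lemma~\ref{l.Jac} $n_0$ times yields $\nu^u_{x_1}(f^{-n_0}B) = \hat c_1 \cdot \nu^u_{x_{11}}(B)$ for any $B \subset \xi^u(x_{11})$, the step-(c) loss has $m_1$-mass at most $q_1 \hat c_1 \overline{t}_1 = q_1 \min(\hat c_1, \hat c_2)$. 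Combining gives $m_1(P_1^\infty) \ge (1 - q_1)\min(\hat c_1, \hat c_2)$.

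Finally, the iterated Jacobian formula expresses $\hat c_j$ as a product of $n_0$ factors $\nu^u_{f^i(z)}(f^{-1}\xi^u(f^{i+1}(z)))$, each bounded below by $a_1 > 0$ from Remark~\ref{r.lowboundary}, so $\hat c_j \ge a_1^{n_0}$. The critical point is that $n_0$ is chosen uniformly in $x_1, x_2$: the $\cF^u$-minimality of $\supp(\nu_f)$ (Proposition~\ref{p.hypobolicity}) together with compactness produces a single integer $n_0 = n_0(\vep)$ such that $f^{n_0}(\xi^u(x))$ is $\vep/2$-dense in $\supp(\nu_f)$ for every $x \in \supp(\nu_f)$, which allows the selection of $x_{j1}$ with the required proximity property at this fixed $n_0$. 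Setting $a = (1-q_1) a_1^{n_0}$ then yields the desired uniform lower bound.

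The most delicate step will be the identification of the total step-(c) discard with the clean set $f^{-n_0}(U(\xi^u(x_{11})))$: one must chase the inductive indexing so that as $n$ runs over $n > n_0$ and the indices $l,k$ range over bad configurations, the sets $f^{-n}(\xi^u(x_{1lkn}))$ reassemble exactly into the $f^{-n_0}$ preimage of $U(\xi^u(x_{11}))$. Once this bookkeeping is settled, the remainder is a direct application of the piecewise-constant Jacobian and of the uniform bound $q_1 < 1$.
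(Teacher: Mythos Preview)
Your argument is correct and follows essentially the same route as the paper. Both proofs rest on the identification that, within $f^{-n_0}(\xi^u(x_{11}))\times[0,\overline t_1]$, the points discarded by step~(c) are exactly the $f^{-n_0}$-preimage of the quasi-hyperbolic bad set $U(\xi^u(x_{11}))$, and then use the piecewise-constant Jacobian (Lemma~\ref{l.Jac} and Remark~\ref{r.lowboundary}) to bound $\hat c_j$ from below by $a_1^{n_0}$. The only difference is cosmetic: the paper produces a concrete good subset $\Gamma_{x_{11}}\subset U(\xi^u(x_{11}))^c$ of $\nu^u$-measure $>a_0$ via Proposition~\ref{p.hypobolicity} and Remark~\ref{r.gamma}, whereas you work directly with the full complement $U(\xi^u(x_{11}))^c$ and the bound $\nu^u_{x_{11}}(U)\le q_1<1$ from \eqref{eq.quasihyperbolicset}, obtaining the constant $(1-q_1)a_1^{n_0}$ instead of $a_0\,a_1^{n_0}\,t_0$. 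Your version is slightly more direct, and your explicit justification of the uniformity of $n_0$ via $\cF^u$-minimality and compactness is a point the paper leaves implicit.
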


\begin{proof}
By Proposition~\ref{p.hypobolicity} and Remark~\ref{r.gamma}, there is $a_0>0$ such that for any
point $x\in \TT^3$, there is a set $\Gamma_{x}\subset \xi^u(x)$ with $\xi^u_x(\Gamma_x)>a_0$ and
$\Gamma_x$ belongs to the complement of $U(\xi^u(x))$. We claim that
$$f^{-n_0}(\Gamma_{x_{11}})\times[0,\overline{t}_1]\subset P^\infty_1.$$

Because $\Gamma_{x_{11}}\subset \xi^u(x_{11})$, it does not fit the situation of case (a) of Remark~\ref{r.firstrun} where the points been ruled out in the first run. And by Remark~\ref{r.gamma} and the choice of $K$, the points in $\Gamma_{x_{11}}$ also do not fit the situation of case (c), thus we finish the proof of the claim.

By (a) of Proposition~\ref{p.hypobolicity}, $\nu^u_{x_{11}}(\Gamma_{x_{11}})>a_0$.
Then by Lemma~\ref{l.Jac} and Remark~\ref{r.lowboundary},
$$\nu^u_{x_1}(f^{-n_0}(\Gamma_{x_{11}}))=\nu^u_{x_{11}}(\Gamma_{x_{11}})\prod_{i=0}^{n_0-1} \nu^u_{f^i(x)}(f^{-1}\xi^u(f^{i+1}(x)))>a_0 a_1^{n_0}.$$
Moreover, Remark~\ref{r.lowboundary}, there are only finitely many values of $\hat{c}_j$ ($j=1,2$), so do $\overline{t}_j$, take the minimal
value denote by $t_0$. Let $a= a_0 a_1^{n_0} t_0$, the proof is complete.
\end{proof}

Now let us prove (II):

\begin{lemma}\label{l.measureppreserving}
$\tau\mid P_1^\infty$ is measure preserving.
\end{lemma}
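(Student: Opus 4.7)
\medskip
\noindent\textbf{Proof plan.} On $P_1^\infty$ the coupling map $\tau$ is, by construction, a skew product: $\tau(x,t) = (\phi(x), \psi(t))$, where the spatial factor $\phi = f^{-n_0}\circ \cH^{cs}_{x_{11}, x_{21}} \circ f^{n_0}$ composes forward iteration, center-stable holonomy, and backward iteration, while the vertical factor $\psi(t) = (\hat{c}_1/\hat{c}_2)\, t$ is an affine bijection of intervals. Since $m_j = \nu^u_{x_j} \times dt$ is a product measure and by Remark~\ref{r.firstrun} the sets $P_j^\infty$ have the product form $S_j \times [0, \overline{t}_j]$ for some $S_j \subset f^{-n_0}(\xi^u(x_{j1}))$, the lemma reduces to two independent tasks: (i) checking that $\phi$ bijectively identifies the base $S_1$ with the base $S_2$, and (ii) checking that the spatial and vertical Jacobians of $\tau$ exactly balance.

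For (ii) the plan is to chain together three elementary pushforwards. Iterating Lemma~\ref{l.Jac} $n_0$ times gives
$$(f^{n_0})_*\bigl(\nu^u_{x_1}|_{f^{-n_0}(\xi^u(x_{11}))}\bigr) = \hat{c}_1\, \nu^u_{x_{11}},$$
and likewise $(f^{-n_0})_*\,\nu^u_{x_{21}} = \hat{c}_2^{-1}\, \nu^u_{x_2}|_{f^{-n_0}(\xi^u(x_{21}))}$. Meanwhile Proposition~\ref{p.productstructure} states that $\cH^{cs}_{x_{11}, x_{21}}$ pushes $\nu^u_{x_{11}}$ to $\nu^u_{x_{21}}$ with Jacobian $1$. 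Composing the three maps yields $\phi_*(\nu^u_{x_1}|_{S_1}) = (\hat{c}_1/\hat{c}_2)\, \nu^u_{x_2}|_{S_2}$. On the vertical side the construction of $(\overline{t}_1, \overline{t}_2)$ was arranged precisely so that $\hat{c}_1\,\overline{t}_1 = \hat{c}_2\,\overline{t}_2$, whence $\psi$ is a linear bijection $[0,\overline{t}_1]\to [0,\overline{t}_2]$ with $\psi_*(dt|_{[0,\overline{t}_1]}) = (\hat{c}_2/\hat{c}_1)\, dt|_{[0,\overline{t}_2]}$. The two reciprocal factors $\hat{c}_1/\hat{c}_2$ and $\hat{c}_2/\hat{c}_1$ cancel, yielding $\tau_*(m_1|_{P_1^\infty}) = m_2|_{P_2^\infty}$.

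For (i) the identity $\phi(S_1)=S_2$ is pure bookkeeping. Throughout the recursive construction of $P_j^n$ the unstable plaques $\xi^u(x_{1lkn})$ and $\xi^u(x_{2lkn})$ are introduced in pairs matched by $\cH^{cs}_{x_{1k(n-1)},x_{2k(n-1)}}$, and the stopping rule on $\beta_{lkn}$ cuts both sides of a pair simultaneously; a short induction on $n$ shows that $\phi$ bijects the surviving points.

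I do not expect any serious obstacle: once one recognizes that $\tau$ is a skew product, the argument is a compatibility check between the spatial Jacobian along $\nu^u$ produced by $f^{\pm n_0}$ and the vertical rescaling built into the second coordinate of $\tau$. The only subtlety is tracking the direction of each pushforward so that the two factors of $\hat{c}_1/\hat{c}_2$ enter with opposite exponents and cancel; this is exactly what the definition of $\overline{t}_j$ was engineered for.
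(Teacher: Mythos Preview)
Your proposal is correct and follows essentially the same route as the paper: reduce to computing the Jacobian of the spatial factor $\phi=f^{-n_0}\circ\cH^{cs}_{x_{11},x_{21}}\circ f^{n_0}$ via Lemma~\ref{l.Jac} and the holonomy invariance from Proposition~\ref{p.productstructure}, and observe that the vertical rescaling by $\hat c_1/\hat c_2$ exactly cancels it. Your write-up is in fact more explicit than the paper's (which omits the bijectivity check $\phi(S_1)=S_2$ and the verification that $\psi$ carries $[0,\overline t_1]$ onto $[0,\overline t_2]$), but the underlying argument is identical.
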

\begin{proof}
One only need to show that the map
$$f^{-n_0}\circ \cH^{cs}_{x_{11},x_{21}}\circ f^{n_0}:  f^{-n_0}(\xi^u(x_{11}))\to f^{-n_0}(\xi^u(x_{21}))$$
has Jacobian
$$\frac{\hat{c}_2}{\hat{c}_1}=\frac{\nu^u_{x_2}(f^{-n_0}(\xi^u(x_{21})))}{\nu^u_{x_2}(f^{-n_0}(\xi^u(x_{11})))}.$$

This comes easily from Lemma~\ref{l.Jac} and
because $\nu^u_{\cdot}$ is preserved by the center-stable holonomy:
$$(\cH^{cs}_{x_{11},x_{21}})_*\nu^u_{x_{11}}=\nu^u_{x_{21}}.$$

\end{proof}

Moreover, (III) comes directly from the construction (see \eqref{eq.csdistance}),
if we take $\rho_1=K\vep e^{-\lambda/2}$.

\subsection{Coupling time\label{ss.coupling}}
Here we prove IV: (B) of Lemma~\ref{l.coupling}. As a summary of the above discussion, we have:

\begin{lemma}\label{l.couplingcontrol}

There are constants $q,C_0>0,\rho_0<1$ such that for any pair $Y_1,Y_2$
\begin{itemize}
\item[(H1)] $m_1(P_1^\infty) \geq a$;
\item[(H2)] $m_1(P^n_1)\leq C_0\rho_0^n$.
\end{itemize}
\end{lemma}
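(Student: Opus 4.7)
The plan is as follows. Statement (H1) is already established by Lemma~\ref{l.positivemeasure}, which gives $m_1(P_1^\infty) \ge a_0 a_1^{n_0} t_0$ with all constants on the right independent of the specific pair $(Y_1,Y_2)$; we simply take $a := a_0 a_1^{n_0} t_0$. Only (H2) requires new work.

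First, the case $n = n_0$ is handled by the trivial bound $m_1(P_1^{n_0}) \le m_1(Y_1) \le 1$, which fits into any exponential bound after adjusting the constant $C_0$. For $n > n_0$, recall from the first-run construction (Remark~\ref{r.firstrun}) that $P_1^n$ is a disjoint union of cylinders $f^{-n}(\xi^u(x_{1lkn})) \times [0,\overline{t}_1]$ over the ``bad'' index pairs $(l,k)$ satisfying
$$\beta_{lkn} = \bigl\|df^{n-n_0}\mid_{E^c}\bigr\|_{f^{-(n-n_0)}(\xi^u(x_{1lkn}))} > K e^{-\lambda(n-n_0)}.$$
Using Lemma~\ref{l.Jac} to factor the conditional measure through the intermediate plaque $\xi^u(x_{11})$ at time $n_0$, one obtains
$$m_1(P_1^n) = \overline{t}_1\,\hat{c}_1 \sum_{\text{bad }(l,k)} \nu^u_{x_{11}}\bigl(f^{-(n-n_0)}(\xi^u(x_{1lkn}))\bigr).$$

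The key estimate now comes from applying Corollary~\ref{c.largedeviation} to the continuous function $\varphi = \log\|df\mid_{E^c}\|$ on the plaque $\xi^u(x_{11})$ over time $n-n_0$. Since $E^c$ is one-dimensional, $\exp\bigl(s\cdot S_{n-n_0}\varphi(y)\bigr) = \|df^{n-n_0}\mid_{E^c}(y)\|^{s}$, so the corollary yields
$$\sum_{(l,k)} \nu^u_{x_{11}}\bigl(f^{-(n-n_0)}(\xi^u(x_{1lkn}))\bigr)\,\beta_{lkn}^{\,s} \le \theta_1^{\,n-n_0},$$
where the sum ranges over \emph{all} plaques of $f^{n-n_0}(\xi^u(x_{11}))$ and hence upper-bounds the restricted bad-index sum. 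Chebyshev's inequality then gives
$$\sum_{\text{bad }(l,k)} \nu^u_{x_{11}}\bigl(f^{-(n-n_0)}(\xi^u(x_{1lkn}))\bigr) \le \frac{\theta_1^{\,n-n_0}}{(K e^{-\lambda(n-n_0)})^{\,s}} = \frac{1}{K^{\,s}}\bigl(\theta_1 e^{\lambda s}\bigr)^{n-n_0}.$$
By the choice of $\lambda$ and $s$ fixed before the algorithm (namely $e^{-\lambda s} > \theta_1$), we have $\rho_0 := \theta_1 e^{\lambda s} < 1$, and after absorbing $\overline{t}_1\hat{c}_1 \le 1$ and the factor $\rho_0^{-n_0}/K^{\,s}$ into a constant $C_0$, estimate (H2) follows.

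The main obstacle to this argument is not analytic but bookkeeping: one must verify that all three constants $(a, C_0, \rho_0)$ depend only on $f$ and not on $(Y_1, Y_2)$. For $a$ this is the content of Lemma~\ref{l.positivemeasure}; for $C_0$ and $\rho_0$ it follows from the fact that $K$, $\lambda$, $s$, $\theta_1$ were fixed universally at the start of Section~\ref{s.coupling} before any pair of rectangles was specified. A minor point to justify is that the sum produced by Corollary~\ref{c.largedeviation} ranges over every plaque of $f^{n-n_0}(\xi^u(x_{11}))$, not just those that survive all previous first-run stopping criteria; this direction is the correct one for an upper bound, so no restriction is needed.
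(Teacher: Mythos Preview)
Your proof is correct and follows exactly the same approach as the paper: (H1) is quoted from Lemma~\ref{l.positivemeasure}, and (H2) is obtained by applying Corollary~\ref{c.largedeviation} to $\phi=\log\|df\mid_{E^c}\|$ (this is precisely the paper's equation~\eqref{eq.exponentialtail}) and then using a Chebyshev-type bound with the parameter choice $e^{-\lambda s}>\theta_1$ fixed at the start of Section~\ref{s.coupling}. The paper compresses all of this into two sentences; you have simply unpacked the computation, including the factoring of $m_1(P_1^n)$ through $\xi^u(x_{11})$ via Lemma~\ref{l.Jac} and the observation that enlarging the bad-index sum to all plaques only helps the upper bound.
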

\begin{proof}
(H1) is exactly Lemma~\ref{l.positivemeasure}.

To prove (H2), observe that for $n>n_0$, the only reason for $(y,t)$ to belong to $P_1^n$
is that $\max\limits_{z \in f^{-(n-n_0)}(\xi^u(f^n(y)))}|df^{n-n_0}\mid_{E^c}(z)|>Ke^{-\lambda n}$.
So by the choice of $\lambda$, the measure of such points is exponentially small by
Corollary~\ref{c.largedeviation} and \eqref{eq.exponentialtail}.

\end{proof}

Now represent $R(y)=\sum_{j=1}^{k(y)}s_j(y)$, where $s_j(x)$ is the stopping time of the $j$th run of our algorithm.
And $T_k$ be the set where $\tau$ is not defined after $k$ runs of our algorithm and $U_k=T_{k-1}\setminus T_k$.
Then $m_1(R=n)=\sum_{i\leq \text{[}\delta n\text{]}}m_1(U_i)+\sum_{i>\text{[}\delta n\text{]}}m_1(U_i)=I+II$ for some $\delta$ small

Then $II<(1-a)^{\delta n}$ by (H1) of Lemma~\ref{l.couplingcontrol}. To estimate the first term, we only need to consider
\begin{equation}
\begin{aligned}
m_1(\{R=n\}\cap U_i)&=\sum_{(k_1,\dots,k_i); \sum k_j=n}m_1(\{s_i=k_i\})\\
&\leq \sum_{(k_1,\dots,k_i);\sum k_j=n}(\prod_{j=1}^i C_0 \rho_0^{k_j})\\
&\leq \left(
        \begin{array}{c}
          n \\
          i \\
        \end{array}
      \right)
C_0^i \rho_0^n.\\
\end{aligned}
\end{equation}
Because $\left(
        \begin{array}{c}
          n \\
          \text{[}n\delta\text{]} \\
        \end{array}
      \right)\approx e^{\vep n}$
for some $\vep=\vep(\delta)$ where $\vep(\delta)\to 0$ as $\delta\to 0$.
Choosing $\delta$ small enough such that
$$e^{\vep(\delta)}C_0^\delta \rho_0 = \rho^{\prime}<1.$$
Then the first item is $\leq \text{[}n\delta\text{]} \rho^{\prime n}$.

We complete the proof of (B) in Lemma~\ref{l.coupling}.

\end{document}